\documentclass[11pt]{amsart}
\usepackage[latin1]{inputenc}
\usepackage{amsmath,amsthm}
\usepackage{amssymb,amsfonts}
\usepackage{hyperref}
\usepackage{bbm}

\usepackage{nameref}

\usepackage{tikz}
\usepackage{tikz-qtree}
\usepackage{prftree}

\usepackage[shortlabels]{enumitem}
\usepackage{bm}
\usepackage{cite}
\usepackage{graphicx}
\usepackage{verbatim}
\usepackage{setspace}
\usepackage{color}

\hyphenation{pro-ce-di-men-to}

\usepackage[a4paper]{geometry}

\geometry{left=3cm,right=3cm,top=2.5cm} 

\usepackage{stmaryrd}


\usepackage[mode=multiuser,status=draft,lang=english]{fixme}
\fxsetup{theme=colorsig}

\FXRegisterAuthor{M}{aM}{Matteo}
\FXRegisterAuthor{J}{aJ}{Juan}



\makeatletter
\let\orgdescriptionlabel\descriptionlabel
\renewcommand*{\descriptionlabel}[1]{%
  \let\orglabel\label
  \let\label\@gobble
  \phantomsection
  \edef\@currentlabel{#1}%
  \let\label\orglabel
  \orgdescriptionlabel{#1}%
}
\makeatother


\title[]{Boolean valued semantics for infinitary logics}
\author{Juan M. Santiago Su\'arez \& Matteo Viale}

\thanks{
The first author acknowledges support from INDAM through GNSAGA and from the project:
\emph{PRIN 2017-2017NWTM8R
Mathematical Logic: models, sets, computability.}
\textbf{MSC:} \emph{03C75, 03E40.} \textbf{Keywords:} \emph{Infinitary Logics, Forcing, Consistency Properties.} 
We thank Boban Velickovic (who outlined us the relevance of consistency properties in the analysis of forcing), and Ben de Bondt (for many useful comments).}


\theoremstyle{plain}

	\newtheorem{theorem}{Theorem}[section]
	\newtheorem{proposition}[theorem]{Proposition}
	\newtheorem{lemma}[theorem]{Lemma}
	\newtheorem{corollary}[theorem]{Corollary}
	\newtheorem{fact}[theorem]{Fact}
	
	\newtheorem{conjecture}[theorem]{Conjecture}
	\newtheorem{question}[theorem]{Question}
	\newtheorem{claim}{Claim}
	
\theoremstyle{definition}

	\newtheorem{definition}[theorem]{Definition}
	\newtheorem{notation}[theorem]{Notation}

\theoremstyle{remark}
	\newtheorem{remark}[theorem]{Remark}

\newcommand{\ZFC}{\ensuremath{\mathsf{ZFC}}}

\DeclareMathOperator{\Coll}{Coll}

\DeclareMathOperator{\RO}{RO}


\newcommand{\bool}[1]{\mathsf{#1}}


\newcommand{\Reg}[1]{\text{Reg}\left(#1\right)}

\newcommand{\Qp}[1]{\left\llbracket #1 \right\rrbracket}
\newcommand{\ap}[1]{\langle #1 \rangle}
\newcommand{\bp}[1]{\left\lbrace #1 \right\rbrace}

\newcommand{\rao} {\rightarrow}
\newcommand{\lrao} {\leftrightarrow}

\newcommand{\Rao} {\Rightarrow}



\newcommand{\MM}{\ensuremath{\text{{\sf MM}}}}


\begin{document}

\begin{abstract}
It is well known that the completeness theorem for $\mathrm{L}_{\omega_1\omega}$ fails with respect to Tarski semantics. Mansfield showed that it holds for $\mathrm{L}_{\infty\infty}$  if one replaces Tarski semantics with Boolean valued semantics. We use forcing to improve his result in order to obtain a stronger form of Boolean completeness (but only for $\mathrm{L}_{\infty\omega}$). Leveraging on our completeness result, we establish the Craig interpolation property and a strong version of the omitting types theorem for $\mathrm{L}_{\infty\omega}$ with respect to Boolean valued semantics.
We also show that a weak version of these results holds for $\mathrm{L}_{\infty\infty}$ (if one leverages instead on Mansfield's completeness theorem).
Furthermore we bring to light (or in some cases just revive) several connections between the infinitary logic $\mathrm{L}_{\infty\omega}$ and the forcing method in set theory.
%
\end{abstract}

\maketitle


\tableofcontents



\section{Introduction}\label{Section0}

This paper revives and brings to light several connections existing between infinitary logics and forcing.
The main objective of the paper is to show that boolean valued semantics is a right semantics for infinitary logics, more precisely: the class of boolean valued models with the mixing property (e.g. sheaves on compact extremally disconnected spaces by the results of \cite{PIEVIA19}) provides a complete semantics for $\mathrm{L}_{\infty\omega}$ with respect to the natural sequent calculus for infinitary logics (obtained by trivially adapting to this logic the  inference rules of Gentzen's sequent calculus for first order logic, see Section \ref{subsec:gentzencalc} below)\footnote{We note that Mansfield \cite{MansfieldConPro} proves that the larger class of boolean valued models  (e.g. presheaves on compact extremally disconnected spaces by the results of \cite{PIEVIA19}) gives a complete semantics for the logic $\mathrm{L}_{\infty\infty}$. Our completeness result is weaker than Mansfield's (as it applies only to $\mathrm{L}_{\infty\omega}$) but also stronger than his (as it provides completeness with respect to a much better behaved class of models, e.g. sheaves instead of presheaves on compact extremally disconnected topological spaces).}. Leveraging on our completeness result we are able to prove the natural form of Craig's interpolation theorem for our deductive system for $\mathrm{L}_{\infty \omega}$,
as well as a natural generalization to 
$\mathrm{L}_{\infty\omega}$ with respect to boolean valued semantics of the standard omitting types theorems which can be proved for first order logic with respect to Tarski semantics. We are also able to prove weaker forms of these results for $\mathrm{L}_{\infty\infty}$, in this latter case appealing to a completeness result of Mansfield.

A central role in our analysis of $\mathrm{L}_{\infty\omega}$ is played by the notion of consistency property.
Roughly a consistency property for a signature $\tau$ is a partial order whose elements are consistent families of infinitary $\tau$-formulae ordered by reverse inclusion. The clauses for being a consistency property in signature $\tau$ grant that a generic filter for such a forcing notion produces a maximal set of consistent $\tau$-formulae, which then can be turned into a Tarski $\tau$-structure (a term model) realizing each of them. However generic filters do not exist in  the standard universe of set theory $V$, hence such Tarski $\tau$-structures do not exist in $V$ as well, but just in a generic extension of $V$; on the other hand their semantics can be instead described in $V$ by means of boolean valued models, e.g. forcing.

Keeping in mind this idea we can show that: 
\begin{itemize}
\item
any forcing notion is forcing equivalent to a consistency property for $\mathrm{L}_{\infty\omega}$;
\item
every consistency property defines an ``elementary class'' of boolean valued models for $\mathrm{L}_{\infty\omega}$ and conversely;
\item 
most of the standard results for first order logic transfer to infinitary logic if we replace Tarski semantics with boolean valued semantics; e.g. in this paper we show that this is the case for the completeness theorem, Craig's interpolation, Beth definability, the omitting types theorem (on the other hand we can show that compactness fails for boolean valued  semantics also for $\mathrm{L}_{\infty\omega}$).
\end{itemize}

Some caveats and further comments are in order.
\begin{itemize}
\item
Sections \ref{ForConPro} and \ref{sec:for=conprop} require a basic familiarity with the forcing method (at the level of Kunen's book \cite{KUNEN}). The rest of the paper can be read by people with a loose or null knowledge of the forcing method. 
\item
Most of our results generalize to 
$\mathrm{L}_{\infty\omega}$ (and in some cases also to $\mathrm{L}_{\infty\infty}$) with respect to boolean valued semantics, results and proofs that Keisler obtains for 
$\mathrm{L}_{\omega_1\omega}$ with respect to Tarski semantics \cite{KeislerInfLog}. 
Roughly Keisler's proofs are divided in two parts: the first designs a suitable countable consistency property associated to a given countable $\mathrm{L}_{\omega_1\omega}$-theory $T$ of interest; the second
appeals to Baire's category theorem taking advantage of the considerations to follow.

 Consistency properties are designed in order that the Tarski structure induced by a maximal filter $F$ on them (seen as partial orders) realizes a certain formula $\phi$ if and only if $F$ meets a dense set  $D_\phi$ associated to $\phi$. 
If one focuses on countable theories $T$ for $\mathrm{L}_{\omega_1\omega}$, one can appeal to Baire's category theorem to find a maximal filter $F$ for the associated consistency property: $F$ meets the countable family of dense sets associated to the formulae in $T$.
This is what Keisler's proofs usually do.
  
However, if one considers an arbitrary $\mathrm{L}_{\infty\omega}$-theory $T$, one could drop the use of Baire's category theorem and replace it by describing (using forcing) as a boolean valued model the Tarski structure that Keisler's method would produce in a forcing extension where $T$ becomes a countable $\mathrm{L}_{\omega_1\omega}$-theory.
This is what we will do here.
\item
One has to pay attention to our formulation of Craig's interpolation property (e.g. Thm. \ref{thm:craigint}). We prove our result with respect to the natural deduction calculus for $\mathrm{L}_{\infty\omega}$; for this calculus it is known that the completeness theorem with respect to Tarski semantics fails (we give an explicit counterexample in Fact \ref{fac:tarskiinc}). It is no surprise hence that the semantic version of Craig's interpolation fails as well with respect to Tarski semantics (see \cite[Thm. 3.2.4]{MalitzThesis}).
On the other hand Malitz has proved an interpolation theorem for $\mathrm{L}_{\infty\infty}$ with respect to Tarski semantics using another deductive system for $\mathrm{L}_{\infty\omega}$ (introduced by Karp) which is complete for Tarski semantics \cite{MalitzThesis}. However we believe that our deductive system is better than Malitz's, since the notion of proof for our system is independent of the model of set theory we work with; for example our deductive system when restricted to $\mathrm{L}_{\infty\omega}$ is forcing invariant. Even more, for sets $\Gamma$, $\Delta$ of $\mathrm{L}_{\infty\omega}$-formulae in $V$, $\Gamma$ proves $\Delta$ is a provably $\Delta_1$-property in the parameters $\Gamma, \Delta$ in any model of $\ZFC$ to which $\Gamma$ and $\Delta$ belong: note that the existence of a proof is expressible by a $\Sigma_1$-statement while being true in any boolean valued model is expressible by a $\Pi_1$-statement (according to the Levy hierarchy as in \cite[Pag. 183]{JECHST}). In particular, $\Gamma$ proves $\Delta$ holds in $V$ according to our deductive system if and only if it holds in any (equivalently some) forcing extension of $V$. This fails badly for Malitz's deductive system, e.g. there is a sentence $\phi$ such that ``$\phi$ is valid according to Malitz's deductive system'' holds in some generic extension of $V$, but fails in $V$ and conversely.
\item
The fact that forcing and consistency properties are closely related concepts is implicit in the work 
of many; for 
example we believe this is behind Jensen's development of $\mathrm{L}$-forcing \cite{JENLFORC} and the spectacular proof by Asper\'o and Schindler that $\MM^{++}$ implies Woodin's axiom
 $(*)$ \cite{ASPSCH(*)} (see also \cite{viale2021proof} -which gives a presentation of their proof more in line with the spirit of this paper); it also seems clear that Keisler is to a large extent aware of this equivalence in his book on infinitary logics \cite{KeislerInfLog}, as well as Mansfield in his paper proving the completeness theorem for $\mathrm{L}_{\infty\infty}$ using boolean valued semantics \cite{MansfieldConPro}. On the other hand we have not been able to find anywhere an explicit statement that every complete boolean algebra is the boolean completion of a consistency property (e.g. Thm. \ref{thm:equivforcconsprop}) even if the proof of this theorem is rather trivial once the right definitions are given.
\item
While some of the results we present in this paper were known at least to some extent (e.g. the completeness theorem via boolean valued semantics for $\mathrm{L}_{\infty\infty}$ --- see Mansfield \cite{MansfieldConPro} and independently Karp \cite{Karp}), we believe that this paper gives a unified presentation of the sparse number of theorems connecting infinitary logics to boolean valued semantics we have been able to trace in the literature. Furthermore, we add to the known results some original contributions, e.g. Craig's interpolation property, Beth's definability property, the omitting types theorem, the equivalence of forcing with consistency properties, the completeness theorem for $\mathrm{L}_{\infty\omega}$ with respect to to the semantics produced by sheaves on compact extremally disconnected spaces.
\end{itemize}

The paper is organized as follows:
\begin{itemize}
\item \ref{sec:inflog} introduces the basic definitions for the infinitary logics $\mathrm{L}_{\kappa\lambda}$, including their boolean valued semantics and a Gentzen's style proof system for them.
\item \ref{sec:mainmodthres} states the main model theoretic results we obtain for $\mathrm{L}_{\infty\omega}$ and $\mathrm{L}_{\infty\infty}$.
\item \ref{sec:consprop} introduces the key notion of consistency property on which we leverage to prove all the main results of the paper.
\item \ref{ForConPro} shows that we can use consistency properties to produce boolean valued models with the mixing property (e.g. sheaves on extremally disconnected compact spaces) for any consistent $\mathrm{L}_{\infty\omega}$ theory.
\item \ref{sec:mansmodexthm} gives a proof rephrased in our terminology of the main technical result of Mansfield on this topic, e.g. that any consistency property for $\mathrm{L}_{\infty\infty}$ gives rise to a corresponding 
boolean valued model (which however may not satisfy the mixing property). 
\item \ref{sec:proofmodthres} leverages on \ref{ForConPro} and \ref{sec:mansmodexthm} to prove the theorems stated in 
\ref{sec:mainmodthres}.
\item \ref{sec:for=conprop} shows that any forcing notion can be presented as the boolean completion of a consistency property for $\mathrm{L}_{\infty\omega}$.
\item The Appendix \ref{sec:app} collects some counterexamples to properties which do not transfer from first order logic to infinitary logics (for example the failure of boolean compactness), as well as the proof of some basic facts regarding boolean valued models.
\item We close the paper with a brief list of open problems and comments.
\end{itemize}

\section{The infinitary logics $\mathrm{L}_{\kappa \lambda}$}\label{sec:inflog}

The set of formulae for a language in first order logic is constructed by induction from atomic formulae by taking negations, finite conjunctions and finite quantifications. 
$\mathrm{L}_{\kappa \lambda}$ generalizes both ``finites" to cardinals $\kappa$ and $\lambda$ allowing disjunctions and conjunctions of size less than $\kappa$ and simultaneous universal quantification of a string of variables of size less than $\lambda$.
Our basic references on this topic is V{\"a}{\"a}n{\"a}nen's book \cite{ModelsGames}.
To simplify slightly our notation we confine our attention to relational languages, i.e.  languages that do not have function symbols\footnote{With some notational efforts which we do not spell out all our results transfer easily to arbitrary signatures}.
Also, when interested in logics with quantification of infinite strings we consider natural to include signatures containing relation symbols of infinite arity. 

\subsection{Syntax}
\begin{definition}
$\mathrm{L}$ is a relational $\lambda$-signature if it contains only relation symbols of arity less than $\lambda$ and eventually constant symbols; relational $\omega$-signatures are first order signatures without function symbols.

Fix two cardinals $\lambda, \kappa$, a set of $\kappa$ variables, $\{v_\alpha : \alpha < \kappa\}$, and consider a relational $\lambda$-signature $\mathrm{L}$. The set of terms and atomic formulae for $\mathrm{L}_{\kappa \lambda}$ is constructed in analogy to first order logic using the symbols of $\mathrm{L}	\cup\{v_\alpha : \alpha < \kappa\}$. The other $\mathrm{L}_{\kappa \lambda}$-formulae are defined by induction as follows:

\begin{itemize}
\item if $\phi$ is a $\mathrm{L}_{\kappa \lambda}$-formula, then so is $\neg \phi$;
\item if $\Phi$ is a set of $\mathrm{L}_{\kappa \lambda}$-formulae of size $< \kappa$ with free variables in the set $V=\bp{v_i:i\in I}$ for some $I\in [\kappa]^{<\lambda}$, then so are $\bigwedge \Phi$ and $\bigvee\Phi$;
\item if $V=\bp{v_i:i\in I}$ for some $I\in [\kappa]^{<\lambda}$ and $\phi$ is a $\mathrm{L}_{\kappa \lambda}$-formula, then so are $\forall V \phi$ and $\exists V \phi$.
\end{itemize}
We let $\mathrm{L}_{\infty \lambda}$ be the family of $\mathrm{L}_{\kappa \lambda}$-formulae for some $\kappa$, and $\mathrm{L}_{\infty \infty}$ be the family of  $\mathrm{L}_{\kappa \lambda}$-formulae for some $\kappa,\lambda$.
\end{definition}  

The restriction on the number of free variables for the clauses $\bigwedge$ and $\bigvee$ is intended to avoid formulae for which there is no quantifier closure. Another common possibility is to call pre-formula any ``formula", and formula the ones that verify this property. 

\subsection{Boolean valued semantics}

Let us recall the following basic facts about partial orders and their Boolean completions:

\begin{definition}
Given a Boolean algebra $\bool{B}$ and a partial order $\mathbb{P} = (P,\leq)$:
\begin{itemize}
\item
$\bool{B}^+$ denotes the partial order given by its positive elements 
and ordered by $a\leq_{\bool{B}} b$ if $a\wedge b=a$.  
\item
$\bool{B}$ is $<\lambda$-complete if any subset of $\bool{B}$ of size less than $\lambda$
has an infimum and a supremum according to  $\leq_{\bool{B}}$.
\item 
A set $G \subset P$ is a prefilter if for any $a_1,\ldots,a_n \in G$ we can find $b \in G$, $b \leq a_1,\ldots,a_n$.
\item 
A set $F \subset P$ is a filter if it is a prefilter and is upward close:
\[
(a \in F \wedge a \leq b) \Rightarrow b \in F.
\]

\end{itemize}
\end{definition}

\begin{remark} \label{rema1}
Given a partial order $\mathbb{P} = (P,\leq)$: 
\begin{itemize}
\item The order topology on $P$ is the one whose open sets are given by the downward closed subsets of $P$; the sets $N_p = \bp{q\in P: q\leq p}$ form a basis for this topology.
\item $\RO(P)$ is the complete Boolean algebra given by the regular open sets of the order topology on $P$.
\item The map $p\mapsto \Reg{N_p}$ defines an order and incompatibility preserving map of $P$ into a dense subset of $(\RO(P)^+,\subseteq)$; hence $(P,\leq)$ and $(\RO(P)^+,\subseteq)$ are equivalent forcing notions.
\end{itemize}

If $\bool{B}$ is a Boolean algebra, $\bool{B}^+$ sits inside its Boolean completion  $\RO(\bool{B}^+)$
as a dense subset via the map $b\mapsto N_b$ (e.g. for all $A\in\RO(\bool{B}^+)$ there is $b\in \bool{B}$ such that $N_b\subseteq A$). 

From now on we identify $\bool{B}$ with its image in $\RO(\bool{B}^+)$ via the above map.
\end{remark}

\begin{definition} Let $\mathrm{L}$ be a relational $\lambda$-signature and $\mathsf{B}$ a $<\lambda$-complete Boolean algebra. A $\mathsf{B}$-valued model $\mathcal{M}$ for $\mathrm{L}$ is given by:
\begin{enumerate}
\item a non-empty set $M$;
\item the Boolean value of equality,
\begin{align*}
M^2 &\rao \mathsf{B} \\
(\tau,\sigma) &\mapsto \Qp{\tau=\sigma}^{\mathcal{M}}_\mathsf{B};
\end{align*} 
\item the interpretation of relation symbols $R \in \mathrm{L}$ of arity $\alpha<\lambda$ by maps
\begin{align*}
M^\alpha &\rao \mathsf{B} \\
(\tau_i:i\in \alpha) &\mapsto \Qp{R (\tau_i:i\in \alpha) }^{\mathcal{M}}_\mathsf{B};
\end{align*}
\item the interpretation $c^\mathcal{M} \in M$ of constant symbols $c$ in $\mathrm{L}$.
\end{enumerate}

We require that the following conditions hold:
\begin{enumerate}[(A)]
\item For all $\tau,\sigma,\pi \in M$,
\begin{gather*}
\Qp{\tau=\tau}^{\mathcal{M}}_\mathsf{B} = 1_\mathsf{B}, \\
\Qp{\tau=\sigma}^{\mathcal{M}}_\mathsf{B} = \Qp{\sigma=\tau}^{\mathcal{M}}_\mathsf{B}, \\
\Qp{\tau=\sigma}^{\mathcal{M}}_\mathsf{B} \wedge \Qp{\sigma=\pi}^{\mathcal{M}}_\mathsf{B} \leq \Qp{\tau=\pi}_\mathsf{B}^{\mathcal{M}}.
\end{gather*}
\item \label{eqn:subslambda}
If $R \in \mathrm{L}$ is an $\alpha$-ary relation symbol, for all $(\tau_i:\,i<\alpha), (\sigma_i:\,i<\alpha) \in M^\alpha$,
\begin{equation*}
\bigg(\bigwedge_{i\in\alpha}\Qp{\tau_i=\sigma_i}^{\mathcal{M}}_\mathsf{B} \bigg) \wedge \Qp{R(\tau_i:\,i<\alpha)}^{\mathcal{M}}_\mathsf{B} \leq \Qp{R(\sigma_i:\,i<\alpha)}^{\mathcal{M}}_\mathsf{B}.
\end{equation*}
\end{enumerate}
\end{definition}



\begin{definition}\label{def:boolvalsem} 
Fix $\mathsf{B}$ a $<\lambda$-complete Boolean algebra and $\mathcal{M}$ a $\mathsf{B}$-valued structure for a relational $\lambda$-signature $\mathrm{L}$. We define the $\RO(\mathsf{B}^+)$-value of an $\mathrm{L}_{\infty\infty}$-formula $\phi(\overline{v})$ with assignment $\overline{v} \mapsto \overline{m}$ by induction as follows:

\begin{gather*}
\Qp{R(t_i:i\in\alpha)[\overline{v} \mapsto \overline{m}]}^\mathcal{M}_{\RO(\bool{B}^+)} = \Qp{R(t_i[\overline{v} \mapsto \overline{m}]:i\in \alpha)}^\mathcal{M}_\mathsf{B} \text{ for $R\in\mathrm{L}$ of arity $\alpha<\lambda$},\\
\Qp{(\neg \phi)[\overline{v} \mapsto \overline{m}]}^\mathcal{M}_{\RO(\bool{B}^+)}  = \neg \Qp{\phi[\overline{v} \mapsto \overline{m}]}^\mathcal{M}_{\RO(\bool{B}^+)} ,\\
\Qp{(\bigwedge \Phi)[\overline{v} \mapsto \overline{m}]}^\mathcal{M}_{\RO(\bool{B}^+)}  = \bigwedge_{\phi \in \Phi} \Qp{\phi[\overline{v} \mapsto \overline{m}]}^\mathcal{M}_{\RO(\bool{B}^+)} ,\\
\Qp{(\bigvee \Phi)[\overline{v} \mapsto \overline{m}]}^\mathcal{M}_{\RO(\bool{B}^+)}  = \bigvee_{\phi \in \Phi} \Qp{\phi[\overline{v} \mapsto \overline{m}]}^\mathcal{M}_{\RO(\bool{B}^+)} ,\\
\Qp{(\forall V \phi)[\overline{v} \mapsto \overline{m}]}^\mathcal{M}_{\RO(\bool{B}^+)}  = \bigwedge_{\overline{a} \in M^V} \Qp{\phi[\overline{v} \mapsto \overline{m}, V \mapsto \overline{a}]}^\mathcal{M}_{\RO(\bool{B}^+)} ,\\ 
\Qp{(\exists V \phi)[\overline{v} \mapsto \overline{m}]}^\mathcal{M}_{\RO(\bool{B}^+)}  = \bigvee_{\overline{a} \in M^V} \Qp{\phi[\overline{v} \mapsto \overline{m}, V \mapsto \overline{a}]}^\mathcal{M}_{\RO(\bool{B}^+)}.
\end{gather*}

A $\bool{B}$-valued model is well behaved\footnote{We believe this is the right generalization that should become standard in future papers.} for $\mathrm{L}_{\kappa\lambda}$ 
if $\Qp{\phi(t_i:i\in\alpha)[\overline{v} \mapsto \overline{m}]}^\mathcal{M}_{\RO(\bool{B}^+)}\in\bool{B}$ for  any $\mathrm{L}_{\kappa\lambda}$ formula $\phi(\overline{v})$.

Let $T$ be an $\mathrm{L}_{\infty \infty}$ theory and $\mathcal{M}$ be a well behaved $\bool{B}$-valued $\mathrm{L}$-structure. The relation 
\[
\mathcal{M} \vDash T
\]
holds if 
\[
\Qp{\bigwedge T}_\bool{B}^\mathcal{M} = 1_\bool{B}.
\]
\end{definition}

Note that if $\bool{B}$ is complete any $\bool{B}$-valued model is well behaved.
We feel free to write just $\Qp{\phi(\tau_i:\,i<\alpha)}$ or $\Qp{\phi(\tau_i:\,i<\alpha)}^{\mathcal{M}}$ or $\Qp{\phi(\tau_i:\,i<\alpha)}_\mathsf{B}$ when no confusion arises on which structure we are considering or in which Boolean algebra we are evaluating the predicate $R$.

A key (but not immediately transparent) observation is that for any $\lambda$-signature $\mathrm{L}$, any  
well behaved $\bool{B}$-valued model $\mathcal{M}$ for $\mathrm{L}$ satisfies \ref{eqn:subslambda} with $R$ replaced by any $\mathrm{L}_{\infty\infty}$-formula. More precisely the following holds:
\begin{fact} \label{fac:pressubslambdaanyform}
Let $\mathrm{L}$ be a $\lambda$-relational signature and $\bool{B}$ a $<\lambda$-complete Boolean algebra.
Then for any $\bool{B}$-valued model $\mathcal{M}$ for $\mathrm{L}$,
any $\mathrm{L}_{\infty\infty}$-formula $\phi(x_i:i<\alpha)$ in displayed free variables, and any sequence $(\sigma_i:i<\alpha)$, $(\tau_i:i<\alpha)$ in
$\mathcal{M}^\alpha$
\begin{equation}\label{eqn:subslambda1}
\bigg(\bigwedge_{i\in\alpha}\Qp{\tau_i=\sigma_i}^\mathcal{M}_{\RO(\mathsf{B})^+} \bigg) \wedge \Qp{\phi(\tau_i:\,i<\alpha)}^\mathcal{M}_{\RO(\mathsf{B})^+}  \leq \Qp{\phi(\sigma_i:\,i<\alpha)}^\mathcal{M}_{\RO(\mathsf{B})^+}.
\end{equation}
\end{fact}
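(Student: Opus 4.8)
The plan is to argue by induction on the complexity of the $\mathrm{L}_{\infty\infty}$-formula $\phi$, writing $e=\bigwedge_{i\in\alpha}\Qp{\tau_i=\sigma_i}^\mathcal{M}_{\RO(\bool{B}^+)}$ for the ``defect of equality'' term. The single observation that makes the induction run smoothly is that, by the symmetry clause of axiom (A), $e$ is unchanged if we interchange the roles of $(\tau_i)$ and $(\sigma_i)$; hence whatever we prove for the pair $((\tau_i),(\sigma_i))$ we automatically obtain for $((\sigma_i),(\tau_i))$ with the \emph{same} bound $e$. This symmetry is exactly what is needed to push the inequality through negations.

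For the base cases I would treat the two kinds of atomic formulae separately. For an equality atom $x_j=x_k$ the desired inequality $e\wedge\Qp{\tau_j=\tau_k}\leq\Qp{\sigma_j=\sigma_k}$ is a double application of the transitivity clause of (A) together with symmetry: first $\Qp{\sigma_j=\tau_j}\wedge\Qp{\tau_j=\tau_k}\leq\Qp{\sigma_j=\tau_k}$, then $\Qp{\sigma_j=\tau_k}\wedge\Qp{\tau_k=\sigma_k}\leq\Qp{\sigma_j=\sigma_k}$. For a relational atom $R(x_i:i<\alpha)$ the statement is \emph{literally} the axiom \ref{eqn:subslambda}, with one caveat: (B) is an inequality between elements of $\bool{B}$ involving the meet $\bigwedge_{i<\alpha}\Qp{\tau_i=\sigma_i}$ computed in $\bool{B}$, whereas \ref{eqn:subslambda1} asks for the meet computed in $\RO(\bool{B}^+)$. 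Here I would use that $\bool{B}$ is $<\lambda$-complete and $\alpha<\lambda$, so this meet already exists in $\bool{B}$, and that the completion map $\bool{B}\hookrightarrow\RO(\bool{B}^+)$ preserves existing infima; thus the two meets coincide and (B) transfers verbatim.

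The inductive steps are then routine manipulations in the complete Boolean algebra $\RO(\bool{B}^+)$. For $\bigwedge\Phi$ one applies the inductive hypothesis to each $\psi\in\Phi$ to get $e\wedge\Qp{\psi(\tau)}\leq\Qp{\psi(\sigma)}$ and takes the meet over $\psi$; dually for $\bigvee\Phi$ one takes joins. The quantifier cases $\forall V\phi$ and $\exists V\phi$ are handled the same way, ranging the meet (resp. join) over assignments $\overline{a}\in M^V$: for a fixed $\overline{a}$ the bound variables receive the \emph{same} value on both sides, so their contribution to the defect term is $\bigwedge_{j}\Qp{a_j=a_j}=1_\bool{B}$, and the inductive hypothesis for $\phi$, taken with the enlarged list of displayed free variables, gives $e\wedge\Qp{\phi(\tau,\overline{a})}\leq\Qp{\phi(\sigma,\overline{a})}$. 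The negation case $\neg\psi$ is the only one that is not purely monotone: applying the inductive hypothesis to $\psi$ with $(\sigma_i)$ and $(\tau_i)$ interchanged yields $e\wedge\Qp{\psi(\sigma)}\leq\Qp{\psi(\tau)}$, and a contrapositive in the Boolean algebra converts this into $e\wedge\neg\Qp{\psi(\tau)}\leq\neg\Qp{\psi(\sigma)}$, as required.

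I expect the only genuinely delicate points to be the two already flagged: the bookkeeping of free versus bound variables in the quantifier step, namely ensuring the inductive hypothesis is invoked for $\phi$ with the correct, possibly larger, tuple of displayed variables; and the base-case check that the $<\lambda$-meet defining the defect term agrees whether computed in $\bool{B}$ or in $\RO(\bool{B}^+)$. Both the negation step and this meet-preservation step hinge on features built into the definitions, namely the symmetry axiom (A) and the $<\lambda$-completeness of $\bool{B}$, so once these are isolated the argument reduces to a short structural induction.
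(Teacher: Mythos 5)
Your proof is correct and follows essentially the same route as the paper's: a structural induction in which the disjunction and existential cases distribute the defect term $e$ over the relevant joins, and the negation case is handled by exploiting the symmetry of $e$ in $(\tau_i)$ and $(\sigma_i)$ (the paper packages this as the equality $e\wedge\Qp{\psi(\vec\tau)}=e\wedge\Qp{\psi(\vec\sigma)}$, which is equivalent to your contrapositive step). Your extra care in the base case --- splitting off equality atoms and checking that the $<\lambda$-meet in axiom (B) is preserved by the embedding $\bool{B}\hookrightarrow\RO(\bool{B}^+)$ --- is a point the paper elides but does not change the argument.
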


We prove this in Section \ref{subsec:mixfull}. 

\begin{definition} Let $\bool{B}$ be a complete Boolean algebra and
$\mathcal{M}$ a well behaved $\mathsf{B}$-valued model for some $\lambda$-signature 
$\mathrm{L}$. 
$\mathcal{M}$ has the mixing property if for any antichain $A \subset \mathsf{B}$ and $\{\tau_a : a \in A\} \subset M$ there is some $\tau \in M$ such that $a \leq \Qp{\tau=\tau_a}_\mathsf{B}$ for all $a \in A$.
\end{definition}

\begin{definition} 
Let $\lambda \leq \kappa$ be infinite cardinals,
$\bool{B}$ be a $<\lambda$-complete Boolean algebra, and
$\mathcal{M}$ be a well behaved $\mathsf{B}$-valued model for $\mathrm{L}_{\kappa\lambda}$. 

$\mathcal{M}$ is full for the logic $\mathrm{L}_{\kappa \lambda}$ if for every $\mathrm{L}_{\kappa,\lambda}$-formula $\phi(\overline{v},\overline{w})$ and $\overline{m} \in M^{\overline{w}}$ there exists $\overline{n} \in M^{\overline{v}}$ such that
\[ 
\Qp{\exists \overline{v} \phi(\overline{v},\overline{m})}_\mathsf{B} = \Qp{\phi(\overline{n},\overline{m})}_\mathsf{B}.
\]
\end{definition}

\begin{proposition}\label{prop:mixfull}
Let $\mathrm{L}$ be a $\lambda$-relational signature and $\bool{B}$ a complete Boolean algebra.
Any $\mathsf{B}$-valued model for $\mathrm{L}$ with the mixing property is full for 
$\mathrm{L}_{\infty \infty}$.
\end{proposition}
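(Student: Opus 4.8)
The plan is to prove the standard ``maximum principle'' for Boolean-valued models, adapted to allow strings of variables of arbitrary (possibly infinite) length. Fix an $\mathrm{L}_{\infty\infty}$-formula $\phi(\overline{v},\overline{w})$, a tuple $\overline{m}\in M^{\overline{w}}$, and abbreviate $s=\Qp{\exists\overline{v}\,\phi(\overline{v},\overline{m})}^{\mathcal{M}}_{\bool{B}}=\bigvee_{\overline{a}\in M^{\overline{v}}}\Qp{\phi(\overline{a},\overline{m})}^{\mathcal{M}}_{\bool{B}}$, the supremum existing since $\bool{B}$ is complete. I must produce a single $\overline{n}\in M^{\overline{v}}$ with $\Qp{\phi(\overline{n},\overline{m})}_{\bool{B}}=s$. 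The strategy is to build, by a Zorn's lemma argument, a maximal antichain $A\subseteq\bool{B}$ together with a choice of witnesses $\overline{a}_b\in M^{\overline{v}}$ (for $b\in A$) realizing $\phi$ below each $b$, and then to glue these witnesses into one element using the mixing property.

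Concretely, first I consider the poset of pairs $(A,f)$ where $A$ is an antichain of nonzero elements of $\bool{B}$, $f\colon A\to M^{\overline{v}}$, and $b\leq\Qp{\phi(f(b),\overline{m})}_{\bool{B}}$ for every $b\in A$, ordered by inclusion; a maximal element $(A,f)$ exists by Zorn's lemma. The key step is to show $\bigvee A=s$. The inequality $\bigvee A\leq s$ is immediate. For the reverse, suppose $\bigvee A<s$; then some witness must escape $A$, for if $\Qp{\phi(\overline{a},\overline{m})}\leq\bigvee A$ held for all $\overline{a}$ we would get $s\leq\bigvee A$. So there is $\overline{a}$ with $c=\Qp{\phi(\overline{a},\overline{m})}\wedge\neg\bigvee A>0$; this $c$ is incompatible with every element of $A$ and satisfies $c\leq\Qp{\phi(\overline{a},\overline{m})}$, so $(A\cup\{c\},f\cup\{(c,\overline{a})\})$ properly extends $(A,f)$, contradicting maximality. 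Hence $\bigvee A=s$.

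Finally I glue. Writing $\overline{v}=(v_j:j\in J)$ and $\overline{a}_b=((a_b)_j:j\in J)$, for each coordinate $j$ I apply the mixing property to the antichain $A$ and the family $\{(a_b)_j:b\in A\}$ to obtain $n_j\in M$ with $b\leq\Qp{n_j=(a_b)_j}_{\bool{B}}$ for all $b\in A$; this defines $\overline{n}=(n_j:j\in J)\in M^{\overline{v}}$. For a fixed $b\in A$, completeness of $\bool{B}$ gives $b\leq\bigwedge_{j\in J}\Qp{n_j=(a_b)_j}_{\bool{B}}$, and then Fact \ref{fac:pressubslambdaanyform} (substitution of provably equal terms into an arbitrary $\mathrm{L}_{\infty\infty}$-formula) yields $b\leq\big(\bigwedge_{j}\Qp{n_j=(a_b)_j}\big)\wedge\Qp{\phi(\overline{a}_b,\overline{m})}\leq\Qp{\phi(\overline{n},\overline{m})}_{\bool{B}}$. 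Taking the supremum over $b\in A$ gives $s=\bigvee A\leq\Qp{\phi(\overline{n},\overline{m})}_{\bool{B}}$, while $\Qp{\phi(\overline{n},\overline{m})}_{\bool{B}}\leq s$ holds by definition of $s$; hence equality. I expect the only real subtlety to be that the mixing property is stated for single elements of $M$, so fullness for a whole string $\overline{v}$ must be recovered coordinatewise, relying on completeness of $\bool{B}$ to take the (possibly infinite) conjunction of the equality values and on Fact \ref{fac:pressubslambdaanyform} to carry the substitution through an infinitary formula; the degenerate case $s=0_{\bool{B}}$ (equivalently $A=\emptyset$) is handled vacuously since $M\neq\emptyset$.
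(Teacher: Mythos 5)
Your proof is correct and follows essentially the same route as the paper's: both arguments fix a maximal antichain of conditions below which $\phi$ is witnessed, mix the witnesses coordinatewise, and invoke Fact \ref{fac:pressubslambdaanyform} to transport the Boolean value of $\phi$ from the local witnesses to the mixed tuple. Your Zorn's lemma formulation of the antichain construction and the explicit treatment of the parameters $\overline{m}$ and of the degenerate case are just more detailed renderings of steps the paper leaves implicit.
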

The proof of this proposition is deferred to Section \ref{subsec:mixfull}.

\begin{definition} \label{def:boolvalmod}
Let $\mathsf{B}$ be a $<\lambda$-complete Boolean algebra, $\mathcal{M}$ a full $\mathsf{B}$-valued model for $\mathrm{L}_{\kappa\lambda}$ where $\mathrm{L}$ is a relational $\lambda$-signature, and $F \subset \mathsf{B}$ a $<\lambda$-complete filter. The quotient of $\mathcal{M}$ by $F$ is the $\mathrm{L}$-structure $\mathcal{M}/_F$ defined as follows:
\begin{enumerate}
\item its domain $M/_F$ is the quotient of $M$ by the equivalence 
\[
\tau \equiv_F \sigma \lrao \Qp{\tau=\sigma} \in F,
\]
\item if $R \in \mathrm{L}$ is an $\alpha$-ary relation symbol, 
\[
R^{\mathcal{M}/_F}= \{({[\tau_i]}_F:i<\alpha) \in (M/_F)^\alpha : \Qp{R(\tau_i:i<\alpha)} \in F \},
\]
\item if $c \in \mathrm{L}$ is a constant symbol, 
\[
c^{\mathcal{M}/_F}= \bigl[c^\mathcal{M}\bigr]_F \in M/F.
\]
\end{enumerate}
\end{definition}

\begin{remark}
If  $\mathcal{M}$ a $\mathsf{B}$-valued model for $\mathrm{L}_{\kappa\lambda}$ so is
 $\mathcal{M}/_F$ is for $\bool{B}/_F$: condition \ref{eqn:subslambda} of Def. \ref{def:boolvalmod}
is satisfied by the quotient structure $\mathcal{M}/_F$ appealing to the $<\lambda$-completeness of $F$.
All other conditions of Def. \ref{def:boolvalmod} holds for $\mathcal{M}/_F$ 
just assuming $F$ being a filter.
Furthermore if $\mathcal{M}$ is full for $\mathrm{L}_{\kappa\lambda}$ 
and $F$ is also $<\kappa$-complete, so is $\mathcal{M}/_F$ (appealing to the $<\kappa$-completeness of $F$ to handle infinitary disjunctions and conjunctions and to the $<\lambda$-completeness of $F$ to handle infinitary quantifiers).
\end{remark}

\begin{theorem}[\L o\'s] \label{thm:fullLos}
Let $\lambda \leq \kappa$ be infinite cardinals, $\bool{B}$ be a $<\lambda$-complete Boolean algebra, $\mathcal{M}$ an $\mathrm{L}_{\kappa \lambda}$-full $\mathsf{B}$-valued model for $\mathrm{L}_{\kappa\lambda}$, and $U \subset \mathsf{B}$ a $<\max\bp{\kappa,\lambda}$-complete ultrafilter. Then, for every $\mathrm{L}_{\kappa\lambda}$-formula $\phi(\overline{v})$ and $\overline{\tau} \in M^{|\overline{v}|}$,
\[
\mathcal{M}/_U \vDash \phi(\overline{{[\tau]}_U}) \iff \Qp{\phi(\overline{\tau})}_\mathsf{B} \in U.
\]
\end{theorem}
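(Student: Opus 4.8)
The plan is to proceed by induction on the complexity of the $\mathrm{L}_{\kappa\lambda}$-formula $\phi(\overline{v})$, establishing the biconditional $\mathcal{M}/_U \vDash \phi(\overline{[\tau]_U}) \iff \Qp{\phi(\overline{\tau})}_\mathsf{B} \in U$ simultaneously for all assignments. First I would dispatch the base case: for an atomic formula $R(\tau_i : i < \alpha)$, the equivalence is exactly the definition of the interpretation $R^{\mathcal{M}/_U}$ in Definition \ref{def:boolvalmod}, so there is nothing to prove beyond unwinding notation; the equality atom is handled identically via the definition of $\equiv_U$. The negation step uses that $U$ is an ultrafilter: $\Qp{\neg\phi}_\mathsf{B} = \neg\Qp{\phi}_\mathsf{B} \in U$ iff $\Qp{\phi}_\mathsf{B} \notin U$, which by the inductive hypothesis is exactly $\mathcal{M}/_U \nvDash \phi$.

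Next come the infinitary connectives. For $\bigvee \Phi$ with $|\Phi| < \kappa$, one direction is immediate from the definition of the supremum: if some $\Qp{\psi}_\mathsf{B} \in U$ for $\psi \in \Phi$, then $\Qp{\bigvee\Phi}_\mathsf{B} \geq \Qp{\psi}_\mathsf{B}$ lies in $U$ by upward closure. The reverse direction is where the completeness hypothesis on $U$ enters: if $\Qp{\bigvee\Phi}_\mathsf{B} = \bigvee_{\psi\in\Phi}\Qp{\psi}_\mathsf{B} \in U$ but no single disjunct were in $U$, then each $\neg\Qp{\psi}_\mathsf{B} \in U$, and $<\max\{\kappa,\lambda\}$-completeness of $U$ (here using $|\Phi|<\kappa$) would force $\bigwedge_{\psi}\neg\Qp{\psi}_\mathsf{B} = \neg\Qp{\bigvee\Phi}_\mathsf{B} \in U$, a contradiction. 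The $\bigwedge$ case is dual, and the inductive hypothesis then transfers membership back to satisfaction in $\mathcal{M}/_U$.

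The quantifier step is the main obstacle, and it is precisely here that fullness is indispensable. Consider $\exists V\,\phi(V, \overline{\tau})$. One direction is routine: if $\mathcal{M}/_U \vDash \phi([\overline{a}]_U, \overline{[\tau]_U})$ for some witness $\overline{a} \in M^V$, the inductive hypothesis gives $\Qp{\phi(\overline{a}, \overline{\tau})}_\mathsf{B} \in U$, and monotonicity of the supremum yields $\Qp{\exists V\,\phi}_\mathsf{B} \geq \Qp{\phi(\overline{a},\overline{\tau})}_\mathsf{B} \in U$. The delicate direction assumes $\Qp{\exists V\,\phi(\overline{v},\overline{\tau})}_\mathsf{B} \in U$ and must manufacture an actual witness in the quotient structure. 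Fullness supplies some $\overline{n} \in M^V$ with $\Qp{\exists V\,\phi(\overline{v},\overline{\tau})}_\mathsf{B} = \Qp{\phi(\overline{n},\overline{\tau})}_\mathsf{B}$, so the right-hand value already lies in $U$; the inductive hypothesis then gives $\mathcal{M}/_U \vDash \phi([\overline{n}]_U, \overline{[\tau]_U})$, whence $\mathcal{M}/_U \vDash \exists V\,\phi$. The universal quantifier is handled dually, either directly via the infimum together with the completeness of $U$, or by rewriting $\forall V\,\phi$ as $\neg\exists V\,\neg\phi$ and invoking the already-proved negation and existential cases.

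Two bookkeeping points I would verify along the way. First, one must check that $\mathcal{M}/_U$ is a genuine (two-valued) Tarski $\mathrm{L}$-structure well-defined on $\equiv_U$-classes; this is exactly the content of the Remark following Definition \ref{def:boolvalmod}, using that $U$ is a $<\lambda$-complete (indeed $<\max\{\kappa,\lambda\}$-complete) filter, so the induction is being carried out over a legitimate structure. Second, the completeness threshold $<\max\{\kappa,\lambda\}$ is calibrated to simultaneously cover disjunctions and conjunctions of size $<\kappa$ (the connective clauses) and blocks of quantified variables indexed by $I \in [\kappa]^{<\lambda}$ (the quantifier clauses, where the supremum or infimum ranges over $M^V$ and completeness is applied to the family of Boolean values indexed by $\overline{a} \in M^V$); I would note explicitly that fullness reduces the quantifier case to a single witness, so that for the existential and universal steps it is fullness rather than completeness of $U$ that does the essential work, with completeness reserved for the infinitary connectives.
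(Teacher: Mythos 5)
Your proof is correct and follows exactly the route the paper intends: the paper's own ``proof'' is merely a citation of the finitary case plus the one-line remark that $<\kappa$-completeness of $U$ handles the infinitary connectives while fullness and $<\lambda$-completeness handle quantification over infinite strings, and your induction fleshes out precisely that sketch (including the well-definedness bookkeeping via Fact \ref{fac:pressubslambdaanyform}). One caution: for the universal quantifier, your first suggested option (``directly via the infimum together with the completeness of $U$'') fails in the direction $\mathcal{M}/_U \vDash \forall V\,\phi \Rightarrow \Qp{\forall V\,\phi}_\mathsf{B} \in U$, because the infimum ranges over all of $M^V$, whose cardinality may far exceed the completeness degree of $U$; only your second option (rewrite as $\neg\exists V\,\neg\phi$ and use fullness to extract a single witness) is viable, which is consistent with the paper's insistence that fullness, not completeness, is what carries the quantifier step.
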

\begin{proof}
A proof of the Theorem for $\mathrm{L}_{\omega\omega}$ for $\omega$-relational signatures is given in \cite[Thm. 5.3.7]{viale-notesonforcing}. The general case uses the $<\kappa$-completeness of the ultrafilter to handle  $<\kappa$-sized disjunctions and conjunctions, and its $<\lambda$-completeness and the fullness of $\mathcal{M}$ to handle quantifiers on infinite strings.
\end{proof}

From now on we will work only with complete Boolean algebras $\bool{B}$, hence 
$\bool{B}$-valued models are automatically well behaved for $\mathrm{L}_{\infty\infty}$.

\subsection{Boolean satisfiability}


\begin{definition}
$\mathrm{BVM}$ denotes the class of Boolean valued models with values on a complete Boolean algebra and $\mathrm{Sh}$ the subclass of Boolean valued models with values on a complete Boolean algebra which have the mixing property. Let $\Gamma$ and $\Delta$ be sets of $\mathrm{L}_{\infty\infty}$-formulae. In case $\Gamma = \emptyset$ we let  
\[
\Qp{\bigwedge \Gamma}_\bool{B}^\mathcal{M} = 1_{\bool{B}},
\]
and if $\Delta = \emptyset$ we let
\[
\Qp{\bigvee \Delta}_\bool{B}^\mathcal{M} = 0_\bool{B}.
\]

\begin{itemize}
\item $\Gamma$ is \emph{weakly Boolean satisfiable} if there is a complete Boolean algebra $\bool{B}$ and 
a $\bool{B}$-valued model $\mathcal{M}$ such that $\Qp{\phi}^{\mathcal{M}}_\bool{B}>0_\bool{B}$ for each $\phi\in \Gamma$.
\item $\Gamma$ is \emph{Boolean satisfiable} if there is a complete Boolean algebra 
$\bool{B}$ and 
a $\bool{B}$-valued model $\mathcal{M}$ such that $\Qp{\phi}^{\mathcal{M}}_\bool{B}=1_\bool{B}$ for each $\phi\in \Gamma$.
\item $\Gamma\vDash_\mathrm{BVM} \Delta$ if 
\[
\Qp{\bigwedge\Gamma}^{\mathcal{M}}_\bool{B}\leq\Qp{\bigvee\Delta}^{\mathcal{M}}_\bool{B}
\]
for any complete Boolean algebra $\bool{B}$ and 
$\bool{B}$-valued model $\mathcal{M}$.
\item $\Gamma\vDash_\mathrm{Sh} \Delta$ if 
\[
\Qp{\bigwedge\Gamma}^{\mathcal{M}}_\bool{B}\leq\Qp{\bigvee\Delta}^{\mathcal{M}}_\bool{B}
\]
for any complete Boolean algebra $\bool{B}$ and 
$\bool{B}$-valued model $\mathcal{M}$ with the mixing property.
\item $\Gamma \equiv_{\mathrm{BVM}} \Delta$ if $\Gamma \vDash_\mathrm{BVM} \Delta$ and $\Delta \vDash_\mathrm{BVM} \Gamma$.
\item $\Gamma \equiv_{\mathrm{Sh}} \Delta$ if $\Gamma \vDash_\mathrm{Sh} \Delta$ and $\Delta \vDash_\mathrm{Sh} \Gamma$.
\end{itemize}
\end{definition}

\subsection{Proof systems for $\mathrm{L}_{\infty\infty}$}\label{subsec:gentzencalc}

We present a proof system for $\mathrm{L}_{\infty\infty}$ that is a direct generalization of the Sequent Calculus from first order logic. $\Gamma$,$\Gamma'$,$\Delta$ and $\Delta'$ denote sets of $\mathrm{L}_{\infty\infty}$-formulae of any cardinality, $\overline{v},\overline{w}$ denote 
set-sized sequences of variables,  $\overline{t},\overline{u}$ denote set-sized sequences of terms, and $I$ denotes an index set. When dealing with sequents, and in order to make proofs shorter, we will assume that formulae only contain $\neg,\bigwedge$ and $\forall$ as logical symbols; this is not restrictive as all reasonable semantics for these logics (among which all those we consider in this paper) should validate the natural logical equivalences $\neg\forall\vec{v}\neg\phi\equiv\exists\vec{v}\phi$,  $\neg\bigwedge_{i\in I}\neg\phi_i\equiv\bigvee_{i\in I}\phi_i$.

\begin{definition} Given $\Gamma,\Delta$ arbitrary sets of $\mathrm{L}_{\infty\infty}$-formulae, a proof of $\Gamma \vdash \Delta$ in $\mathrm{L}_{\infty \infty}$ is a sequence $(s_\alpha)_{\alpha \leq \beta}$ of sequents, where  $s_\beta$ is $\Gamma \vdash \Delta$ and each element $s_\alpha$ is either an axiom or comes from an application of the following rules to $(s_i)_{i<\alpha}$.
\end{definition}

\begin{displaymath}
  \begin{array}{lcc@{\qquad}l}
    \mbox{Axiom rule} &
    \prftree{}{\Gamma, \phi \vdash \phi, \Delta} &
	\prftree{\Gamma, \phi \vdash \Delta}{\Gamma' \vdash \phi, \Delta'}{\Gamma,\Gamma' \vdash \Delta, \Delta'} &   
    \mbox{Cut Rule} \\
        	         \\
    \mbox{Substitution} &
    \prftree{\Gamma \vdash \Delta}{\Gamma(\overline{w} \diagup \overline{v}) \vdash \Delta(\overline{w} \diagup \overline{v})} & 
    \prftree{\Gamma\vdash \Delta}{\Gamma,\Gamma' \vdash \Delta, \Delta'} &   
    \mbox{Weakening}\\
        			\\
    \mbox{Left Negation} &
    \prftree{\Gamma \vdash \phi, \Delta}{\Gamma, \neg \phi \vdash \Delta} & 
    \prftree{\Gamma, \phi \vdash \Delta}{\Gamma \vdash \neg \phi, \Delta} &
    \mbox{Right Negation} \\
    					  \\
    \mbox{Left Conjunction} &
    \prftree{\Gamma,\Gamma' \vdash \Delta}{\Gamma,\bigwedge \Gamma' \vdash \Delta} &
    \prftree{\Gamma \vdash \phi_i, \Delta \ ,\ i \in I}{\Gamma \vdash \bigwedge_{i \in I} \{\phi_i : i \in I\}, \Delta} &
    \mbox{Right Conjunction} \\
    						 \\
    \mbox{Left Quantification} &
    \prftree{\Gamma, \phi(\overline{t} \diagup \overline{v}) \vdash \Delta}{\Gamma, \forall \overline{v} \phi(\overline{v}) \vdash \Delta} &
    \prftree[r]{*}{\Gamma \vdash \phi(\overline{w} \diagup \overline{v} ), \Delta}{\Gamma \vdash \forall \overline{v}\phi(\overline{v}), \Delta} &
    \mbox{Right Quantification}\\
    						   \\
    \mbox{Equality 1} &
    \prftree{}{ v_\alpha = v_\beta \vdash v_\beta = v_\alpha} &
    \prftree[r]{}{}{\overline{u} = \overline{t}, \phi(\overline{t}) \vdash \phi(\overline{u})} &
    \mbox{Equality 2}\\

  \end{array}
\end{displaymath}
\vspace{0,5cm}

* The Right Quantification rule can only be applied in the case that none of the variables from $\overline{w}$ occurs free in formulae of $\Gamma\cup\Delta\cup\bp{\phi}$.\medskip

\begin{remark}
It needs to be noted that with this deduction system the completeness theorem for $\mathrm{L}_{\infty\infty}$ (even for $\mathrm{L}_{\omega_2\omega}$) fails for the usual semantics given by Tarski structures.

Remark first that our proof system is forcing invariant: the existence of a proof for a certain sentence is described by a $\Sigma_1$ statement in parameter the sequent to be proved; if the proof exists in $V$ it exists in any further extension of $V$.

Consider now  a set of $\kappa$ constants $\{c_\alpha : \alpha < \kappa\}$ for $\kappa>\omega$ and the sentence 
\[ 
\psi := \bigg( \bigwedge_{\omega \leq \alpha \neq \beta} c_\alpha \neq c_\beta \bigg) \Rao \exists v \bigg( \bigwedge_{n < \omega} v \neq c_n \bigg). 
\]
The sentence $\psi$ is valid in the usual Tarski semantics but it cannot be proved (in our deduction system or in any forcing invariant system) since the sentence is no longer valid when moving to $V[G]$ for $G$ a $V$-generic filter for $\Coll(\omega,\kappa)$. 

Malitz \cite[Thm. 3.2.4]{MalitzThesis} showed also that the above formula is a counterexample to Craig's interpolation property for Tarski semantics in $\mathrm{L}_{\infty\omega}$.
\end{remark}

Our opinion is that a proof system should not depend on the model of set theory in which one is working, which is the case for the proof system we present here at least when restricted to $\mathrm{L}_{\infty\omega}$.
In contrast with our point of view, one finds a complete proof system for Tarski semantics on $\mathrm{L}_{\infty\infty}$ in Malitz's thesis \cite[Thm. 3.3.1]{MalitzThesis}, however this proof system (which by the way is due to Karp \cite[Ch. 11]{Karp}) is not forcing invariant e.g. a proof of some sequent in some model of set theory may not be anymore a proof of that same sequent in some forcing extension.

\section{Main model theoretic results}\label{sec:mainmodthres}

These are the main model theoretic results of the paper.

\subsection{Results for $\mathrm{L}_{\infty \omega}$}

\begin{theorem}[Boolean Completeness for $\mathrm{L}_{\infty\omega}$]  \label{them:boolcompl}
Let $\mathrm{L}$ be an $\omega$-relational signature. 
The following are equivalent for $T,S$ 
sets of $\mathrm{L}_{\infty\omega}$-formulae.
\begin{enumerate}
\item \label{thm:boolcomp1}
$T\models_{\mathrm{Sh}}S$,
\item \label{thm:boolcomp2}
$T\models_{\mathrm{BVM}}S$,
\item \label{thm:boolcomp3}
$T\vdash S$.
\end{enumerate}
\end{theorem}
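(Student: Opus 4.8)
The plan is to prove the cycle of implications $\eqref{thm:boolcomp3}\Rightarrow\eqref{thm:boolcomp2}\Rightarrow\eqref{thm:boolcomp1}\Rightarrow\eqref{thm:boolcomp3}$, the first two being the soundness directions and the last being the genuine completeness direction where all the real work lies. For soundness, $\eqref{thm:boolcomp3}\Rightarrow\eqref{thm:boolcomp2}$, I would argue by induction on the length of a proof $(s_\alpha)_{\alpha\le\beta}$: one checks that each axiom $\Gamma,\phi\vdash\phi,\Delta$ and each equality axiom is valid in every $\bool{B}$-valued model (the equality axioms use Fact \ref{fac:pressubslambdaanyform}, i.e.\ the substitution property for arbitrary $\mathrm{L}_{\infty\infty}$-formulae), and that each inference rule preserves the validity relation $\Qp{\bigwedge\Gamma}\le\Qp{\bigvee\Delta}$. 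The Cut, Negation, Conjunction and Weakening rules translate into routine Boolean-algebra manipulations; the delicate rules are Left and Right Quantification, which rely on the semantic clauses $\Qp{\forall V\phi}=\bigwedge_{\overline a}\Qp{\phi[\ldots]}$ and, for Right Quantification, on the side condition that the witnessing variables $\overline w$ do not occur free in $\Gamma\cup\Delta\cup\{\phi\}$, exactly as in the first-order soundness argument. The step $\eqref{thm:boolcomp2}\Rightarrow\eqref{thm:boolcomp1}$ is immediate, since $\mathrm{Sh}$ is a subclass of $\mathrm{BVM}$, so validity over all models trivially implies validity over those with the mixing property.

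The heart of the theorem is the completeness direction $\eqref{thm:boolcomp1}\Rightarrow\eqref{thm:boolcomp3}$, and I would prove the contrapositive: assuming $T\nvdash S$, I would construct a $\bool{B}$-valued model $\mathcal{M}$ with the mixing property witnessing $T\not\models_{\mathrm{Sh}}S$, i.e.\ with $\Qp{\bigwedge T}^{\mathcal M}_{\bool B}\not\le\Qp{\bigvee S}^{\mathcal M}_{\bool B}$. This is precisely where the machinery of consistency properties advertised in the introduction enters. Given that $T\nvdash S$, I would build a consistency property $\mathbb{P}$ for the signature $\mathrm{L}_{\infty\omega}$ whose conditions are finite (or suitably bounded) consistent families of formulae, arranged so that $T$ is realized but no formula of $S$ need be, and so that the dense sets coding the inductive semantic clauses are available. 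By the results promised in Section \ref{ForConPro} (that every consistency property for $\mathrm{L}_{\infty\omega}$ produces a boolean valued model with the mixing property, e.g.\ a sheaf on an extremally disconnected compact space), the boolean completion $\bool B=\RO(\mathbb P)$ carries a term model $\mathcal M$ in which the generic object assembles witnesses for every existential and every disjunction, and in which $\Qp{\phi}^{\mathcal M}_{\bool B}$ is computed as the regular-open set of conditions forcing $\phi$ into the generic filter. The point $T\nvdash S$ guarantees that the condition forcing $\bigwedge T$ while refuting $\bigvee S$ is a nonzero element of $\bool B$, yielding the required failure of $\models_{\mathrm{Sh}}$.

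The main obstacle I anticipate is establishing, inside the term-model construction, the crucial semantic equation that $\Qp{\phi}^{\mathcal M}_{\bool B}$ equals the boolean value computed directly from the generic filter meeting the dense set $D_\phi$ associated to $\phi$ — in effect a truth lemma by induction on formula complexity. The quantifier case is where $\mathrm{L}_{\infty\omega}$ is essential rather than $\mathrm{L}_{\infty\infty}$: because quantification is only over \emph{finite} strings of variables, the density arguments supplying witnesses for $\exists v\,\phi$ go through with the mixing property, whereas infinitary quantification would obstruct the construction of witnesses at every condition (this is exactly why the introduction flags the result as holding only for $\mathrm{L}_{\infty\omega}$). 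Matching the syntactic Gentzen rules to the clauses defining the consistency property — so that provability $T\vdash S$ corresponds exactly to the generic emptiness of the relevant condition — is the bookkeeping one must get right, and I expect the verification that the constructed $\mathbb P$ genuinely satisfies all clauses of a consistency property (Section \ref{sec:consprop}) to be the most technically demanding part. Since this truth lemma and the consistency-property construction are developed in Sections \ref{sec:consprop} and \ref{ForConPro}, I would organize the proof of this theorem in Section \ref{sec:proofmodthres} as an application of those two sections, citing the model-existence result there rather than reproving it here.
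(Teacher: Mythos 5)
Your proposal follows essentially the same route as the paper: the soundness implications are treated as routine, and the completeness direction is proved contrapositively by turning the assumption $T\nvdash S$ into a consistency property (in the paper, the finite sets $r$ of $\mathrm{L}(\mathcal{C})$-sentences with $r\cup T\nvdash S$, each using only finitely many fresh constants), verifying its clauses via the Gentzen rules, and then invoking the model existence theorem of Section \ref{ForConPro} to obtain a mixing $\RO(\mathbb{P}_R)$-valued model in which every $\psi\in T$ gets value $1$ and every $\phi\in S$ gets value $0$ by density of the sets $\bp{r:\psi\in r}$ and $\bp{r:\neg\phi\in r}$. The only detail left implicit in your sketch is the precise definition of the conditions as ``finite sets not yielding a proof of $S$ from $T$,'' which is exactly what makes the density and clause verifications go through, but your plan clearly points at this and matches the paper's argument.
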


\begin{theorem}[Boolean Craig Interpolation]\label{thm:craigint}
Assume $\vDash_{\mathrm{Sh}}\phi \rightarrow \psi$ with $\phi,\psi \in \mathrm{L}_{\kappa\omega}$. 
Then there exists a sentence $\theta$ in $\mathrm{L}_{\kappa\omega}$ such that 

\begin{itemize}
\item $\vDash_{\mathrm{Sh}} \phi \rightarrow \theta$,
\item $\vDash_{\mathrm{Sh}} \theta \rightarrow \psi$,
\item all non logical symbols appearing in $\theta$ appear both in $\phi$ and $\psi$. 
\end{itemize}
\end{theorem}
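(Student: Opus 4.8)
The plan is to derive the interpolation theorem from the Boolean Completeness Theorem \ref{them:boolcompl} together with the model existence machinery for consistency properties of Sections \ref{sec:consprop}--\ref{ForConPro}, by running the classical Keisler--Lopez-Escobar separation argument but replacing its final appeal to Baire category (which would require countability) with the construction of a sheaf model out of a consistency property.

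First I would recast everything syntactically. Since $\vDash_{\mathrm{Sh}}\phi\to\psi$ unfolds to $\Qp{\phi}^{\mathcal{M}}_{\bool{B}}\le\Qp{\psi}^{\mathcal{M}}_{\bool{B}}$ in every sheaf model, it is literally the statement $\phi\vDash_{\mathrm{Sh}}\psi$, which by Theorem \ref{them:boolcompl} is equivalent to the derivability $\phi\vdash\psi$. Likewise a sentence $\theta$ satisfies the three bullet conditions precisely when $\theta\in\mathrm{L}_{\kappa\omega}(\tau)$ for the common signature $\tau=\tau_\phi\cap\tau_\psi$ (the non logical symbols of $\phi$ and $\psi$) and $\phi\vdash\theta$, $\theta\vdash\psi$. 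So the theorem reduces to a \emph{Separation Lemma}: if $\phi\vdash\psi$ then the pair $(\{\phi\},\{\neg\psi\})$, read in the signatures $\tau_\phi$ and $\tau_\psi$, is separated by some $\theta\in\mathrm{L}_{\kappa\omega}(\tau)$ with $\phi\vdash\theta$ and $\neg\psi\vdash\neg\theta$ (equivalently $\theta\vdash\psi$, using the negation rules of the calculus). Note $\phi\vdash\psi$ gives $\phi,\neg\psi\vdash$ by Left Negation, so $\{\phi,\neg\psi\}$ is inconsistent, which is exactly the hypothesis of the Separation Lemma.

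Second, I would prove the Separation Lemma by contraposition. Assume no such interpolant exists; I call this \emph{inseparability} of $(\{\phi\},\{\neg\psi\})$ over $\tau$. Following Keisler's interpolation argument, I set up a consistency property $\mathcal{S}$ for the union signature $\tau^+=\tau_\phi\cup\tau_\psi$ enriched with a supply of Henkin constants, whose conditions are pairs $(\Gamma_1,\Gamma_2)$ of $\mathrm{L}_{\kappa\omega}$-theories, with $\Gamma_1$ in $\tau_\phi$ and $\Gamma_2$ in $\tau_\psi$, that remain inseparable over the common signature together with the constants used so far, and with $\phi\in\Gamma_1$, $\neg\psi\in\Gamma_2$ at the root. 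The consistency-property clauses for $\neg$, $\bigwedge$ and $\exists$ are verified as in the first order separation proof, inseparability being preserved under each closure step; this is where the restriction to finite-string quantifiers (i.e. to $\mathrm{L}_{\infty\omega}$, rather than $\mathrm{L}_{\infty\infty}$) and the availability of conjunctions of size $<\kappa$ are used, so that the separating sentences always stay inside $\mathrm{L}_{\kappa\omega}(\tau)$ and the bound $\kappa$ is preserved. Then the model existence theorem of Section \ref{ForConPro} turns $\mathcal{S}$ into a Boolean valued model $\mathcal{M}$ with the mixing property in signature $\tau^+$; inseparability guarantees that the $\tau$-reducts arising from the two coordinates amalgamate into a single such $\mathcal{M}$ in which the root condition is realized with positive value, so $\Qp{\phi\wedge\neg\psi}^{\mathcal{M}}_{\bool{B}}>0_{\bool{B}}$. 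But $\phi\vDash_{\mathrm{Sh}}\psi$ forces $\Qp{\phi\wedge\neg\psi}^{\mathcal{M}}_{\bool{B}}=0_{\bool{B}}$ in every sheaf model, a contradiction. Hence an interpolant $\theta\in\mathrm{L}_{\kappa\omega}(\tau)$ exists.

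I expect the main obstacle to be the construction in the third paragraph: pinning down the right notion of a consistency property of inseparable pairs over two overlapping signatures, and checking that inseparability is genuinely preserved under every syntactic closure operation, in particular under infinitary $\bigwedge$ and under adding Henkin witnesses in each coordinate without letting the two coordinates communicate through symbols outside $\tau$. The subsequent amalgamation, i.e. reading off a single $\tau^+$-valued model whose two reducts agree on $\tau$, must be organized so that the output genuinely has the mixing property, which is precisely what Section \ref{ForConPro} is built to deliver; the crux is verifying that $\phi$ and $\neg\psi$ land together with positive Boolean value in one and the same sheaf model.
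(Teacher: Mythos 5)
Your proposal is correct and follows essentially the same route as the paper: the paper also defines a consistency property whose conditions are finite sets $s=s_1\cup s_2$ with $s_1$ in the signature of $\phi$, $s_2$ in that of $\psi$, subject to the "inseparability" clause that any common-signature consequences $\theta$ of $s_1$ and $\sigma$ of $s_2$ are jointly Boolean consistent, then applies the $\mathrm{L}_{\infty\omega}$ model existence theorem of Section \ref{ForConPro} (yielding a mixing model) to conclude that $\{\phi,\neg\psi\}$ cannot be a condition, and reads off the interpolant from the failing clause. The only cosmetic differences are that the paper works semantically throughout rather than first passing to $\vdash$ via completeness, and that it restricts the separating sentences $\theta,\sigma$ to contain no Henkin constants (which is what lets the fresh-constant step in the $\exists$ clause go through via fullness of the mixing model).
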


Recall the Beth definability property:
\begin{definition}
Let $\mathrm{L}$ be a relational $\lambda$-signature and $R$ be an $\alpha$-ary relation symbol not in $\mathrm{L}$ for some $\alpha<\lambda$.
Given $\lambda,\kappa\in\mathsf{Card}\cup\bp{\infty}$,
let $T$ be a  $\mathrm{L}_{\kappa\lambda}'$-theory for $\mathrm{L}'=\mathrm{L}\cup\bp{R}$.
 
 \begin{itemize}
 \item $R$ is implicitly Boolean definable from $T$ in a relational $\lambda$-signature $\mathrm{L}$
if the following holds: whenever $\mathcal{M}$ and $\mathcal{N}$ are $\bool{B}$-valued models of $T$ with domain $M$ such that
 $\mathcal{M}\restriction\mathrm{L}=\mathcal{N}\restriction\mathrm{L}$, we have that
 $\Qp{R(\tau_i:i\in\alpha)}^{\mathcal{M}}=\Qp{R(\tau_i:i\in\alpha)}^{\mathcal{N}}$
 for all $(\tau_i:i\in\alpha)\in M^\alpha$.
  \item $R$ is explicitly Boolean definable from $T$ in $\mathrm{L}_{\kappa\lambda}$
if
\[
T\vdash \forall (v_i:i\in\alpha)\,(R(v_i:i\in\alpha)\leftrightarrow\phi(v_i:i\in\alpha))
\]
 for some $\mathrm{L}_{\kappa\lambda}$-formula $\phi(v_i:i\in\alpha)$.
 \end{itemize}
The Boolean Beth definability property for $\mathrm{L}_{\kappa\lambda}$ (with $\lambda,\kappa\in\mathsf{Card}\cup\bp{\infty}$) states that for all
relational $\lambda$-signatures
$\mathrm{L}$ and  $(\mathrm{L}\cup\bp{R})_{\infty\omega}$-theory $T$,
$R$ is implicitly definable from $T$ in $\mathrm{L}\cup\bp{R}$
if and only if it is explicitly definable from $T$ in $\mathrm{L}_{\kappa\lambda}$.
 \end{definition}
 
  This is a standard consequence of Craig's interpolation and completeness
  (see for example \cite[Thm. 6.42]{ModelsGames}; the same proof applies to our context in view of  the properties of our calculus $\vdash$).
 \begin{theorem}\label{thm:bethdef}
 $\mathrm{L}_{\infty\omega}$ has the Boolean Beth definability property.
 \end{theorem}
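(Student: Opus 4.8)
The plan is to run the classical derivation of Beth definability from Craig interpolation (as in \cite[Thm. 6.42]{ModelsGames}), checking that each step survives the passage to Boolean valued semantics and to the infinitary calculus $\vdash$. The direction ``explicit $\Rightarrow$ implicit'' is routine: if $T\vdash\forall\bar v\,(R(\bar v)\leftrightarrow\phi(\bar v))$ with $\phi$ an $\mathrm{L}_{\infty\omega}$-formula, then by soundness (the easy half of Theorem~\ref{them:boolcompl}) the value $\Qp{R(\bar\tau)}$ equals $\Qp{\phi(\bar\tau)}$ in every $\bool{B}$-valued model of $T$; since $\phi$ is an $\mathrm{L}$-formula its value depends only on the $\mathrm{L}$-reduct, so two models of $T$ with the same domain and $\mathrm{L}$-reduct assign the same Boolean value to $R(\bar\tau)$. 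The content is in the converse.

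For ``implicit $\Rightarrow$ explicit'' I would first fold the theory into one sentence. Because $\mathrm{L}_{\infty\omega}$ admits arbitrary conjunctions, $\sigma_T:=\bigwedge T$ is a single $(\mathrm{L}\cup\bp{R})_{\infty\omega}$-sentence; this is what replaces the compactness/finite-reduction step of the first order argument (recall boolean compactness fails here). Introduce a fresh copy $R'$ of $R$ of the same finite arity $\alpha<\omega$, let $\sigma_{T'}$ be $\sigma_T$ with every occurrence of $R$ rewritten as $R'$, and add $\alpha$ fresh constants $\bar c=(c_i:i<\alpha)$. The key semantic claim is
$$\vDash_{\mathrm{Sh}}\big(\sigma_T\wedge R(\bar c)\big)\to\big(\sigma_{T'}\to R'(\bar c)\big).$$
To prove it, fix any mixing $\bool{B}$-valued model $\mathcal P$ of the combined signature, let $\bar\tau$ interpret $\bar c$, and put $b:=\Qp{\sigma_T}^{\mathcal P}\wedge\Qp{\sigma_{T'}}^{\mathcal P}$; it suffices to show $b\wedge\Qp{R(\bar\tau)}^{\mathcal P}\le\Qp{R'(\bar\tau)}^{\mathcal P}$. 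Relativize $\mathcal P$ to the complete algebra $\bool{B}\restriction b=\bp{a\in\bool{B}:a\le b}$ by setting $\Qp{\cdot}^{\mathcal P_b}=\Qp{\cdot}^{\mathcal P}\wedge b$; a direct check (using completeness of $\bool{B}$ for conjunctions and quantifiers, and $b\wedge\neg a=b\wedge\neg(a\wedge b)$ for negation) shows $\mathcal P_b$ is a genuine $\bool{B}\restriction b$-valued model with the same domain, in which $\sigma_T$ and $\sigma_{T'}$ both take the top value. Hence the $(\mathrm{L}\cup\bp{R})$-reduct $\mathcal M$ and the $(\mathrm{L}\cup\bp{R'})$-reduct of $\mathcal P_b$ (read as an $R$-structure $\mathcal N$ via the renaming) are two $\bool{B}\restriction b$-valued models of $T$ with identical domain and $\mathrm{L}$-reduct; implicit definability gives $\Qp{R(\bar\tau)}^{\mathcal M}=\Qp{R(\bar\tau)}^{\mathcal N}$, i.e. $b\wedge\Qp{R(\bar\tau)}^{\mathcal P}=b\wedge\Qp{R'(\bar\tau)}^{\mathcal P}$, which yields the inequality.

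Next I would apply Boolean Craig interpolation (Theorem~\ref{thm:craigint}) to $\phi:=\sigma_T\wedge R(\bar c)$ and $\psi:=\sigma_{T'}\to R'(\bar c)$: choosing $\kappa$ above $|T|$ and the sizes of the conjunctions occurring in $T$ gives $\phi,\psi\in\mathrm{L}_{\kappa\omega}$, so one obtains $\theta\in\mathrm{L}_{\kappa\omega}$ whose nonlogical symbols lie in the common part $\mathrm{L}\cup\bp{\bar c}$, with $\vDash_{\mathrm{Sh}}\phi\to\theta$ and $\vDash_{\mathrm{Sh}}\theta\to\psi$. Replacing the constants $\bar c$ by variables $\bar v$ turns $\theta$ into an $\mathrm{L}_{\infty\omega}$-formula $\phi_0(\bar v)$ over $\mathrm{L}$. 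The first interpolant inequality gives $T\vDash_{\mathrm{Sh}}R(\bar v)\to\phi_0(\bar v)$; for the reverse, in any mixing model of $T$ reinterpret $R'$ as $R$ (so $\sigma_{T'}$ holds with value $1$) and use that $\phi_0$ is an $\mathrm{L}$-formula, insensitive to the renaming, to read the second inequality as $T\vDash_{\mathrm{Sh}}\phi_0(\bar v)\to R(\bar v)$. Thus $T\vDash_{\mathrm{Sh}}\forall\bar v\,(R(\bar v)\leftrightarrow\phi_0(\bar v))$, and completeness (Theorem~\ref{them:boolcompl}) upgrades this to $T\vdash\forall\bar v\,(R(\bar v)\leftrightarrow\phi_0(\bar v))$, i.e. explicit definability.

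The main obstacle I anticipate is the semantic claim of the second paragraph. The first order proof uses compactness to replace $T$ by a finite conjunction before invoking interpolation, and that route is unavailable here; the substitute is folding $T$ into the single infinitary sentence $\sigma_T$ together with the relativization to $\bool{B}\restriction b$, which is precisely the device that converts the ``same domain, same $\mathrm{L}$-reduct'' hypothesis of implicit definability into a Boolean inequality valid below an arbitrary condition $b$. This is the one step that must be verified with care; everything else — the symbol bookkeeping for $\theta$, the passage from constants to variables, and the final appeal to completeness — is routine.
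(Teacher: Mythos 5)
Your proof is correct and is precisely the ``standard consequence of Craig's interpolation and completeness'' that the paper invokes without spelling out (it only points to V\"a\"an\"anen's Thm.~6.42 and remarks that the same proof applies); your relativization of a model to $\bool{B}\restriction b$ with $b=\Qp{\sigma_T}\wedge\Qp{\sigma_{T'}}$ is the right replacement for the compactness step and is the same device the authors use in their proof of Theorem~\ref{thm:craigint2}. The only point worth making explicit is that the concluding claim $T\vDash_{\mathrm{Sh}}\forall\bar v\,(R(\bar v)\leftrightarrow\phi_0(\bar v))$ requires, by the paper's definition, the inequality $\Qp{\bigwedge T}\leq\Qp{\forall\bar v\,(R(\bar v)\leftrightarrow\phi_0(\bar v))}$ in \emph{arbitrary} mixing models rather than only in those assigning $T$ the value $1_{\bool{B}}$; this is closed by one more application of the same $\bool{B}\restriction b$ relativization (which preserves the mixing property), or simply by quoting Theorem~\ref{them:boolcompl}.
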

 
Another main result we present is the Boolean omitting types theorem. We need to clarify
 some notation so to make its statement intelligible. Suppose $\Sigma(v_1,\dots,v_n)$ is a set of $\mathrm{L}_{\infty\infty}$-formulae in free variables $v_1,\dots,v_n$. We say that a model $\mathcal{M}$ realizes $\Sigma(v_1,\dots,v_n)$ if there  exists some $m_1,\dots,m_n \in M$ such that 
\[
\mathcal{M} \vDash \bigwedge \Sigma(m_1,\dots,m_n).
\]
 $\mathcal{M}$ omits the type $\Sigma$ amounts to say that for any $m_1,\dots,m_n \in M$, 
\[
\mathcal{M} \vDash \bigvee_{\phi \in \Sigma} \neg \phi(m_1,\dots,m_n).
\]
Thus, a model $\mathcal{M}$ omits the family of types $\mathcal{F} = \{\Sigma(v_1,\ldots,v_{n_\Sigma}) : \Sigma\in \mathcal{F}\}$ if it models the sentence
\[
\bigwedge_{\Sigma\in \mathcal{F}} \forall \overline{v}_\Sigma \bigvee \bp{\neg\phi(\overline{v}_\Sigma):\phi\in \Sigma}.
\]
In the following proof the sets $\Phi$ will be playing the roles of $\bp{\neg \psi : \psi \in \Sigma}$, where $\Sigma$ is the type we wish to omit. In this context, the type $\Sigma$ is not isolated by a sentence $\theta$ if whenever there is a model of $\theta$, there is also a model of $\theta \wedge \neg\phi$ for some $\phi \in \Sigma$.

%
%
%

\begin{theorem}[Boolean Omitting Types Theorem]\label{thm:omittypthm}
Let $T$ be a set-sized Boolean satisfiable $\mathrm{L}_{\infty\omega}$-theory. Assume $\mathcal{F}$ is a set-sized family such that each $\Phi \in\mathcal{F}$ is a set of $\mathrm{L}_{\infty \omega}$-formulae with the property that each $\phi \in \Phi$ has free variables among $v_0,\dots, v_{n_\Phi - 1}$. Let $\mathrm{L}_{T,\mathcal{F}}$ be the smallest fragment of $\mathrm{L}_{\infty \omega}$ such that $T, \Phi \subset \mathrm{L}_{T,\mathcal{F}}$ for all $\Phi \in \mathcal{F}$. Suppose that no $\Phi$ is boolean isolated in $\mathrm{L}_{T,\mathcal{F}}$, i.e.; for all $\mathrm{L}_{T,\mathcal{F}}$-formula $\theta$ in free variables $v_0,\dots,v_{n_\theta - 1}$,
\[
T + \exists v_0 \dots v_{n_\theta - 1}\,\theta
\]
is Boolean satisfiable if and only if so is 
\[
T + \exists v_0 \dots v_{\max\bp{n_\theta - 1, n_\Phi - 1}}\,[\theta \wedge \phi]
\]
for some $\phi \in \Phi$. Then there exists a Boolean valued model $\mathcal{M}$ with the mixing property such that 
\[
\mathcal{M} \vDash T + \bigwedge_{\Phi\in\mathcal{F}} \forall v_0 \dots v_{n_\Phi - 1}\,  \bigvee \Phi.
\]   
\end{theorem}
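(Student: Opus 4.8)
The plan is to realize $T$ together with all the omitting requirements as the Boolean valued model with the mixing property attached to a suitable consistency property, exactly in the spirit described in the introduction: we replace Keisler's appeal to Baire's category theorem by the machinery of Section~\ref{ForConPro}. Concretely, I would fix a set $C$ of fresh Henkin constants and let $S$ be the consistency property for the signature $\mathrm{L}\cup C$ whose conditions are the \emph{finite} sets $p$ of $(\mathrm{L}\cup C)$-sentences built from $\mathrm{L}_{T,\mathcal{F}}$-formulae that are Boolean satisfiable together with $T$, ordered by reverse inclusion (the consistency property associated to $T$ as in Section~\ref{sec:consprop}). The finiteness of each condition is what matters here: $\bigwedge p$, after replacing the constants by variables, is again an $\mathrm{L}_{T,\mathcal{F}}$-formula, since fragments are closed under finite conjunction. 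By Section~\ref{ForConPro} the Boolean completion $\RO(S)$ carries a Boolean valued model $\mathcal{M}$ with the mixing property such that $\mathcal{M}\models T$, whose elements are represented, up to mixing, by the classes of the constants in $C$, and whose standard dense sets force the Henkin and maximality clauses. By construction $p\le\Qp{\phi}_{\RO(S)}$ whenever $\phi\in p$, and, using Theorem~\ref{them:boolcompl} to identify syntactic consistency with Boolean satisfiability, $p\in S$ if and only if $T+\bigwedge p$ is Boolean satisfiable.

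The heart of the argument is the density of the omitting sets. For each $\Phi\in\mathcal{F}$ and each tuple $\overline{c}=(c_0,\dots,c_{n_\Phi-1})$ of constants from $C$ I would consider
\[
D_{\Phi,\overline{c}}=\bp{q\in S:\ \phi(\overline{c})\in q\text{ for some }\phi\in\Phi}
\]
and show it is dense. Given $p\in S$, let $\theta(v_0,\dots,v_{n_\theta-1})$ be the formula obtained from $\bigwedge p$ by replacing $\overline{c}$ with $v_0,\dots,v_{n_\Phi-1}$ and the remaining constants occurring in $p$ with fresh variables $v_{n_\Phi},\dots,v_{n_\theta-1}$; this keeps $\theta$ inside $\mathrm{L}_{T,\mathcal{F}}$. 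Since $p\in S$ the theory $T+\exists v_0\dots v_{n_\theta-1}\,\theta$ is Boolean satisfiable, so the non-isolation hypothesis yields some $\phi\in\Phi$ for which $T+\exists v_0\dots v_{\max\bp{n_\theta-1,n_\Phi-1}}\,[\theta\wedge\phi]$ is Boolean satisfiable. Translating existentials back into the (new) constants, this says precisely that $p\cup\bp{\phi(\overline{c})}$ is Boolean satisfiable together with $T$, i.e. $q=p\cup\bp{\phi(\overline{c})}\in S$ is a condition below $p$ lying in $D_{\Phi,\overline{c}}$. The main obstacle is exactly this step: the bookkeeping that keeps $\theta$ in the fragment $\mathrm{L}_{T,\mathcal{F}}$ (so that the hypothesis applies) and the correct matching of the constants $\overline{c}$ to the first $n_\Phi$ variables, the surplus constants being absorbed by the $\max$ in the statement.

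Finally I would read this density back as Boolean values in $\mathcal{M}$. Density of $D_{\Phi,\overline{c}}$ gives $\bigvee D_{\Phi,\overline{c}}=1_{\RO(S)}$, while each $q\in D_{\Phi,\overline{c}}$ satisfies $q\le\Qp{\phi(\overline{c})}\le\Qp{\bigvee\Phi(\overline{c})}$; hence $\Qp{\bigvee\Phi(\overline{c})}=1$ for every tuple $\overline{c}$ of constants. To pass from constants to arbitrary $\overline{a}\in M$ I would use the mixing property together with Fact~\ref{fac:pressubslambdaanyform}: writing $\overline{a}$ as a mixture of constant tuples $\overline{c}_i$ along an antichain $\bp{b_i}$ with $\bigvee_i b_i=1$, we get $b_i=b_i\wedge\Qp{\bigvee\Phi(\overline{c}_i)}\le\Qp{\bigvee\Phi(\overline{a})}$, whence $\Qp{\bigvee\Phi(\overline{a})}=1$. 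Therefore
\[
\Qp{\forall v_0\dots v_{n_\Phi-1}\,\bigvee\Phi}=\bigwedge_{\overline{a}\in M^{n_\Phi}}\Qp{\bigvee\Phi(\overline{a})}=1
\]
for each $\Phi\in\mathcal{F}$. Combined with $\mathcal{M}\models T$ this yields $\mathcal{M}\models T+\bigwedge_{\Phi\in\mathcal{F}}\forall v_0\dots v_{n_\Phi-1}\,\bigvee\Phi$, and $\mathcal{M}$ has the mixing property by construction, as required.
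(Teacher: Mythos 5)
Your proposal is correct and follows essentially the same route as the paper: a consistency property built from finite sets of $\mathrm{L}(\mathcal{C})_{T,\mathcal{F}}$-sentences Boolean satisfiable together with $T$, the non-isolation hypothesis used to verify the density of the sets realizing some $\phi\in\Phi$ at each tuple of constants (the paper packages this as clause (Ind.4) by putting the disjunctions $\bigvee\{\phi[\overline{c}]:\phi\in\Phi\}$ directly into the conditions, which is only an organizational difference), and the Model Existence Theorem of Section~\ref{ForConPro} to produce the mixing model. The one step you should make explicit is the translation of the satisfiable existential $T+\exists\overline{v}[\theta\wedge\phi]$ into an interpretation of the constants $\overline{c}$: a general Boolean valued model need not have witnesses for an existential of value $1$, so one must first pass (via the completeness theorem, Thm.~\ref{them:boolcompl}, and Prop.~\ref{prop:mixfull}) to a mixing, hence full, model before assigning the constants, exactly as the paper does.
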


\subsection{Results for $\mathrm{L}_{\infty \infty}$}

Theorem \ref{thm:complMansfield} below is due to Mansfield \cite{MansfieldConPro}.
We do not know whether any of these results hold for $\lambda$-relational signatures which are not first order.

\begin{theorem}\label{thm:complMansfield}
\cite[Thm. 1]{MansfieldConPro}
Let $\mathrm{L}$ be an $\omega$-relational signature. 
The following are equivalent for $T,S$ 
sets of $\mathrm{L}_{\infty\infty}$-formulae.
\begin{enumerate}
\item \label{thm:boolcomp2MANS}
$T\models_{\mathrm{BVM}}S$,
\item \label{thm:boolcomp3MANS}
$T\vdash S$.
\end{enumerate}
\end{theorem}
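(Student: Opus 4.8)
The plan is to prove soundness and completeness separately, the implication $\ref{thm:boolcomp3MANS}\Rightarrow\ref{thm:boolcomp2MANS}$ being the routine direction and its converse the substantial one; throughout I read a sequent $\Gamma\vdash\Delta$ as the assertion that $\Qp{\bigwedge\Gamma}^{\mathcal{M}}_\bool{B}\leq\Qp{\bigvee\Delta}^{\mathcal{M}}_\bool{B}$ holds in every $\bool{B}$-valued model $\mathcal{M}$ with $\bool{B}$ complete.

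For soundness I would induct on the length of a proof witnessing $T\vdash S$, checking that each inference rule of Section \ref{subsec:gentzencalc} preserves the above inequality, uniformly over all complete $\bool{B}$ and all $\mathcal{M}$. Each rule becomes a Boolean-algebra manipulation: the axiom rule is $\Qp{\bigwedge\Gamma}\wedge\Qp{\phi}\leq\Qp{\phi}\vee\Qp{\bigvee\Delta}$; the cut rule is transitivity together with the fact that $b\wedge a\leq c$ and $b'\leq a\vee c'$ imply $b\wedge b'\leq c\vee c'$; the structural rules (weakening, substitution) are immediate from monotonicity and the substitution lemma; the conjunction and quantifier rules follow directly from the semantic clauses of Definition \ref{def:boolvalsem}, since there the infinitary meets and joins are taken literally, with Right Quantification using the eigenvariable condition to pass to an infimum over all assignments; and the two equality rules are exactly the axioms (A) and Fact \ref{fac:pressubslambdaanyform}.

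For completeness I would argue contrapositively, building a single Boolean valued model out of the syntax. Assume $T\not\vdash S$. First expand the signature by a fresh supply of Henkin constants, iterating so that every existential formula $\exists V\,\phi$ (with $V$ a possibly infinite string of variables) acquires a matching tuple of witnesses $\bar c_{\exists V\phi}$ and the Henkin sentences $\exists V\,\phi\to\phi(\bar c_{\exists V\phi})$ become available; since $T,S$ live inside some $\mathrm{L}_{\kappa\lambda}$ this closure stays within a set and can be kept conservative over $T\cup\{\neg\bigvee S\}$. Then form the Lindenbaum--Tarski Boolean algebra $\bool{A}$ of sentences of the expanded language modulo provable equivalence relative to $T$ and the Henkin sentences, let $\bool{B}=\RO(\bool{A}^+)$, and define the term model $\mathcal{M}$ with domain the closed terms by $\Qp{R(\bar c)}^{\mathcal{M}}_{\bool{A}}=[R(\bar c)]$ and $\Qp{c=d}^{\mathcal{M}}_{\bool{A}}=[c=d]$; conditions (A) and \ref{eqn:subslambda} hold because the corresponding sequents are provable. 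This is precisely the model delivered by the model existence result of Section \ref{sec:mansmodexthm} applied to the consistency property arising from $T\not\vdash S$.

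The heart of the matter is the truth lemma $\Qp{\phi}^{\mathcal{M}}_\bool{B}=[\phi]$ for every sentence $\phi$ of the expanded language, proved by induction on $\phi$. The atomic and negation cases are immediate; the conjunction case $\Qp{\bigwedge\Phi}=\bigwedge[\phi]=[\bigwedge\Phi]$ uses that $[\bigwedge\Phi]$ is already the infimum of $\{[\phi]:\phi\in\Phi\}$ in $\bool{A}$ (via Left and Right Conjunction) together with the key fact that the completion $\RO(\bool{A}^+)$ preserves all suprema and infima existing in $\bool{A}$, being its MacNeille completion; the existential case $\Qp{\exists V\,\phi}=\bigvee_{\bar c}[\phi(\bar c)]=[\exists V\,\phi]$ uses the provable implications $\phi(\bar c)\to\exists V\,\phi$ for the lower bound and the Henkin sentence for the upper bound. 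Granting the truth lemma, $\bigwedge T$ is interpreted as $1_\bool{B}$ while, since $T\not\vdash S$, the element $\neg[\bigvee S]$ is nonzero, so $\Qp{\bigwedge T}^{\mathcal{M}}_\bool{B}\not\leq\Qp{\bigvee S}^{\mathcal{M}}_\bool{B}$ and $T\not\models_{\mathrm{BVM}}S$. I expect the main obstacle to be exactly the interaction of the infinitary connectives and quantifiers with the passage to the completion: one must guarantee that the joins and meets of the semantic clauses are computed correctly in $\bool{B}$ (secured by the MacNeille preservation property) and that enough witnesses exist for the infinite quantifier blocks of $\mathrm{L}_{\infty\infty}$ (secured by the Henkin closure). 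It is precisely this witnessing for infinite strings that cannot be upgraded to the mixing property, which is why the statement is only for $\models_{\mathrm{BVM}}$ and not for $\models_{\mathrm{Sh}}$.
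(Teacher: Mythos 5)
Your proposal is correct in outline, but it takes a genuinely different route from the paper's. The paper does not re-prove this theorem from scratch: soundness is dismissed as standard, and for the converse it runs the contrapositive exactly as in the proof of Theorem \ref{them:boolcompl} --- from $T\not\vdash S$ one forms the consistency property of finite sets $r$ of sentences (in a signature expanded by fresh constants, finitely many per formula) such that $r\cup T\not\vdash S$, verifies the clauses of Definition \ref{def:ConProInf} by single local applications of the sequent rules, and then invokes Mansfield's Model Existence Theorem \ref{ManModExi} (this substitution of \ref{ManModExi} for the mixing construction is stated explicitly in the proof of Theorem \ref{thm:craigint2}). You instead Henkinize globally, pass to the Lindenbaum--Tarski algebra $\bool{A}$ and its MacNeille completion $\RO(\bool{A}^+)$, and prove an exact truth lemma $\Qp{\phi}=[\phi]$. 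Your route buys a canonical term model and an equality where the paper only gets the one-sided estimate $L(\phi)\le\Qp{\phi}$; the paper's route buys locality: clause (Ind.5) only ever requires one fresh witness tuple for one existential formula sitting in a finite set, which is a single application of the derived left-existential rule, so the full conservativity of the Henkin closure is never needed. It also keeps the argument inside the consistency-property framework that the rest of the paper (interpolation, omitting types, the forcing equivalence) is built on.

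Two points in your sketch deserve care. First, the conservativity of the entire Henkin closure over $T\cup\{\neg\bigvee S\}$ is the real work in your approach and you have compressed it to a clause: since sequents here carry arbitrary sets of formulae and proofs are transfinite sequences, a derivation of $T\cup H\vdash S$ need not factor through any initial segment $H_\alpha$ of the Henkin hierarchy, so the limit stages of the usual induction are not automatic and one must eliminate the witnessing constants by a transfinite induction on the given proof (this is precisely the bookkeeping that consistency properties are designed to avoid). Second, your model is \emph{not} ``precisely the model delivered by'' Theorem \ref{ManModExi}: Mansfield's construction only establishes $L(\phi)\le\Qp{\phi}$, and the concluding remarks of the paper conjecture that the corresponding equality can fail for $\mathrm{L}_{\infty\infty}$; your exact truth lemma holds only because the Henkin axioms force $[\exists V\phi]$ to be the maximum of $\{[\phi(\bar c)]:\bar c\}$, which has no analogue in $\RO(\mathbb{P}_S)$. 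Neither point invalidates your argument, but the first is a genuine obligation, not a footnote.
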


\begin{theorem}[Boolean Craig Interpolation]\label{thm:craigint2}
Let $\mathrm{L}$ be an $\omega$-relational signature. 
Assume $\vDash_{\mathrm{BVM}}\phi \rightarrow \psi$ with $\phi,\psi \in \mathrm{L}_{\kappa\lambda}$. 
Then there exists a sentence $\theta$ in $\mathrm{L}_{\kappa\lambda}$ such that 

\begin{itemize}
\item $\vDash_{\mathrm{BVM}} \phi \rightarrow \theta$,
\item $\vDash_{\mathrm{BVM}} \theta \rightarrow \psi$,
\item all non logical symbols appearing in $\theta$ appear both in $\phi$ and $\psi$. 
\end{itemize}
\end{theorem}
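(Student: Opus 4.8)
The plan is to reduce the statement to a purely syntactic interpolation lemma for the Gentzen calculus of Section~\ref{subsec:gentzencalc}, using Mansfield's completeness theorem (Theorem~\ref{thm:complMansfield}) at both ends. Since $\vDash_{\mathrm{BVM}}\phi\to\psi$ is the same assertion as $\{\phi\}\vDash_{\mathrm{BVM}}\{\psi\}$, Theorem~\ref{thm:complMansfield} converts the hypothesis into a derivation $\phi\vdash\psi$. If from this derivation one can extract a sentence $\theta\in\mathrm{L}_{\kappa\lambda}$, whose non-logical symbols occur in both $\phi$ and $\psi$, together with derivations $\phi\vdash\theta$ and $\theta\vdash\psi$, then the same completeness theorem (in its soundness half) turns these back into $\vDash_{\mathrm{BVM}}\phi\to\theta$ and $\vDash_{\mathrm{BVM}}\theta\to\psi$, which is the conclusion. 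Thus all the work is in the syntactic lemma; and since the calculus involved is exactly the one for which Theorem~\ref{them:boolcompl} gives $\mathrm{Sh}$-completeness over $\mathrm{L}_{\infty\omega}$, the same lemma is what underlies Theorem~\ref{thm:craigint}, only the completeness theorem invoked at the two ends changing.

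The syntactic lemma I would prove is the infinitary form of Maehara's interpolation lemma: for every derivable sequent $\Gamma\vdash\Delta$ and every splitting $\Gamma=\Gamma_1\sqcup\Gamma_2$, $\Delta=\Delta_1\sqcup\Delta_2$, there is a formula $\theta$ all of whose non-logical symbols and free variables occur both in $\Gamma_1\cup\Delta_1$ and in $\Gamma_2\cup\Delta_2$, such that $\Gamma_1\vdash\theta,\Delta_1$ and $\theta,\Gamma_2\vdash\Delta_2$ are derivable. The proof is a transfinite induction on the (possibly infinite) derivation. Axioms and the negation, weakening, substitution and equality rules are immediate; the conjunction and quantifier rules are where the interpolant is actually built, and are also where one must check that it stays inside $\mathrm{L}_{\kappa\lambda}$. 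At a right-conjunction step with premises $\Gamma\vdash\phi_i,\Delta$ for $i\in I$, one assembles $\theta$ from the $<\kappa$ premise-interpolants $\theta_i$ by a single $\bigwedge$ or $\bigvee$, according to the side carrying the active formula; as $|I|<\kappa$ this is a legitimate $\mathrm{L}_{\kappa\lambda}$-formula. At a quantifier step one binds, by a block of $<\lambda$ quantifiers, exactly those eigenvariables that are not common to the two sides, the eigenvariable condition $(\ast)$ guaranteeing soundness of this move; since the block has length $<\lambda$ the result again lies in $\mathrm{L}_{\kappa\lambda}$. The theorem is then the instance $\Gamma_1=\{\phi\}$, $\Gamma_2=\Delta_1=\emptyset$, $\Delta_2=\{\psi\}$, for which the occurrence clause forces $\theta$ to be a sentence in the common symbols of $\phi$ and $\psi$.

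The main obstacle is the cut rule. The naive step combines the two premise-interpolants by $\wedge$ or $\vee$, but the cut-formula $\chi$ can then contribute a symbol lying on only one side of the conclusion, so that $\theta$ violates the common-symbol clause no matter how $\chi$ is assigned to the two classes; this is the familiar reason why the Craig condition needs cut-free derivations. I would therefore run the induction on cut-free proofs only, obtaining them in one of two ways. Either establish cut-elimination for the calculus of $\mathrm{L}_{\kappa\lambda}$ directly, the usual double induction on the rank of the cut-formula and the height of the derivation going through once ordinals are used to measure the infinitary conjunction and the $<\lambda$-ary quantifier rules; or, more in the spirit of this paper, check that the model existence construction of Section~\ref{sec:mansmodexthm} only ever uses cut-free unprovability, so that $\vDash_{\mathrm{BVM}}\phi\to\psi$ already yields a cut-free derivation of $\phi\vdash\psi$ and cut-elimination drops out as a by-product rather than being assumed.

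Finally, a route that bypasses the proof theory entirely is the semantic one, in the style of Keisler's consistency-property proof of interpolation. Assuming no interpolant exists, one builds a single consistency property for the union signature forcing both $\phi$ and $\neg\psi$; its defining clauses are verified precisely because the lack of an interpolant makes the consequences of the two sides in the shared signature $\mathrm{L}_\phi\cap\mathrm{L}_\psi$ mutually compatible. Feeding this consistency property into the construction of Section~\ref{sec:mansmodexthm} produces a $\bool{B}$-valued model with $\Qp{\phi}\not\leq\Qp{\psi}$, contradicting $\vDash_{\mathrm{BVM}}\phi\to\psi$. This avoids cut-elimination but shifts the difficulty into designing the amalgamating consistency property, which I expect to be the delicate point of that approach.
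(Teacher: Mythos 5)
Your third route --- the consistency-property argument --- is the one the paper actually takes: the proof of Theorem \ref{thm:craigint2} is explicitly a small twist of the proof of Theorem \ref{thm:craigint}, with Mansfield's Theorem \ref{ManModExi} replacing the mixing model existence theorem. But your sketch omits precisely the delicate points. The amalgamating consistency property consists of finite sets $s=s_1\cup s_2$ with $s_1$ drawn from the sentences in the symbols of $\phi$ (plus finitely many fresh constants), $s_2$ from those of $\psi$, subject to the condition that whenever $\theta,\sigma$ are sentences in the common signature with $\vDash_{\mathrm{BVM}}\bigwedge s_1\to\theta$ and $\vDash_{\mathrm{BVM}}\bigwedge s_2\to\sigma$, the conjunction $\theta\wedge\sigma$ is Boolean consistent; the interpolant is then read off from the failure of this clause for $\{\phi,\neg\psi\}$. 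The one clause whose verification genuinely changes in passing from Theorem \ref{thm:craigint} to Theorem \ref{thm:craigint2} is (Ind.5): for mixing models one uses fullness to realize $\exists\vec{v}\,\varphi(\vec{v})$ by a single witness, whereas for an arbitrary Boolean valued model one must take a maximal antichain $A$ with witnesses $\vec{\tau}_a$ for $a\in A$, pass to the localized model over $\bool{B}\upharpoonleft a$ to get $a\leq\Qp{\theta}$ for each $a$, and glue over $A$. That antichain argument is the actual new content of this theorem and is absent from your proposal.

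Your preferred route --- Maehara's lemma with completeness invoked at both ends --- is genuinely different from the paper's, and it founders on the point you yourself flag: cut-elimination for this calculus is nowhere established, and for $\mathrm{L}_{\infty\infty}$ it is not a routine double induction (one must control rules with $<\kappa$ premises and derivations of transfinite length, and reducing a cut on $\bigwedge\Phi$ spawns $|\Phi|$-many cuts). The proposed shortcut, that the model existence construction of Section \ref{sec:mansmodexthm} ``only uses cut-free unprovability,'' does not parse: Theorem \ref{ManModExi} is a statement about consistency properties and never mentions derivations. The bridge from $\not\vdash$ to a consistency property is built in the proof of Theorem \ref{them:boolcompl}, and verifying that this bridge survives the restriction to cut-free derivability is itself a (semantic) cut-elimination proof --- i.e., the work you were hoping to avoid. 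A further wrinkle in the Maehara induction for $\mathrm{L}_{\kappa\lambda}$: assembling $<\kappa$ premise-interpolants by $\bigwedge$ or $\bigvee$ yields a legitimate formula only if their free variables jointly number $<\lambda$, which requires an additional invariant you have not stated.
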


\begin{theorem}\label{thm:bethdef2}
$\mathrm{L}_{\infty\infty}$ has the Boolean Beth definability property.
\end{theorem}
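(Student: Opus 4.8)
The plan is to reproduce the classical derivation of Beth definability from interpolation and completeness, both of which are now available for $\mathrm{L}_{\infty\infty}$ through Theorems~\ref{thm:craigint2} and \ref{thm:complMansfield}. Fix a relational signature $\mathrm{L}$, an $\alpha$-ary symbol $R\notin\mathrm{L}$, $\mathrm{L}'=\mathrm{L}\cup\bp{R}$, and an $\mathrm{L}'_{\infty\omega}$-theory $T$ as in the definition. The direction from explicit to implicit definability is immediate: if $T\vdash\forall\overline{v}\,(R(\overline{v})\leftrightarrow\phi(\overline{v}))$ with $\phi\in\mathrm{L}_{\infty\infty}$, then by soundness (the easy half of Theorem~\ref{thm:complMansfield}) every $\bool{B}$-valued model $\mathcal{M}\vDash T$ satisfies $\Qp{R(\overline{\tau})}^{\mathcal{M}}=\Qp{\phi(\overline{\tau})}^{\mathcal{M}}$ for all $\overline{\tau}$; since $\phi$ mentions only $\mathrm{L}$-symbols, any two models of $T$ with a common domain and $\mathcal{M}\restriction\mathrm{L}=\mathcal{N}\restriction\mathrm{L}$ assign $R$ the same Boolean values. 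All the content is in the converse.

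For the converse, I would introduce a fresh $\alpha$-ary symbol $R^*$ and let $T^*$ be the $(\mathrm{L}\cup\bp{R^*})_{\infty\omega}$-theory obtained by substituting $R^*$ for $R$ throughout $T$. The first thing to check is the key translation: a $\bool{B}$-valued model $\mathcal{P}$ of $T\cup T^*$ is nothing but a pair of $\bool{B}$-valued models $\mathcal{M}=\mathcal{P}\restriction(\mathrm{L}\cup\bp{R})$ and $\mathcal{N}=\mathcal{P}\restriction(\mathrm{L}\cup\bp{R^*})$ (reading $R^*$ as $R$) of $T$ sharing the same domain and the same $\mathrm{L}$-reduct, including the Boolean values of equality, and every such matched pair arises this way. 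Hence implicit definability says exactly that $T\cup T^*\vDash_{\mathrm{BVM}}\forall\overline{v}\,(R(\overline{v})\leftrightarrow R^*(\overline{v}))$.

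Next I would add fresh constants $\overline{c}=(c_i:i<\alpha)$ and put $\chi:=\bigwedge T$, a legitimate $\mathrm{L}'_{\infty\infty}$-sentence since $T$ is a set. Interpreting the constants, the previous step gives $\vDash_{\mathrm{BVM}}(\chi(R)\wedge R(\overline{c}))\to(\chi(R^*)\to R^*(\overline{c}))$, an implication between an $(\mathrm{L}\cup\bp{R,\overline{c}})_{\infty\infty}$-sentence and an $(\mathrm{L}\cup\bp{R^*,\overline{c}})_{\infty\infty}$-sentence whose common non-logical symbols lie in $\mathrm{L}\cup\bp{\overline{c}}$. Boolean Craig interpolation (Theorem~\ref{thm:craigint2}) then produces a sentence $\theta$ over $\mathrm{L}\cup\bp{\overline{c}}$ with $\vDash_{\mathrm{BVM}}(\chi(R)\wedge R(\overline{c}))\to\theta$ and $\vDash_{\mathrm{BVM}}\theta\to(\chi(R^*)\to R^*(\overline{c}))$. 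Replacing $\overline{c}$ by variables yields $\theta(\overline{v})\in\mathrm{L}_{\infty\infty}$; renaming $R^*$ back to $R$ in the second validity (legitimate because $\theta$ contains no $R^*$ and $\vDash_{\mathrm{BVM}}$ is invariant under bijective renamings of the signature) and restricting to $\bool{B}$-valued models of $T$, where $\Qp{\chi}=1_{\bool{B}}$, I obtain $T\vDash_{\mathrm{BVM}}R(\overline{c})\to\theta(\overline{c})$ and $T\vDash_{\mathrm{BVM}}\theta(\overline{c})\to R(\overline{c})$. Generalizing the constants back to universally quantified variables --- valid because $\Qp{\forall\overline{v}\,\eta}$ is the infimum of the $\Qp{\eta(\overline{a})}$ over the domain, so validity for a fresh constant upgrades to validity of the universal closure with no fullness assumption needed --- gives $T\vDash_{\mathrm{BVM}}\forall\overline{v}\,(R(\overline{v})\leftrightarrow\theta(\overline{v}))$, and Theorem~\ref{thm:complMansfield} converts this into $T\vdash\forall\overline{v}\,(R(\overline{v})\leftrightarrow\theta(\overline{v}))$, i.e.\ explicit definability.

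I do not expect a genuinely hard step: Theorems~\ref{thm:craigint2} and \ref{thm:complMansfield} carry all the weight and the skeleton is the textbook argument (e.g.\ \cite[Thm.~6.42]{ModelsGames}). The only points demanding care are the Boolean-valued bookkeeping: verifying that models of $T\cup T^*$ correspond precisely to matched pairs of $T$-models, so that implicit definability becomes a single $\vDash_{\mathrm{BVM}}$-statement, and checking that passing to constants, renaming $R^*$ to $R$, and generalizing back to quantified variables all preserve $\vDash_{\mathrm{BVM}}$. Each holds because the Boolean value of a quantifier is computed as a supremum or infimum over the domain, exactly as in the Tarski proof.
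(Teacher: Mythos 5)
Your proposal is correct and is exactly the argument the paper intends: the paper gives no written proof of Theorem~\ref{thm:bethdef2}, relying on the remark before Theorem~\ref{thm:bethdef} that Beth definability is the standard consequence of Craig interpolation (here Theorem~\ref{thm:craigint2}) and completeness (Theorem~\ref{thm:complMansfield}), which is precisely the renaming-plus-interpolant derivation you carry out. The one step deserving an explicit extra line is the passage from implicit definability (a statement about models in which $T\cup T^*$ has Boolean value $1_{\bool{B}}$) to the global inequality $T\cup T^*\vDash_{\mathrm{BVM}}\forall\overline{v}\,(R(\overline{v})\leftrightarrow R^*(\overline{v}))$, which must hold in \emph{all} models; this follows by restricting an arbitrary model to $\bool{B}\restriction b$ with $b=\Qp{\bigwedge(T\cup T^*)}$, the same relativization trick the paper uses in the (Ind.5) clause of the proof of Theorem~\ref{thm:craigint2}.
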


As in the case of interpolation, one can prove a version of the omitting types theorem in $\mathrm{L}_{\infty \infty}$ where the obtained model is not mixing in general. Nonetheless, we do not think its proof nor the statement while introduce anything of relevance other than the information found in theorems \ref{thm:omittypthm} and \ref{thm:craigint2}.


\section{Consistency properties for relational $\omega$-signatures}\label{sec:consprop}

Consistency properties are partial approximations to the construction of a model of an infinitary theory.
In first order logic the main tool for constructing Tarski models of a theory is the compactness theorem. However, this technique is not suited for the infinitary logics $\mathrm{L}_{\kappa \lambda}$ since it fails even at the simplest case $\mathrm{L}_{\omega_1 \omega}$. Actually, a cardinal $\kappa$ is (weakly) compact if and only if the (weak) compactness theorem holds for the logic $\mathrm{L}_{\kappa \kappa}$. Thus a new recipe for constructing models is needed. This is given by the notion of consistency property.
Our aim is to show that by means of consistency properties one gets a powerful tool to produce boolean valued models of infinitary logic. 
We follow (generalizing it) the approach of Keisler's book \cite{KeislerInfLog} to consistency properties for
$\mathrm{L}_{\omega_1\omega}$.\vspace{0,5cm}
 
First of all it is convenient to reduce the satisfaction problem to formulae where negations occur only in atomic formulae. 
 We use the abbreviation $\vec{v}$ to denote a sequence of variables. 
 Similarly $\vec{c}$ denotes a string of constants.
 
\begin{definition} \label{MovNegIns} Let $\phi$ be a $\mathrm{L}_{\infty \infty}$-formula. 
We define $\phi \neg$ (moving a negation inside) by induction on the complexity of formulae:

\begin{itemize}
\item If $\phi$ is an atomic formula $\varphi$, $\phi \neg$ is $\neg \varphi$.
\item If $\phi$ is $\neg \varphi$, $\phi \neg$ is $\varphi$.
\item If $\phi$ is $\bigwedge \Phi$, $\phi \neg$ is $\bigvee \{\neg \varphi : \varphi \in \Phi\}$.
\item If $\phi$ is $\bigvee \Phi$, $\phi \neg$ is $\bigwedge \{\neg \varphi : \varphi \in \Phi\}$.
\item If $\phi$ is $\forall \vec{v} \varphi(\vec{v})$, $\phi \neg$ is $\exists \vec{v} \neg \varphi(\vec{v})$.
\item If $\phi$ is $\exists \vec{v} \varphi(\vec{v})$, $\phi \neg$ is $\forall \vec{v} \neg \varphi(\vec{v})$.
\end{itemize}
\end{definition}

It is easily checked that $\neg \phi$ and $\phi \neg$ are equivalent (under any reasonable notion of equivalence, e.g. boolean satisfiability or provability). This operation is used in the proof of Thm. \ref{ModExiThe}, Thm. \ref{GenFilThe} and Thm. \ref{ManModExi}.

%


\begin{definition} \label{def:ConProInf}
Let $\mathrm{L} = \mathcal{R} \cup \mathcal{D}$ be a relational $\omega$-signature where the relation symbols are in $\mathcal{R}$ and $\mathcal{D}$ is the set of constants.
Given an infinite set of constants $\mathcal{C}$ disjoint from $\mathcal{D}$,
consider $\mathrm{L}(\mathcal{C})$ the signature obtained by extending $\mathrm{L}$ with the constants in $\mathcal{C}$. A set 
$S$ whose elements are set sized subsets of $\mathrm{L}(\mathcal{C})_{\infty\infty}$ is a consistency property for $\mathrm{L}(\mathcal{C})_{\infty\infty}$ if for each 
$s \in S$ the following properties hold:


\begin{enumerate}
\item[(Con)]\label{conspropCon} for any $r\in S$ and any $\mathrm{L}(\mathcal{C})_{\infty\infty}$-sentence $\phi$ either $\phi\not\in r$ or $\neg\phi\not\in r$,
\item[(Ind.1)]\label{conspropInd1} if $\neg \phi \in s$, $s \cup \{\phi \neg\} \in S$,
\item[(Ind.2)]\label{conspropInd2} if $\bigwedge \Phi \in s$, then for any $\phi \in \Phi$, $s \cup \{\phi\} \in S$,
\item[(Ind.3)]\label{conspropInd3} if $\forall \vec{v} \phi(\vec{v}) \in s$, then for any $\vec{c} \in (\mathcal{C}\cup\mathcal{D})^{|\vec{v}|}$, $s \cup \{\phi(\vec{c})\} \in S$,
\item[(Ind.4)]\label{def:conspropInd4} if $\bigvee \Phi \in s$, then for some $\phi \in \Phi$, $s \cup \{\phi\} \in S$,
\item[(Ind.5)]\label{def:conspropInd5} if $\exists \vec{v} \phi(\vec{v}) \in s$, then for some $\vec{c} \in \mathcal{C}^{|\vec{v}|}$, $s \cup \{\phi(\vec{c})\} \in S$,
\item[(Str.1)]\label{def:conspropStr1} if $c,d \in \mathcal{C}\cup\mathcal{D}$ and $c = d \in s$, then $s \cup \{d = c\} \in S$,
\item[(Str.2)] \label{def:conspropStr2} if $c,d \in \mathcal{C} \cup \mathcal{D}$ and $\{c = d, \phi(d)\} \subset s$, then $s \cup \{\phi(c)\} \in S$,
\item[(Str.3)] \label{def:conspropStr3} if $d \in \mathcal{C} \cup \mathcal{D}$, then for some $c \in \mathcal{C}$, $s \cup \{c = d\} \in S$.
\end{enumerate} 
\end{definition}

The following result, due to Makkai \cite{MakkaiConPro}, shows the value of consistency properties for 
$\mathrm{L}_{\omega_1 \omega}$.

\begin{theorem}[Model Existence Theorem] \label{ModExiThe} Let $\mathrm{L}$ be a countable relational $\omega$-signature,  $\mathcal{C}$ a countable set of constants, and $S \subset [\mathrm{L}(\mathcal{C})_{\omega_1 \omega}]^{\leq \omega}$ be a consistency property of countable size.
Then any $s \in S$ is realized in some Tarski model for $\mathrm{L}$.
\end{theorem}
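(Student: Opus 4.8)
The plan is to run the classical Henkin/Hintikka-set construction, organized as a Rasiowa--Sikorski argument over the countable partial order $(S,\supseteq)$ (conditions ordered by reverse inclusion, so that passing to a larger consistent set is a strengthening). Fix the $s\in S$ to be realized. Since $\mathrm{L}$ and $\mathcal{C}$ are countable and $S$ is a countable family of countable sets, the set $\Sigma=\bigcup S$ of all formulae that ever occur in a condition is countable; moreover the closure clauses (Ind.1)--(Ind.5) and (Str.1)--(Str.3) guarantee that every witness we shall need (the formula $\phi\neg$, the conjuncts of a $\bigwedge$, the instances $\phi(\vec c)$, the witnessing constants, etc.) already lies in $\Sigma$.

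For each closure clause I would introduce the associated family of dense sets. For example, given $\bigwedge\Phi\in\Sigma$ and $\psi\in\Phi$, let $D=\{t\in S: \bigwedge\Phi\notin t \text{ or } \psi\in t\}$; this is dense by (Ind.2), since below any condition containing $\bigwedge\Phi$ one may adjoin $\psi$ and stay in $S$. Dense sets $D_{\forall\vec v\phi,\vec c}$, $D_{\bigvee\Phi}$, $D_{\exists\vec v\phi}$, $D_{\neg\phi}$ and the three equality dense sets are defined analogously, and are dense by the correspondingly numbered clause. Because quantifier strings are finite in $\mathrm{L}_{\omega_1\omega}$ and $\Sigma$ is countable, there are only countably many such dense sets; enumerate them as $(D_n)_{n<\omega}$, listing each one cofinally often. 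Now recursively build a chain $s=s_0\subseteq s_1\subseteq\cdots$ with $s_n\in S$ and $s_{n+1}\in D_n$, and put $F=\bigcup_n s_n$. Meeting each $D_n$ cofinally is what forces $F$ to be genuinely closed: if, say, $\bigwedge\Phi\in F$ then $\bigwedge\Phi\in s_k$ for some $k$, and the dense set for $(\bigwedge\Phi,\psi)$ is met at some stage $\geq k$, whence $\psi\in F$.

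From $F$ I read off a term model $\mathcal{M}$: its domain is $(\mathcal{C}\cup\mathcal{D})$ modulo $c\sim d\iff (c=d)\in F$, with $R^{\mathcal{M}}([\vec c])$ holding iff $R(\vec c)\in F$, and $d^{\mathcal{M}}=[d]$. That $\sim$ is a congruence (reflexive, symmetric, transitive, and respected by the relations) is a routine consequence of (Str.1) and (Str.2) together with (Str.3): e.g. transitivity follows from (Str.2) applied to the formula $x=e$, reflexivity from (Str.3) followed by (Str.1) and (Str.2), and well-definedness of $R^{\mathcal{M}}$ from (Str.2). The heart of the argument is the \emph{truth lemma}: for every $\phi\in\Sigma$, if $\phi\in F$ then $\mathcal{M}\vDash\phi$. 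I would prove it by induction on the ordinal rank of $\phi$ in the (well-founded) subformula tree, using (Con) in the atomic case, the closure of $F$ under (Ind.2)/(Ind.4) for $\bigwedge/\bigvee$, and (Ind.3)/(Ind.5) for $\forall/\exists$ (here (Str.3) is used to know that every element of $M$ is named by some constant, so that the universal and existential clauses of the Tarski semantics match those of the Hintikka set). Negation is handled by (Ind.1): from $\neg\phi\in F$ we obtain $\phi\neg\in F$, and since the negation-normal-form operation from $\neg\phi$ to $\phi\neg$ strictly lowers the rank while preserving the truth value of the negation, the inductive hypothesis applies. As $s\subseteq F$, the $\mathrm{L}$-reduct of $\mathcal{M}$ realizes $s$, completing the proof.

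I expect the main obstacle to be organizational rather than deep: one must choose the induction measure for the truth lemma so that the passage from $\neg\phi$ to $\phi\neg$ is well-founded (this is fine, as formulae are sets and the subformula relation is well-founded), and one must be disciplined about meeting each dense set cofinally, so that closure of $F$ actually holds even for formulae entering $F$ at a late stage. The essential use of the hypotheses is that countability of $\mathrm{L}$, $\mathcal{C}$ and $S$ collapses the task to meeting countably many dense sets, which requires no Baire category beyond an explicit recursion.
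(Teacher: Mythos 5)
Your proof is correct and is essentially the argument the paper has in mind: the paper does not prove Theorem \ref{ModExiThe} itself (it cites Makkai, following Keisler), but Section \ref{ForConPro} describes exactly your strategy --- attach to each clause of Definition \ref{def:ConProInf} a countable family of dense subsets of $(S,\supseteq)$, meet them by an explicit recursion, and read off a term model on $(\mathcal{C}\cup\mathcal{D})/\!\sim$ via a truth lemma proved by induction in which $\neg\phi$ is reduced to $\phi\neg$ --- and this is precisely what Lemma \ref{MaxFilThe} and Theorem \ref{GenFilThe} carry out in the generic-filter setting. The only point needing a bit more care than your parenthetical suggests (and which you do flag) is that $\phi\neg$ is \emph{not} a subformula of $\neg\phi$, so the induction cannot run on the bare subformula relation; one standard fix is to prove the truth claims for $\phi$ and for $\neg\phi$ simultaneously by induction on the subformula rank of $\phi$, so that the $\neg\bigwedge\Phi$ case, say, descends to $\neg\varphi$ for $\varphi\in\Phi$ of strictly smaller rank.
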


Now let us give a few examples of consistency properties for $\mathrm{L}(\mathcal{C})_{\infty\omega}$ and $\mathrm{L}(\mathcal{C})_{\infty\infty}$.

\begin{enumerate}
\item
Consider $\mathcal{K}$ a class of Tarski structures for $\mathrm{L}(\mathcal{C})$. The following families are consistency properties for $\mathrm{L}(\mathcal{C})_{\infty\infty}$: 
\begin{itemize}
\item for fixed infinite cardinals $\lambda \geq \kappa,\mu$ and $\mathcal{C}$ a set of constants of size at least $\lambda$,
\[
S_{\lambda,\kappa} = \{s \in [\mathrm{L}(\mathcal{C})_{\lambda \mu}]^{\leq \kappa} : \,\exists \mathcal{A} \in \mathcal{K},\ \mathcal{A} \vDash \bigwedge s\},
\]
\item $S_{\lambda,< \omega} = \{s \in [\mathrm{L}(\mathcal{C})_{\lambda \mu}]^{< \omega} : \,\exists  \mathcal{A} \in \mathcal{K},\ \mathcal{A} \vDash \bigwedge s\}$,
\item $S_{\lambda, \kappa}$ and $S_{\lambda,< \omega}$ where only a finite number of constants from $\mathcal{C}$ appear in each $s \in S$.  
\end{itemize}
 
\item\label{exm:BvalmodConsProp}
Let $\mathcal{M}$ be a $\bool{B}$-valued model with domain $M$ for a signature $\mathrm{L}=\mathcal{R}\cup\mathcal{D}$. We let $\mathcal{C}=M$ and $S$ be the set of  finite (less than $\kappa$-sized,\dots) sets $r$ of $\mathrm{L}(M)_{\kappa\lambda}$-sentences such that 
\[
\Qp{\bigwedge r}^{\mathcal{M}}_{\bool{B}}>0_{\bool{B}}.
\]
Then $S$ is a consistency property.
 
\item
The following families are consistency properties for $\mathrm{L}(\mathcal{C})_{\infty\omega}$:
\begin{itemize}
\item Any of the previous cases where the Tarski structures in $\mathcal{K}$ may exist only in some generic extension
of $V$: e.g. given a $\mathrm{L}_{\infty\omega}$-theory $T$, $T$ may not be consistent in $V$ with respect to the Tarski semantics for 
$\mathrm{L}_{\infty\omega}$,
but  $T$ may become consistent with respect to the Tarski semantics for 
$\mathrm{L}_{\infty\omega}$ in some generic extension of $V$; one can then use the forcible properties of the Tarski models of $T$ existing in some generic extension of $V$ to define a consistency property in $V$. 
\end{itemize}
\end{enumerate}
 The last example is based on the following observation:
let $S$ be a consistency property for $\mathrm{L}_{\kappa^+ \omega}$ of size $\kappa$ 
whose elements are all sets of formulae of size at most $\kappa$ existing in $V$.
Let $G$ be a $V$-generic filter for the forcing $\Coll(\omega,\kappa)$. Then, in the generic extension $V[G]$, $S$ becomes a consistency property of countable size for $\mathrm{L}_{\omega_1^{V[G]} \omega}$ and the Model Existence Theorem \ref{ModExiThe} applied in $V[G]$ provides the desired Tarski model of any $s\in S$. 

\begin{definition}
    Suppose $\kappa$ is an infinite cardinal and let $\mathrm{L}$ be a signature. A fragment $\mathrm{L}_\mathcal{A} \subset \mathrm{L}_{\kappa \omega}$ consists in a set of $\mathrm{L}_{\kappa \omega}$-formulas such that:
    \begin{itemize}
        \item $\mathrm{L}_\mathcal{A}$ is closed under $\neg$, $\wedge$ and $\vee$,
        \item if $\phi\in \mathrm{L}_\mathcal{A}$ and $v$ is a variable appearing in some $\mathrm{L}_\mathcal{A}$-formula, $\forall v \phi$ and $\exists v \phi$ belong to $\mathrm{L}_\mathcal{A}$,
        \item $\mathrm{L}_\mathcal{A}$ is closed under subformulas,
        \item if $\phi \in \mathrm{L}_\mathcal{A}$, then $\phi \neg \in \mathrm{L}_\mathcal{A}$,
        \item if $\phi \in \mathrm{L}_\mathcal{A}$, then there is a variable appearing in $\mathrm{L}_\mathcal{A}$ which does not occur in $\phi$,
        \item if $\phi(v) \in \mathrm{L}_\mathcal{A}$ and $t$ is any $\mathrm{L}$-term, $\phi(t) \in \mathrm{L}_\mathcal{A}$,
        \item if $\phi(v_1,\ldots,v_n) \in \mathrm{L}_\mathcal{A}$ and $w_1,\ldots,w_n$ are variable appearing in $\mathrm{L}_\mathcal{A}$, $\phi(w_1,\ldots,w_n) \in \mathrm{L}_\mathcal{A}$.
    \end{itemize}    
\end{definition}

\begin{remark} 
    Suppose $\kappa$ is an infinite cardinal and let $\mathrm{L}$ be a signature. Let $T$ be a set of $\mathrm{L}_{\kappa \omega}$-formulae. Then there exists a smallest fragment $\mathrm{L}_\mathcal{A}$ such that $T \subset \mathrm{L}_\mathcal{A}$ and 
    \[
    |\mathrm{L}_\mathcal{A}| = |L| + |T| + \kappa. 
    \]
\end{remark}

\section{Forcing with consistency properties} \label{ForConPro}

In this section we assume that $\mathrm{L}$ denotes a set-sized $\omega$-relational signature,  $\mathcal{C}$ is a set of fresh constants, and $S \subset \mathcal{P}(\mathrm{L}(\mathcal{C})_{\infty \omega})$ is a set-sized consistency property. 

We start by noting the following:

\begin{remark} \label{S-PS} If $S$ is a consistency property, so is 
$\{s \subset \mathrm{L}(\mathcal{C})_{\infty \omega} : \exists s_0 \in S\, s \subseteq s_0\}$.
\end{remark}

\begin{definition} Let $S$ be a consistency property over $\mathrm{L}(\mathcal{C})_{\infty\omega}$ for a set of constants $\mathcal{C}$ and a relational $\omega$-signature $\mathrm{L}$. 
The forcing notion $\mathbb{P}_S$ is given by:

\begin{itemize}
\item domain: $\{s \subset \mathrm{L}(\mathcal{C})_{\infty \omega} : \exists s_0 \in S \,(s \subseteq s_0)\}$;
\item order: $p \leq q$ if and only if $q \subseteq p$.
\end{itemize}

Given a filter $F$ on $\mathbb{P}_S$, $\Sigma_F = \bigcup F$. 
\end{definition}

The proof of the Model Existence Theorem for $\mathrm{L}_{\omega_1 \omega}$ as given in \cite{KeislerInfLog}, corresponds naturally to the construction for a given consistency property $S$ of a suitable filter $G$ on $\mathbb{P}_S$ generic over countably many dense sets. 
The clauses of a consistency property are naturally attached to dense sets a maximal filter $G$ on $\mathbb{P}_S$ needs to meet in order to produce a Tarski model of the formulae $\phi\in \bigcup G$.
For example, suppose $\bigvee \Phi \in s_0 \in S$. Clause \ref{def:conspropInd4} together with Remark \ref{S-PS} states that the set $\{s \in S: \,\Phi\cap s\neq\emptyset\}$ is dense below $s_0$. In Keisler's case the elements of a consistency property are countable and each $\mathrm{L}(\mathcal{C})_{\omega_1 \omega}$-formula has countably many subformulae. So, one can take an enumeration of all the dense sets at issue and diagonalize.
In the general case for $\mathrm{L}_{\infty \omega}$ one deals with many more dense sets, hence a filter meeting all the relevant dense sets may not exists. However we can translate Keisler's argument using forcing and produce a Boolean valued model for the associated consistency property.
%

For the rest of this section we work with consistency properties made up from finite sets of sentences. The reader familiar with Keisler's book \cite{KeislerInfLog} will find this restriction natural.

We split our generalization of Keisler's result in two pieces. The first piece shows how far one can go in proving the Model Existence Theorem assuming only the existence of a maximal filter on a consistency property $S$. The second one shows how genericity fills the missing gaps. 

\begin{fact} Let $S$ be a consistency property for $\mathrm{L}(\mathcal{C})_{\infty\omega}$ for a set of constants $\mathcal{C}$. Assume $S$ consists only of finite sets of formulae.
Suppose $F \subseteq \mathbb{P}_S$ is a filter. Then $[\Sigma_F]^{<\omega}=F$.
\end{fact}

\begin{proof}
The inclusion $F \subset [\Sigma_F]^{< \omega}$ follows by definition of $\Sigma_F$. We now prove $[\Sigma_F]^{< \omega} \subseteq F$. Suppose $p=\bp{\phi_1,\dots,\phi_n} \in [\Sigma_F]^{< \omega}$. Then there exist $s_1,\ldots,s_n \in F$ with each $\phi_i\in s_i$. Hence $p \subseteq \bigcup_{i \leq n} s_i$. Since $F$ is a filter, we have $\bigcup_{i \leq n} s_i \in F \subseteq \mathbb{P}_S$. The set $p$ is a condition in $\mathbb{P}_S$ since $\mathbb{P}_S$ is closed under subsets. Finally, $\bigcup_{i \leq n} s_i \leq p$ and $\bigcup_{i \leq n} s_i \in F$ imply $p \in F$. 
\end{proof}

\begin{definition} \label{DefStr}
Given a relational $\omega$-signature $\mathrm{L}=\mathcal{R}\cup\mathcal{D}$,  a set of fresh constants $\mathcal{C}$,  and a consistency property $S$ for $\mathrm{L}(\mathcal{C})_{\infty\omega}$, let $F$ be
 a maximal filter for $\mathbb{P}_S$. 
 
$\mathcal{A}_F=(A_F,R_F:R\in\mathcal{R}, d_F:d\in\mathcal{D})$ is the following string of objects:
\begin{itemize}
\item
$A_F$ is the set of equivalence classes on $\mathcal{C}\cup\mathcal{D}$ for the equivalence relation
$c\cong_F d$ if and only if $(c=d)\in \Sigma_F$,
\item for $R \in \mathcal{D}$ $n$-ary relation symbol and $c_1,\ldots,c_n \in \mathcal{C} \cup \mathcal{D}$, $R_F([c_1]_F,\dots,[c_n]_F)$ holds if and only if $R(c_1,\dots,c_n)\in \Sigma_F$,
\item $d_F=[d]_F$ for any $d\in\mathcal{D}\cup\mathcal{C}$.
\end{itemize}
\end{definition}

Consistency properties are so designed that $\mathcal{A}_F$ is a Tarski structure for $\mathrm{L}(\mathcal{C})$:
\begin{fact}\label{fac:TarStrAF}
Let $\mathrm{L}=\mathcal{R}\cup\mathcal{D}$ be a relational $\omega$-signature, 
$\mathcal{C}$ a fresh set of constants,
$S$ a consistency property for $\mathrm{L}(\mathcal{C})_{\infty\omega}$, $F$ a maximal filter for $\mathbb{P}_S$. Then $\mathcal{A}_F$ is a Tarski structure for $\mathrm{L}(\mathcal{C})$.
\end{fact}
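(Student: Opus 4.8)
The plan is to verify directly that the proposed structure $\mathcal{A}_F$ satisfies all the requirements of being a Tarski $\mathrm{L}(\mathcal{C})$-structure, with the crucial point being that $\cong_F$ is genuinely an equivalence relation, that the interpretations $R_F$ and $d_F$ are well defined on equivalence classes, and that the domain $A_F$ is nonempty. Each verification should reduce to a clause of the consistency property combined with maximality of the filter $F$ (which, via the preceding Fact, lets us pass freely between ``$\phi \in \Sigma_F$'' and ``$\{\phi\} \in F$'', and lets us conclude $\phi \in \Sigma_F$ whenever density forces it).

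First I would record the basic bridge lemma: since $F$ is a \emph{maximal} filter on $\mathbb{P}_S$ and $S$ consists of finite sets, for any sentence $\phi$ that lies in some condition of $S$ compatible with $F$, density arguments driven by the (Con) and (Str)/(Ind) clauses put the relevant witnessing formula into $\Sigma_F$. Concretely, maximality means $F$ meets every dense set of the form described after Remark \ref{S-PS}. I would then check the equivalence relation axioms for $\cong_F$: reflexivity uses clause (Str.3) (every $d$ is forced to be equal to some constant, and in particular density yields $(d=d)\in\Sigma_F$ after applying symmetry and transitivity, or more directly one argues $d\cong_F d$ from the structural clauses); symmetry is exactly clause (Str.1); and transitivity follows from clause (Str.2) applied with $\phi(x)$ taken to be the atomic formula $x=e$, so that $c=d$ and $d=e$ in $\Sigma_F$ yield $c=e$ in $\Sigma_F$ after invoking density and maximality.

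Next I would check that the relations are well defined on classes, which is again clause (Str.2): if $R(c_1,\dots,c_n)\in\Sigma_F$ and $c_i\cong_F c_i'$ for each $i$, then repeatedly applying (Str.2) with $\phi$ the atomic formula $R(\dots)$ (substituting one coordinate at a time) shows $R(c_1',\dots,c_n')\in\Sigma_F$; this uses that substitution of equals is exactly what (Str.2) encodes, together with symmetry of $\cong_F$ to align the direction of the equality. The interpretation $d_F=[d]_F$ of constant symbols is well defined by construction, and $A_F$ is nonempty because $\mathcal{C}\cup\mathcal{D}\neq\emptyset$ (indeed $\mathcal{C}$ is infinite) and (Str.3) guarantees at least one class exists.

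The main obstacle, and the step I would spend the most care on, is the interplay between \emph{maximality} of $F$ and the closure conditions: each structural clause only tells us that a certain one-step extension \emph{belongs to} $S$, i.e.\ defines a dense set below the relevant condition, and it is maximality that actually forces $F$ to meet that dense set and hence places the witness into $\Sigma_F$. I would therefore be explicit, in each of the three verifications above, about which dense set is being used and why it is dense below a condition already in $F$ (invoking Remark \ref{S-PS} to ensure $S$ is closed under subsets so that the one-step extensions remain conditions). Everything else is routine once this pattern is isolated, so I would prove the bridge lemma once and then apply it uniformly to (Str.1), (Str.2), and (Str.3).
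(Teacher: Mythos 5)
Your plan is the same as the paper's in its core step: the paper's proof of Fact~\ref{fac:TarStrAF} checks only that $R_F$ (and $A_F$) is independent of representatives, by combining the substitution clause with (Con) and the maximality of $F$; your additional verifications (that $\cong_F$ is reflexive, symmetric and transitive, that $A_F\neq\emptyset$) are exactly the parts the paper leaves implicit, and they do go through by the same pattern. So the decomposition and the choice of clauses ((Str.1) for symmetry, (Str.2) for transitivity and for substitution into $R$, (Str.3) for reflexivity) are all correct.

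The one genuine problem is your ``bridge lemma'' as stated: \emph{``maximality means $F$ meets every dense set of the form described after Remark~\ref{S-PS}''}, repeated later as \emph{``it is maximality that actually forces $F$ to meet that dense set.''} This is false as a general principle, and the paper is organized around precisely this failure: Lemma~\ref{MaxFilThe} isolates what a merely maximal filter gives you, while Theorem~\ref{GenFilThe} requires genericity exactly because a maximal filter need not meet the dense sets $D_{\bigvee\Phi}$, $D_{\exists\vec v\,\phi}$, $D_d$ attached to (Ind.4), (Ind.5), (Str.3) (see the discussion following Thm.~\ref{thm:mainthmAF}). What maximality actually gives is: if a \emph{single} condition $q\in\mathbb{P}_S$ is compatible with every element of $F$, then $q\in F$. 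Every instance you need is of this benign form --- the ``dense set'' is $\{t:\psi\in t\}$ for one fixed sentence $\psi$ (namely $d=c$, or $c=e$, or $R(d_1,\dots,d_n)$, or $d=d$) such that the structural clauses yield $p\cup\{\psi\}\in\mathbb{P}_S$ for every $p$ extending a condition already in $F$; hence $\{\psi\}$ is compatible with all of $F$ and maximality puts it in. This is the mechanism in the paper's own proof (its observation that no $p\in\mathbb{P}_S$ can contain the equalities together with $\neg R(d_1,\dots,d_n)$). You should state the bridge lemma in this compatibility form; as written it would equally ``prove'' that $F$ meets $D_{\bigvee\Phi}$, which the paper shows can fail for maximal non-generic filters. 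The point matters concretely for your use of (Str.3): since its witness $c$ is existential and may vary with the condition being extended, you cannot conclude that some particular $c=d$ lands in $\Sigma_F$; you can only use the detour through \emph{some} $c$ (plus (Str.1) and (Str.2)) to show that every $p\in\mathbb{P}_S$ is compatible with $\{d=d\}$, which is what reflexivity actually needs.
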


\begin{proof}
We need to check that the definition of $A_F$ and of $R_F$ does not depend on the chosen representatives $c_1,\ldots,c_n$. Suppose $c_1 = d_1, \ldots, c_n = d_n, R(c_1 \ldots c_n)\in \Sigma_F$.
By the previous Fact $\{c_1 = d_1, \ldots, c_n = d_n, R(c_1 \ldots c_n)\} \in F$. Hence by Clause
\ref{def:ConProInf}(Ind2) for any $p\supseteq \{c_1 = d_1, \ldots, c_n = d_n, R(c_1 \ldots c_n)\}$ in 
$\mathbb{P}_S$,
$p\cup\bp{R(d_1,\dots,d_n)}\in \mathbb{P}_S$. This combined with Clause \ref{def:ConProInf}(Con) gives that no $p\in \mathbb{P}_S$ can contain
$ \{c_1 = d_1, \ldots, c_n = d_n, R(c_1 \ldots c_n), \neg R(d_1,\dots,d_n)\}$.
Hence by maximality  of $F$, 
\[
\{c_1 = d_1, \ldots, c_n = d_n, R(c_1 \ldots c_n), R(d_1,\dots,d_n)\}\in F
\] must be the case.
\end{proof}

\begin{lemma} \label{MaxFilThe} 
Let $\mathrm{L}$ be a relational $\omega$-signature and $\mathcal{C}$ an infinite set of constants disjoint from $\mathrm{L}$. 
Assume $S \subset [\mathrm{L}(\mathcal{C})_{\infty \infty}]^{< \omega}$ is a consistency property.
Let $F \subseteq \mathbb{P}_S$ be a maximal filter on $\mathbb{P}_S$. 
Consider $\Sigma'_F \subset \Sigma_F$ the set of (quantifier free) formulae $\psi\in\Sigma_F$ which are either atomic, negated atomic, or such that any subformula of $\psi$ which is neither atomic nor negated atomic contains just the logical constant $\bigwedge$. 
Then $\mathcal{A}_F \vDash \Sigma'_F$.
\end{lemma}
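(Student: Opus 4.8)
The plan is to prove $\mathcal{A}_F \vDash \psi$ for every $\psi \in \Sigma'_F$ by induction on the complexity of $\psi$, exploiting that the formulae in $\Sigma'_F$ are exactly those built from literals (atomic and negated atomic sentences) using only the connective $\bigwedge$. Throughout I would rely on Fact \ref{fac:TarStrAF}, which guarantees that $\mathcal{A}_F$ is a genuine Tarski structure for $\mathrm{L}(\mathcal{C})$ (so that $R_F$ is well defined on equivalence classes), and on the earlier Fact that $[\Sigma_F]^{<\omega}=F$, which lets me translate membership of a finite set of formulae in $\Sigma_F$ into membership of a condition in the filter $F \subseteq \mathbb{P}_S$.

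For the base cases I would read the literals directly off the definition of $\mathcal{A}_F$ in Definition \ref{DefStr}. If $\psi$ is an atomic sentence $R(c_1,\dots,c_n)$ (respectively $c=d$) lying in $\Sigma_F$, then by construction $R_F([c_1]_F,\dots,[c_n]_F)$ holds (respectively $[c]_F=[d]_F$), i.e. $\mathcal{A}_F\vDash\psi$. If $\psi$ is a negated atomic sentence $\neg\varphi\in\Sigma_F$, I first observe that $\varphi\notin\Sigma_F$: otherwise $\{\varphi,\neg\varphi\}\in[\Sigma_F]^{<\omega}=F\subseteq\mathbb{P}_S$ would be contained in some $s_0\in S$, contradicting clause (Con) of Definition \ref{def:ConProInf}. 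Since $\varphi\notin\Sigma_F$, the corresponding atomic sentence is false in $\mathcal{A}_F$ (again reading off the definition of $R_F$ and of the equivalence classes), so $\mathcal{A}_F\vDash\neg\varphi$.

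The inductive step, which is the crux of the lemma, concerns $\psi=\bigwedge\Phi\in\Sigma'_F$. Here I must show that each $\phi\in\Phi$ again lies in $\Sigma_F$; it automatically inherits the structural condition defining $\Sigma'_F$, being a subformula of $\psi$, so the induction hypothesis will apply once membership is established. Since $\bigwedge\Phi\in\Sigma_F=\bigcup F$, fix $s\in F$ with $\bigwedge\Phi\in s$. I would claim that $s\cup\{\phi\}$ is compatible with every $t\in F$: given such $t$, directedness of $F$ yields $u\in F$ with $s,t\subseteq u$; as $\bigwedge\Phi\in s\subseteq u$ and the domain of $\mathbb{P}_S$ is itself a consistency property by Remark \ref{S-PS}, clause (Ind.2) of Definition \ref{def:ConProInf} gives $u\cup\{\phi\}\in\mathbb{P}_S$, and this condition refines both $s\cup\{\phi\}$ and $t$. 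By maximality of $F$ (as used in the proof of Fact \ref{fac:TarStrAF}), a condition compatible with all of $F$ must belong to $F$, so $s\cup\{\phi\}\in F$ and hence $\phi\in\Sigma_F$. The induction hypothesis then yields $\mathcal{A}_F\vDash\phi$ for every $\phi\in\Phi$, whence $\mathcal{A}_F\vDash\bigwedge\Phi$.

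The hard part will be precisely this $\bigwedge$-case: passing from $\bigwedge\Phi\in\Sigma_F$ to $\phi\in\Sigma_F$ for each conjunct is not immediate, since a maximal filter need not be generic. The compatibility computation above, combined with the characterisation of maximal filters as those containing every condition they are compatible with, is exactly what replaces genericity here. Note it is essential that $S$ (equivalently $\mathbb{P}_S$) consists of finite sets, so that the unions $u\cup\{\phi\}$ remain inside $\mathbb{P}_S$ and clause (Ind.2) can be invoked at the level of arbitrary conditions rather than only of elements of $S$.
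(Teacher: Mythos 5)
Your proof is correct and follows essentially the same route as the paper's: induction on complexity, with the atomic and negated-atomic cases read off Definition \ref{DefStr} together with clause (Con), and the $\bigwedge$-step handled by combining clause (Ind.2) with the maximality of $F$ to conclude that each conjunct of $\bigwedge\Phi$ already lies in $\Sigma_F$. The only cosmetic difference is that the paper packages the maximality step as a small auxiliary claim about every $q\in\mathbb{P}_S$ containing $\bigwedge\Phi$, whereas you argue directly that $s\cup\{\phi\}$ is compatible with every element of $F$ (correctly noting that finiteness of the conditions is what makes this suffice); the two arguments are the same.
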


\begin{proof}
%
%

We do it by induction on the complexity of $\psi \in \Sigma'_F$. 
First note that $\mathbb{P}_S$ is a consistency property of which $S$ is a dense subset.
The atomic case follows by Def. \ref{DefStr}. For the remaining inductive clauses we proceed as follows:

\begin{description}
\item[$\neg$] 
Suppose $\psi = \neg \phi \in \Sigma'_F$ with $\phi$ an atomic formula. Let's see that 
\[
\mathcal{A}_F \nvDash \phi.
\]
Since $\phi$ is atomic it is enough to check $\phi \notin \Sigma'_F$. Suppose otherwise. Then there exists $p \in F$ with $\phi \in p$. Also $\psi \in q$ for some $q \in F$. By compatibility of filters there exists $r \leq p,q$. But $\phi, \neg \phi \in r$ contradicts clause \ref{def:ConProInf}(Con) for $\mathbb{P}_S$. Therefore
\[ 
\mathcal{A}_F \vDash \psi.
\]
\item[$\bigwedge$] 
Suppose $\psi = \bigwedge \Phi$ is in $\Sigma_F'$. One needs to check 
\[
\mathcal{A}_F \vDash \phi
\]
for any $\phi \in \Phi$. Fix such a $\phi\in\Phi$. We start by showing that if $\bigwedge \Phi \in \Sigma'_F$, $\phi$ is also in $\Sigma'_F$. It is enough to check $\phi \in \Sigma_F$, and then apply the inductive assumptions on $\phi\in\Sigma'_F$, to get that $\mathcal{A}_F\models\phi$. 
Towards this aim we note the following:
\begin{quote}
For any $q\in \mathbb{P}_S$ with $\bigwedge\Phi\in q$,
$q\cup\bp{\phi}\in\mathbb{P}_S$, while $q\cup\mathbb{\neg\phi}\not\in\mathbb{P}_S$.
\end{quote}
\begin{proof}
Take $q$ in $\mathbb{P}_S$ with $\bigwedge \Phi \in q$.  
By Clause \ref{def:ConProInf}(Ind.2), $q \cup \{\phi\} \in \mathbb{P}_S$.
Assume now that $\neg\phi\in q$. Then $q \cup \{\phi\} \in \mathbb{P}_S$ would contradict
Clause \ref{def:ConProInf}(Ind.2) for $\mathbb{P}_S$. The thesis follows.
\end{proof}
By maximality of $F$ if some $q\in F$ is such that $\bigwedge\Phi\in q$, then 
$q\cup\bp{\phi}\in F$ as well, yielding that $\phi\in \Sigma_F$ as was to be shown.

\end{description}   
\end{proof}

\begin{theorem} \label{GenFilThe} 
Let $\mathrm{L}$ be a relational $\omega$-signature, $\mathcal{C}$ an infinite set of constants disjoint from $\mathrm{L}$, and $S$ be a consistency property consisting of  $\mathrm{L}(\mathcal{C})_{\infty \omega}$-sentences.
 
Assume that $F$ is a $V$-generic filter for $\mathbb{P}_S$. Then in $V[F]$ it holds that:
\begin{enumerate}
\item \label{GenFilThe-1}
The domain of 
$\mathcal{A}_F$ is exactly given by $\bp{[c]_F: c\in\mathcal{C}}$.
\item \label{GenFilThe-2}
For any $\mathrm{L}(\mathcal{C})_{\infty\omega}$-sentence $\psi$
\[
\mathcal{A}_F \vDash \psi \text{ if } \psi \in \Sigma_F.
\]
\end{enumerate}
\end{theorem}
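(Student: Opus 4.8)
The plan is to prove the two assertions in order, since the universal case of \eqref{GenFilThe-2} will feed on \eqref{GenFilThe-1}. Since a $V$-generic filter is in particular maximal (it meets, for each $p$, the dense set of conditions that either refine $p$ or are incompatible with it), Fact \ref{fac:TarStrAF} guarantees that $\mathcal{A}_F$ is a genuine Tarski $\mathrm{L}(\mathcal{C})$-structure in $V[F]$ and Lemma \ref{MaxFilThe} is at our disposal. The single recurring mechanism I will exploit is this: whenever a clause of Definition \ref{def:ConProInf} asserts that $s\cup\{\chi\}\in S$ for \emph{some} $\chi$ drawn from a prescribed set, the collection of conditions containing such a $\chi$ is dense below any $p$ holding the triggering formula, so genericity drives a witness into $\Sigma_F=\bigcup F$. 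This is exactly where $V$-genericity does work that mere maximality cannot, namely for the ``disjunctive'' clauses (Ind.4) and (Ind.5), which only promise \emph{one} good extension.

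For \eqref{GenFilThe-1}, the domain of $\mathcal{A}_F$ is $\{[x]_F: x\in\mathcal{C}\cup\mathcal{D}\}$. Fix $d\in\mathcal{C}\cup\mathcal{D}$ and set $D_d=\{p\in\mathbb{P}_S:\exists c\in\mathcal{C}\ (c=d)\in p\}$. Given $p\in\mathbb{P}_S$ choose $s_0\in S$ with $p\subseteq s_0$; clause (Str.3) yields $c\in\mathcal{C}$ with $s_0\cup\{c=d\}\in S$, so $p\cup\{c=d\}\in\mathbb{P}_S$ refines $p$ and lies in $D_d$, proving $D_d$ dense. By genericity $(c=d)\in\Sigma_F$ for some $c\in\mathcal{C}$, whence $[d]_F=[c]_F$; thus every class is named by a constant of $\mathcal{C}$.

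For \eqref{GenFilThe-2} I argue by induction on the logical depth $d(\psi)$ (atomic formulae at depth $0$, each connective or quantifier block adding one). The atomic case is the biconditional of Definition \ref{DefStr}, so at the atomic level $\mathcal{A}_F\vDash\phi$ iff $\phi\in\Sigma_F$; consequently, for $\neg\phi$ with $\phi$ atomic, if both $\phi,\neg\phi$ lay in $\Sigma_F$ a common refinement in $F$ would contradict (Con), so $\phi\notin\Sigma_F$ and $\mathcal{A}_F\vDash\neg\phi$. The conjunction case $\bigwedge\Phi\in\Sigma_F$ is handled as in the $\bigwedge$-clause of Lemma \ref{MaxFilThe}: by (Ind.2) the set $\{p:\phi\in p\}$ is dense below any condition containing $\bigwedge\Phi$, so $\phi\in\Sigma_F$ for each $\phi\in\Phi$ and the induction hypothesis applies. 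The universal case $\forall\vec v\phi\in\Sigma_F$ uses (Ind.3) to place $\phi(\vec c)\in\Sigma_F$, hence $\mathcal{A}_F\vDash\phi([\vec c]_F)$, for every finite tuple $\vec c$ from $\mathcal{C}\cup\mathcal{D}$; here \eqref{GenFilThe-1} is essential, since it ensures every domain element is some $[\vec c]_F$, so satisfaction at all constant tuples is genuine universal satisfaction. Finally, for $\bigvee\Phi\in\Sigma_F$ and $\exists\vec v\phi\in\Sigma_F$, clauses (Ind.4) and (Ind.5) make $\{p:\exists\phi\in\Phi\ \phi\in p\}$ and $\{p:\exists\vec c\in\mathcal{C}^{|\vec v|}\ \phi(\vec c)\in p\}$ dense below the relevant condition, so genericity delivers a witness in $\Sigma_F$ and the induction hypothesis closes the case.

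The delicate point, and the step I expect to be the main obstacle, is the negation case for non-atomic $\phi$. From $\neg\phi\in\Sigma_F$, clause (Ind.1) together with genericity gives $\phi\neg\in\Sigma_F$, where $\phi\neg$ is the formula of Definition \ref{MovNegIns} obtained by driving the negation one layer inward; as $\phi\neg$ is logically equivalent to $\neg\phi$ in any Tarski structure, it suffices to establish $\mathcal{A}_F\vDash\phi\neg$. The subtlety is that $d(\phi\neg)$ need not drop below $d(\neg\phi)$ (for $\phi=\bigwedge\Phi$ the two depths coincide), so one cannot naively apply the induction hypothesis to $\phi\neg$. The remedy is to immediately process $\phi\neg$ by its \emph{new} outermost connective: each resulting immediate subformula ($\varphi$ in the double-negation case, $\neg\varphi$ for $\varphi\in\Phi$ in the $\bigwedge/\bigvee$ cases, and $\neg\chi(\vec c)$ in the quantifier cases) has logical depth strictly below $d(\neg\phi)$, so the induction hypothesis applies to whichever witness genericity or the density argument above produces. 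Verifying that this strict drop holds uniformly across the six forms of $\phi$ is the only genuinely technical check; every other case is a mechanical pairing of a consistency-property clause with the same density-plus-genericity argument.
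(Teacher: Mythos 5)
Your proposal is correct and follows essentially the same route as the paper's proof: (Str.3)-density plus genericity for the domain claim, density-plus-genericity arguments for the (Ind.4)/(Ind.5) clauses, and the $\phi\mapsto\phi\neg$ reduction for negations, with your explicit depth-drop check supplying the well-foundedness of the induction that the paper leaves implicit behind the phrase ``for which cases the proof has already been given.'' The one point the paper makes more explicit is that the $\forall$ case rests on $(\mathcal{C}\cup\mathcal{D})^{<\omega}$ being computed the same way in $V$ and in $V[F]$ --- the absoluteness of finite tuples, which is precisely why the theorem is confined to $\mathrm{L}_{\infty\omega}$; your claim that ``satisfaction at all constant tuples is genuine universal satisfaction'' is correct but silently uses this.
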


Note the following apparently trivial corollary of the above Theorem:
\begin{corollary}\label{cor:consS}
Assume $S$ is a consistency property on $\mathrm{L}(\mathcal{C})_{\infty\omega}$  satisfying the assumptions of Thm. \ref{GenFilThe}. Then for any 
$s\in S$ $s\not\vdash\emptyset$.
%
\end{corollary}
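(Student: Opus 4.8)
The plan is to argue by contradiction, using the generic Tarski model provided by Theorem~\ref{GenFilThe} together with the \emph{soundness} of the sequent calculus with respect to Tarski semantics. Note that, although the Remark following the calculus points out that \emph{completeness} for Tarski semantics fails, soundness does not: every inference rule (including the infinitary Right Conjunction rule, whose soundness one verifies by the evident transfinite induction on the length of a derivation) preserves validity in every Tarski structure. In particular, if $s\vdash\emptyset$, then no Tarski structure satisfies $\bigwedge s$, since the empty succedent corresponds to the false (empty) disjunction.

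So suppose toward a contradiction that $s\vdash\emptyset$ for some $s\in S$. First I would pass to a forcing extension in which $s$ is realized: letting $F$ be a $V$-generic filter for $\mathbb{P}_S$ obtained by forcing with the conditions of $\mathbb{P}_S$ lying below $s$, one gets a $V$-generic filter for $\mathbb{P}_S$ with $s\in F$, whence $s\subseteq\bigcup F=\Sigma_F$. By Theorem~\ref{GenFilThe} applied in $V[F]$, $\mathcal{A}_F$ is a Tarski structure for $\mathrm{L}(\mathcal{C})$ with $\mathcal{A}_F\vDash\psi$ for every $\psi\in\Sigma_F$; in particular $\mathcal{A}_F\vDash\bigwedge s$, i.e. $\mathcal{A}_F$ is a Tarski model of $s$.

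It remains to transport the hypothesis $s\vdash\emptyset$ into $V[F]$ and collide it with this model. The derivation witnessing $s\vdash\emptyset$ is a set in $V\subseteq V[F]$, and ``being a proof'' is an absolute (indeed $\Sigma_1$, hence forcing-invariant) property of that set, as recorded in the Remark on forcing invariance of $\vdash$; thus $s\vdash\emptyset$ still holds in $V[F]$. By soundness applied in $V[F]$, no Tarski structure there can satisfy $\bigwedge s$, contradicting the existence of $\mathcal{A}_F$. This contradiction is derived inside the forcing extension, so the assumption $s\vdash\emptyset$ cannot hold in $V$, and we conclude $s\not\vdash\emptyset$ for every $s\in S$.

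I expect no serious obstacle here, which matches the authors' description of the statement as ``apparently trivial''. The only two points requiring care are (i) confirming that soundness for Tarski semantics survives the infinitary rules of the calculus, and (ii) justifying that the derivation persists into $V[F]$ via absoluteness of the proof relation; both are routine, and the substantive content is entirely carried by Theorem~\ref{GenFilThe}.
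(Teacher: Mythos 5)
Your proof is correct and follows essentially the same route as the paper's: force below $s$ to obtain a $V$-generic $F$ with $s\in F$, invoke Theorem~\ref{GenFilThe} to get $\mathcal{A}_F\vDash\bigwedge s$ in $V[F]$, transport the derivation of $s\vdash\emptyset$ into $V[F]$ by upward absoluteness of the proof relation, and contradict soundness of $\vdash$ for Tarski semantics there. Your explicit remarks on the soundness of the infinitary rules and on the empty succedent being the false disjunction are slightly more careful than the paper's phrasing, but the argument is the same.
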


\begin{remark}
We note that essentially the same Theorem and Corollary have been proved independently by Ben De Bondt  and Boban Velickovic (using the language of forcing via partial orders to formulate them).
\end{remark}

\begin{proof}
Assume $s\vdash\emptyset$ for some $s\in S$. Note that if $F$ is $V$-generic for $\mathbb{P}_S$ with $s\in F$, the same proof existing in $V$ of $s\vdash\emptyset$ is a proof of the same sequent in $V[F]$.
By Thm. \ref{GenFilThe} $\mathcal{A}_F\models\bigwedge s$ holds in $V[F]$.
Hence by the soundness of Tarski semantics for $\vdash$ in $V[F]$, we would get that
$\mathcal{A}_F\models\psi\wedge\neg\psi$ for some $\psi$ holds in $V[F]$. This is clearly a contradictory statement for $V[F]$.
\end{proof}

We now prove Thm. \ref{GenFilThe}:
\begin{proof}
Let (in $V[F]$) $\mathcal{A}_F $ be the structure obtained from $F$ as in Def. \ref{DefStr}. 
Since $S$ is a dense subset of $\mathbb{P}_S$, $F\cap S$ is a generic filter for $(S,\supseteq)$ as well.
 By Clause \ref{def:ConProInf}(Str.3)
\[
D_d = \{p \in S: \exists c \in \mathcal{C}, c = d \in p\}
\]
is dense in $\mathbb{P}_S$ for any $d\in\mathcal{D}$. Let $p \in F \cap D_d$. Then for some $c \in C$, $d = c \in p \subset \Sigma_F$ and $[d]_F= [c]_F$. This proves part \ref{GenFilThe-1} of the Theorem.

We now establish part \ref{GenFilThe-2}. We have to handle only the cases for $\neg$, $\bigvee$, $\exists$, $\forall$ formulae, since the atomic case and the case $\bigwedge$ can be treated exactly as we did in Fact \ref{fac:TarStrAF} and Lemma \ref{MaxFilThe}. We continue the induction as follows:


\begin{description}
\item[$\bigvee$] 
Suppose $\bigvee \Phi \in \Sigma_F$. Let $p_0 \in F$ be such that $\bigvee \Phi \in p_0$. By Clause
\ref{def:ConProInf}(Ind.4)
\[
D_{\bigvee \Phi} = \{p \in S: \exists \phi \in \Phi, \phi \in p\}
\]
is dense below $p_0$. Since $F$ is $V$-generic over $\mathbb{P}_S$ and $p_0 \in F$, there exists $p \in F \cap D_{\bigvee \Phi}$. Then for some $\phi \in \Phi$, $\phi \in p \subset \Sigma_F$ and 
\[
\mathcal{A}_F \vDash \phi,
\]
proving
\[
\mathcal{A}_F \vDash \bigvee \Phi.
\]
\item[$\exists$] 
Suppose $\exists \vec{v} \,\phi(\vec{v}) \in \Sigma_F$. Let $p_0 \in F$ such that $\exists \vec{v} \,\phi(\vec{v}) \in p_0$. 
 By Clause
\ref{def:ConProInf}(Ind.5)
\[
D_{\exists v \phi(\vec{v})} = \{p \in S: \exists \vec{c} \in \mathcal{C}^{\vec{v}}, \phi(\vec{c}) \in p\}
\]
is dense below $p_0$. Since $F$ is $V$-generic over $\mathbb{P}_S$ and $p_0 \in F$, there exists $p \in F \cap D_{\exists \vec{v} \phi(\vec{v})}$. Then for some $ \vec{c} \in \mathcal{C}^{\vec{v}}$, $\phi(\vec{c}) \in p \subset \Sigma_F$. Therefore 
\[
\mathcal{A}_F \vDash \phi(\vec{c}),
\]
hence 
\[
\mathcal{A}_F \vDash \exists \vec{v} \phi(\vec{v}).
\]

\item[$\forall$] 
Suppose $\psi = \forall\vec{x} \phi(\vec{x})$ is in $\Sigma_F'$. 
One needs to check 
\[
\mathcal{A}_F \vDash \phi(\vec{x})[\vec{x}/\vec{e}]
\]
for $\vec{e}=\ap{[e_1]_F,\dots,[e_n]_F}\in \mathcal{A}_F^{n}$.

Let $\mathcal{E}=\mathcal{C}\cup\mathcal{D}$. Then 
we have that 
\[
\mathcal{A}_F=\bp{[e]_F:\, e\in\mathcal{E}};
\]
hence
\[
\mathcal{A}_F^{<\omega}=\bp{\ap{[e_1]_F,\dots,[e_n]_F}:\, \ap{e_1,\dots,e_n}\in(\mathcal{E}^{<\omega})^{V[F]}}.
\]
A key observation is that
\[
(\mathcal{E}^{<\omega})^{V[F]}=(\mathcal{E}^{<\omega})^{V}.
\]
This gives that for any $\vec{e}\in\mathcal{A}_F^{<\omega}$
\[
\mathcal{A}_F \vDash \phi(\vec{x})[\vec{x}/\vec{e}]
\]
if and only if there are $e_1\dots e_n\in\mathcal{E}$ such that 
$\vec{e}=\ap{[e_1]_F,\dots,[e_n]_F}$ and 
\[
\mathcal{A}_F \vDash \phi(e_1,\dots,e_n).
\]
By Clause
\ref{def:ConProInf}(Ind.3), 
assuming $\forall\vec{x}\phi(\vec{x})\in\Sigma_F$, we get that
$ \phi(e_1,\dots,e_n)\in\Sigma_F$ for all $e_1,\dots,e_n\in\mathcal{E}$.
Hence in $V[F]$ it holds that
\[
\mathcal{A}_F \vDash \phi(\vec{x})[\vec{x}/\vec{e}]
\]
for all $\vec{e}\in\mathcal{A}_F^n$,
as was to be shown.

\item[$\neg$] 
Suppose $\neg \phi \in \Sigma_F$. Clause \ref{def:ConProInf}(Ind.1) ensures that $F' = [\Sigma_F \cup \{\phi \neg\}]$ is a prefilter on $\mathbb{P}_S$  containing $F$. By maximality of $F$, $\phi \neg \in F$. We know that $\phi \neg$ and $\neg \phi$ are equivalent (under any reasonable equivalence notion, for example provability, or logical consequence for Boolean valued semantics). Also the principal connective of $\phi \neg$ is of type $\bigwedge, \forall, \bigvee$ or $\exists$, for which cases the proof has already been given.
\end{description}
The above shows that for all $\mathrm{L}(\mathcal{C})_{\infty\omega}$-sentences $\psi$, if $\psi\in \Sigma_F$ then $\mathcal{A}_F\models\psi$. 
\end{proof}

\begin{remark}
One may wonder why the Theorem is proved just for consistency properties for $\mathrm{L}_{\infty\omega}$ and not for arbitrary consistency properties on $\mathrm{L}_{\infty\infty}$.
Inspecting the proof one realizes that in the case of $\forall$ we crucially used that
$\mathcal{E}^{<\omega}$ is computed the same way in $V[F]$ and in $V$. If instead we are working with
$\mathrm{L}_{\infty\lambda}$ for $\lambda>\omega$, it could be the case that $\mathcal{E}^{<\lambda}$ as computed in $V[F]$ is a strict superset of $\mathcal{E}^{<\lambda}$ as computed in $V$. In this case there is no reason to expect that 
\[
\mathcal{A}_F\models\phi(x_i:\,i<\alpha)[x_i/[e_i]_F:i<\alpha]
\]
when $\forall\vec{x}\phi(\vec{x})\in\Sigma_F$ but
$\ap{e_i:i<\alpha}\in \mathcal{E}^{<\lambda}\setminus V$.

\end{remark}

Note that it may occur that for some $\mathrm{L}(\mathcal{C})_{\infty\omega}$-sentence $\psi$, neither $\psi$ nor $\neg\psi$ belongs to any $r\in S$, hence for some $V$-generic filter $F$ for $\mathbb{P}_s$ it can be the case that $s\not\in F$ while $\mathcal{A}_F\models\psi$. For example this occurs because $S$ is a set and there are class many $\mathrm{L}(\mathcal{C})_{\infty\omega}$-sentence $\psi$.

We can prove a partial converse of the second conclusion of Thm. \ref{GenFilThe} which requires a slight strengthening of the notion of consistency property:
\begin{definition} \label{def:ConProInfMax}
Let $\mathrm{L} = \mathcal{R} \cup \mathcal{D}$, $\mathcal{D}$, $S$ be as in Def. \ref{def:ConProInf} and $\kappa$ be a cardinal greater than or equal to $|\mathcal{C}|$. 

A consistency property 
$S$  is 
\emph{$(\kappa,\lambda)$-maximal} if all its elements consist of $\mathrm{L}(\mathcal{C})_{\kappa\lambda}$-sentences and $S$ satisfies the following clause:

\begin{enumerate}
\item[(S-Max)] \label{conspropMax} For any $p\in S$
and $\mathrm{L}(\mathcal{C})_{\kappa\lambda}$-sentence $\phi$,  either $p\cup\bp{\phi}\in S$ or $p\cup\bp{\neg\phi}\in S$.
\end{enumerate}

\end{definition}

Example \ref{exm:BvalmodConsProp} (given by the finite sets of $\mathrm{L}(M)_{\kappa\lambda}$-sentences which have positive value in some fixed Boolean valued model with domain $M$) gives the standard case of a $(\kappa,\lambda)$-maximal consistency property.

\begin{proposition}
With the notation of  Thm. \ref{GenFilThe}
Assume $S$ is $(\kappa,\omega)$-maximal for some
 $\kappa\geq|\mathcal{C}|$.
Then for any $\mathrm{L}(\mathcal{C})_{\kappa\omega}$-sentence $\psi$
\[
\mathcal{A}_F \vDash \psi \text{ if and only if } \psi \in \Sigma_F.
\]
\end{proposition}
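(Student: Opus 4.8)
The plan is to obtain the two implications from separate ingredients: the implication $\psi \in \Sigma_F \Rightarrow \mathcal{A}_F \vDash \psi$ is already available from Theorem \ref{GenFilThe}, and the converse is where the $(\kappa,\omega)$-maximality hypothesis does the work. Concretely, the forward direction is nothing but part \ref{GenFilThe-2} of Theorem \ref{GenFilThe}: every $\mathrm{L}(\mathcal{C})_{\kappa\omega}$-sentence is in particular an $\mathrm{L}(\mathcal{C})_{\infty\omega}$-sentence, so that theorem applies verbatim and yields $\mathcal{A}_F \vDash \psi$ whenever $\psi \in \Sigma_F$. Thus only the implication $\mathcal{A}_F \vDash \psi \Rightarrow \psi \in \Sigma_F$ needs a new argument.

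For the converse, the crucial step is to note that clause (S-Max) makes the ``deciding set'' $D_\psi = \{q \in S : \psi \in q \text{ or } \neg\psi \in q\}$ a dense subset of $\mathbb{P}_S$. Indeed, given any $p \in \mathbb{P}_S$, since $S$ is dense in $\mathbb{P}_S$ there is $s_0 \in S$ with $s_0 \leq p$ (that is, $p \subseteq s_0$); applying (S-Max) to $s_0$ and $\psi$ produces an extension of $s_0$ lying in $S \cap D_\psi$, witnessing density. As $F$ is $V$-generic for $\mathbb{P}_S$, it meets $D_\psi$, and since every condition in $F$ is a subset of $\Sigma_F$, we conclude that $\psi \in \Sigma_F$ or $\neg\psi \in \Sigma_F$.

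With this dichotomy in hand I would finish by contradiction. Suppose $\mathcal{A}_F \vDash \psi$ but $\psi \notin \Sigma_F$; then $\neg\psi \in \Sigma_F$ by the previous paragraph, and since $\neg\psi$ is again an $\mathrm{L}(\mathcal{C})_{\infty\omega}$-sentence, Theorem \ref{GenFilThe}(\ref{GenFilThe-2}) gives $\mathcal{A}_F \vDash \neg\psi$. Because $\mathcal{A}_F$ is a genuine two-valued Tarski structure by Fact \ref{fac:TarStrAF}, this contradicts $\mathcal{A}_F \vDash \psi$, so $\psi \in \Sigma_F$ after all.

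I do not expect a serious obstacle: the only nontrivial ingredient is the density of $D_\psi$ supplied by (S-Max), after which genericity of $F$ and the already-proved Theorem \ref{GenFilThe} do all the remaining work. The one point worth checking is that invoking (S-Max) keeps us inside $\mathrm{L}(\mathcal{C})_{\kappa\omega}$, which is immediate since the conditions of $S$ consist of $\mathrm{L}(\mathcal{C})_{\kappa\omega}$-sentences and we only ever add $\psi$ or $\neg\psi$ for such a $\psi$.
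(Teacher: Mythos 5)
Your proposal is correct and follows essentially the same route as the paper: the forward direction is quoted from Theorem \ref{GenFilThe}, and the converse uses (S-Max) to show the deciding set $D_\psi$ is dense, genericity of $F$ to meet it, and Theorem \ref{GenFilThe} applied to $\neg\psi$ to rule out $\mathcal{A}_F \vDash \psi$ when $\psi \notin \Sigma_F$. The only cosmetic difference is that the paper argues the contrapositive directly where you argue by contradiction.
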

\begin{proof}
We need to prove the  ``only if'' part of the implication
 assuming $S$ is $(\kappa,\omega)$-maximal. Suppose $\psi$ is an
 $\mathrm{L}(\mathcal{C})_{\kappa\omega}$-sentence not in $\Sigma_F$.  
By $(\kappa,\omega)$-maximality of $S$ we get that
\[
D_\psi=\bp{r\in S:\psi\in r\text{ or }\neg\psi\in r}
\]
is dense in $\mathbb{P}_S$.
Since $F$ is $V$-generic for $\mathbb{P}_S$, we get that $F\cap D_\psi$ is non-empty.
Hence either $\psi\in \Sigma_F$ or $\neg\psi\in\Sigma_F$, but the first is not the case by hypothesis. Then $\neg \psi \in \Sigma_F$ and by Theorem \ref{GenFilThe} $\mathcal{A}_F\models\neg\psi$, e.g. $\mathcal{A}_F\not\models\psi$.

The desired thesis follows.
\end{proof}

Let us recall one result about $<\kappa$-cc forcing notions. Proposition \ref{StaHk2} appears in \cite{GoldsternTools}.
 
\begin{proposition} \label{StaHk2}
Let $\kappa$ be a regular cardinal and $\mathbb{P} \subset H_\kappa$ a forcing notion with the $<\kappa$-cc. Suppose $p \in \mathbb{P}$ and $\dot{\tau}$ is a $\mathbb{P}$-name such that $p \Vdash \dot{\tau} \in H_{\check{\kappa}}$, then there exists $\dot{\sigma} \in H_\kappa$ such that $p \Vdash \dot{\sigma} = \dot{\tau}$.
\end{proposition}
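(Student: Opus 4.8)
The plan is to prove, by induction on ordinals $\gamma$, the following uniform strengthening $\Phi(\gamma)$: for every $q\in\mathbb{P}$ and every $\mathbb{P}$-name $\dot\eta$ with $q\Vdash(\dot\eta\in H_{\check\kappa}\wedge\rank(\dot\eta)<\check\gamma)$ there is $\dot\rho\in H_\kappa$ with $q\Vdash\dot\rho=\dot\eta$. Two standard facts drive the reductions. First, since $\mathbb{P}$ is $<\kappa$-cc it preserves regular cardinals $\geq\kappa$, so $\check\kappa$ is still regular in any extension and hence $H_{\check\kappa}\subseteq V_{\check\kappa}$ there (every element of $H_\kappa$ has rank $<\kappa$ when $\kappa$ is regular); consequently $p\Vdash\dot\tau\in H_{\check\kappa}$ already gives $p\Vdash\rank(\dot\tau)<\check\kappa$. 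Second, a bounding lemma: if $p\Vdash\dot\alpha<\check\kappa$ for an ordinal name $\dot\alpha$, then taking a maximal antichain $A$ below $p$ deciding $\dot\alpha=\check{\alpha_q}$ and using $|A|<\kappa$ together with the regularity of $\kappa$, the ordinal $\gamma:=\sup_{q\in A}(\alpha_q+1)$ is $<\kappa$ and $p\Vdash\dot\alpha<\check\gamma$. Applying this to $\rank(\dot\tau)$ produces $\gamma<\kappa$ with $p\Vdash\rank(\dot\tau)<\check\gamma$, so $\Phi(\gamma)$ (with $q=p$, $\dot\eta=\dot\tau$) yields the Proposition.

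For the inductive step assume $\Phi(\beta)$ for all $\beta<\gamma$ and fix $q,\dot\eta$ as in $\Phi(\gamma)$; the cases $\gamma\le 1$ are trivial (then $q\Vdash\dot\eta=\emptyset$), so assume $\dot\eta$ is forced nonempty. Since $\dot\eta\in H_{\check\kappa}$ forces $|\dot\eta|<\check\kappa$, the bounding lemma gives $\mu<\kappa$ with $q\Vdash|\dot\eta|\le\check\mu$, and by the maximum principle we may fix a name $\dot e$ with $q\Vdash(\dot e:\check\mu\to\dot\eta$ is surjective$)$. For each $\xi<\mu$ let $\dot\eta_\xi$ be the canonical application name for $\dot e(\check\xi)$, so that $q\Vdash(\dot\eta_\xi\in\dot\eta\subseteq H_{\check\kappa}$ and $\rank(\dot\eta_\xi)<\rank(\dot\eta)<\check\gamma)$. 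Fixing $\xi$, choose a maximal antichain $A_\xi$ below $q$ deciding $\rank(\dot\eta_\xi)=\check{\alpha_r}$; for each $r\in A_\xi$ one checks $\alpha_r+1<\gamma$ (since $r$ forces $\rank(\dot\eta)$ to be an ordinal strictly between $\alpha_r$ and $\gamma$, leaving no room if $\gamma=\alpha_r+1$), so $\Phi(\alpha_r+1)$ applies below $r$ and yields $\dot\rho_{\xi,r}\in H_\kappa$ with $r\Vdash\dot\rho_{\xi,r}=\dot\eta_\xi$. Mixing the $H_\kappa$-names $(\dot\rho_{\xi,r})_{r\in A_\xi}$ along the antichain $A_\xi$ produces a single $\dot\rho_\xi$ with $q\Vdash\dot\rho_\xi=\dot\eta_\xi$.

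The final name is the nice name $\dot\sigma:=\{(\dot\rho_\xi,q):\xi<\mu\}$. Since $q$ forces $\dot\eta=\ran(\dot e)=\{\dot\eta_\xi:\xi<\check\mu\}=\{\dot\rho_\xi:\xi<\check\mu\}$, we obtain $q\Vdash\dot\sigma=\dot\eta$, completing the induction. The heart of the argument — and the step I expect to be the main obstacle — is keeping every intermediate name inside $H_\kappa$. This rests on the interplay of the two hypotheses: $<\kappa$-cc bounds all the relevant antichains and cardinalities by something $<\kappa$, while the regularity of $\kappa$ guarantees that a union of fewer than $\kappa$ sets each of hereditary size $<\kappa$ again lies in $H_\kappa$. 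Concretely, one must verify that the mixing operation can be performed within $H_\kappa$: given $<\kappa$-many $H_\kappa$-names indexed by a $<\kappa$-antichain, the mixed name is built by attaching to each relevant pair $(\dot\pi,r)$ a maximal antichain of common lower bounds of $r$ and the conditions appearing in the $\dot\rho_{\xi,r}$; each such antichain has size $<\kappa$ by $<\kappa$-cc, and $|\dom(\dot\rho_{\xi,r})|<\kappa$, so the mixed name has fewer than $\kappa$ pairs drawn from $\mathbb{P}\subset H_\kappa$ and hence lies in $H_\kappa$. The same regularity bookkeeping shows $\dot\sigma\in H_\kappa$, being a set of $\mu<\kappa$ pairs from $H_\kappa$.
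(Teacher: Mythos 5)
The paper does not actually prove Proposition \ref{StaHk2}; it only cites it (to Goldstern's \emph{Tools for your forcing construction}), so there is no in-paper argument to compare against. Your proof is correct and is essentially the standard one: reduce to a rank induction by first using the $<\kappa$-cc bounding lemma to trap $\rank(\dot\tau)$ below some $\check\gamma$ with $\gamma<\kappa$, then at each stage enumerate the forced elements via a surjection from some $\mu<\kappa$, replace each coordinate name by an $H_\kappa$-name using the induction hypothesis on decided ranks, and mix along $<\kappa$-sized antichains; the regularity of $\kappa$ is exactly what keeps every mixed name inside $H_\kappa$, and you correctly identify and verify this as the crux. The only loose end is the phrase ``so assume $\dot\eta$ is forced nonempty'': a general $q$ need not decide whether $\dot\eta=\emptyset$, so you should first pass to a maximal antichain below $q$ deciding emptiness, run your argument below each condition forcing nonemptiness, use $\check\emptyset$ below the others, and mix once more --- a repair that your own $H_\kappa$-bookkeeping already covers, so this is cosmetic rather than a gap.
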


\begin{definition}\label{def:BmodelAS}
Given a relational $\omega$-signature $\mathrm{L}=\mathcal{R}\cup\mathcal{D}$,  an infinite set of constants $\mathcal{C}$ disjoint from $\mathrm{L}$, and  a consistency property
 $S \subset [\mathrm{L}(\mathcal{C})_{\infty \omega}]^{< \omega}$, let 
 \[
 \mathcal{A}_S=(A_S,R_S:R\in\mathcal{R},d_S: d\in\mathcal{D}\cup\mathcal{C})
 \] be 
 defined as follows:
 \begin{itemize}
 \item $A_S=\bp{\sigma \in V^{\RO(\mathbb{P}_S)} \cap H_\mu :\, 
 \Qp{\sigma\in A_{\dot{G}}}^{V^{\RO(\mathbb{P}_S)}}_{\RO(\mathbb{P}_S)}= 1_{\RO(\mathbb{P}_S)}}$, where $\mu$ is a regular cardinal big enough so that $\mathrm{L}\subseteq H_\mu$ and for any $\sigma \in V^{\RO(\mathbb{P}_S)}$ such that 
 \[
 \Qp{\sigma \in A_{\dot{G}}}^{V^{\RO(\mathbb{P}_S)}}_{\RO(\mathbb{P}_S)} = 1_{\RO(\mathbb{P}_S)},
 \] 
 one can find $\tau \in V^{\RO(\mathbb{P}_S)} \cap H_\mu$ with 
 \[
 \Qp{\tau = \sigma}^{V^{\RO(\mathbb{P}_S)}}_{\RO(\mathbb{P}_S)}=1_{\RO(\mathbb{P}_S)};
 \]
 \item $\Qp{R_S(\sigma_1,\dots,\sigma_n)}_{\RO(\mathbb{P}_S)}^{\mathcal{A}_S}= \Qp{\mathcal{A}_{\dot{G}}\models R_{\dot{G}}(\sigma_1,\dots,\sigma_n) }^{V^{\RO(\mathbb{P}_S)}}_{\RO(\mathbb{P}_S)}$ for $R\in\mathcal{R}$;
\item for $d\in\mathcal{D}\cup\mathcal{C}$,  $d_S=\check{d}$.
 \end{itemize}
 \end{definition}
 
\begin{theorem}\label{thm:mainthmAF}
Let $\mathrm{L}$ be a relational $\omega$-signature, $\mathcal{C}$ be a  set of constants disjoint from $\mathrm{L}$ of size at most $\kappa$ and $S \subset [\mathrm{L}(\mathcal{C})_{\kappa\omega}]^{< \omega}$ be a 
consistency property. Then $\mathcal{A}_S$  is a $\RO(\mathbb{P}_S)$-valued model with the mixing property, and for every $s\in S$
\[
\Qp{\bigwedge s}^{\mathcal{A}_S}_{\RO(\mathbb{P}_S)}= 
\Qp{\mathcal{A}_{\dot{G}}\models\bigwedge s}^{V^{\RO(\mathbb{P}_S)}}_{\RO(\mathbb{P}_S)}.
\]
\end{theorem}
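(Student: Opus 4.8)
The plan is to derive all three assertions from a single principle: by Thm.~\ref{GenFilThe} and Fact~\ref{fac:TarStrAF}, $\RO(\mathbb{P}_S)$ forces $\mathcal{A}_{\dot G}$ to be a genuine Tarski structure, and the whole theorem is just the statement that the \emph{externally computed} Boolean semantics of $\mathcal{A}_S$ (the infima/suprema of Def.~\ref{def:boolvalsem} taken in the complete algebra $\RO(\mathbb{P}_S)$) coincides with the \emph{internally described} satisfaction of $\mathcal{A}_{\dot G}$ in the forcing language. Throughout I take the Boolean value of equality on $\mathcal{A}_S$ to be $\Qp{\sigma=\tau}^{\mathcal{A}_S}_{\RO(\mathbb{P}_S)}:=\Qp{\sigma=\tau}^{V^{\RO(\mathbb{P}_S)}}_{\RO(\mathbb{P}_S)}$, the value of set-theoretic equality of the two names (equivalently, the value $\mathcal{A}_{\dot G}$ assigns to $\sigma=\tau$, since the domain elements are actual equivalence classes), and I use that completeness of $\RO(\mathbb{P}_S)$ makes $\mathcal{A}_S$ automatically well behaved.

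First I would check that $\mathcal{A}_S$ satisfies the axioms (A) and (B) of a Boolean valued model. Reflexivity, symmetry and transitivity of the equality value are theorems of $\ZFC$, hence forced with value $1_{\RO(\mathbb{P}_S)}$; the congruence clause (B) for the relations $R_S$ follows by unfolding the definition of $\Qp{R_S(\cdot)}$ in Def.~\ref{def:BmodelAS} and applying the forcing theorem to the fact that $R_{\dot G}$ respects $\cong_{\dot G}$ with value $1$. The same observation shows $A_S\neq\emptyset$ and that the constant interpretations lie in $A_S$. For the mixing property, given an antichain $A\subseteq\RO(\mathbb{P}_S)$ and names $\{\sigma_a:a\in A\}\subseteq A_S$, the forcing mixing lemma yields a name $\sigma$ with $a\le\Qp{\sigma=\sigma_a}$ for all $a$; since each $\sigma_a$ is forced into $A_{\dot G}$ so is $\sigma$, and the reflection clause in the definition of $A_S$ (whose availability is exactly what Prop.~\ref{StaHk2} secures, once $\mu$ is chosen regular with $\RO(\mathbb{P}_S),\mathrm{L},\mathcal{C}\in H_\mu$) lets me replace $\sigma$ by an equivalent name in $H_\mu$, hence in $A_S$.

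The core is the displayed identity, which I would obtain from the stronger inductive claim that for every $\mathrm{L}(\mathcal{C})_{\kappa\omega}$-formula $\phi(\overline{v})$ and every $\overline{\sigma}\in A_S^{k}$,
\[
\Qp{\phi(\sigma_1,\dots,\sigma_k)}^{\mathcal{A}_S}_{\RO(\mathbb{P}_S)}=\Qp{\mathcal{A}_{\dot G}\models\phi(\sigma_1,\dots,\sigma_k)}^{V^{\RO(\mathbb{P}_S)}}_{\RO(\mathbb{P}_S)};
\]
the theorem is the case $\phi=\bigwedge s$ (with $\overline{v}$ empty), using that $s$ is finite. The atomic cases hold by the very definitions of $R_S$ and of the equality value. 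Negation commutes with Boolean complement because $\mathcal{A}_{\dot G}$ is forced to be a two-valued Tarski structure. And $<\kappa$-sized conjunctions and disjunctions commute with the corresponding infima and suprema in $\RO(\mathbb{P}_S)$ because the index set lies in $V$, so the forcing theorem distributes over ground-model-indexed connectives.

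I expect the quantifier clause to be the main obstacle. For the existential case, Def.~\ref{def:boolvalsem} makes the left-hand side the supremum of $\Qp{\phi(\tau,\overline{\sigma})}^{\mathcal{A}_S}$ over $\tau\in A_S$, while the right-hand side is the forcing value of ``there is a witness in $A_{\dot G}$''; the inequality $\le$ is monotonicity. For $\ge$ I would use the standard fact that the value of an existential is the supremum of $\Qp{\rho\in A_{\dot G}\wedge\phi(\rho,\overline{\sigma})}$ over all names $\rho$, trim each $\rho$ on the complement of $\Qp{\rho\in A_{\dot G}}$ to a fixed element of $A_S$ so that it is forced into $A_{\dot G}$, and then apply the reflection clause once more to replace it by an equivalent name in $A_S$, the values being unchanged by Fact~\ref{fac:pressubslambdaanyform}; thus the two suprema agree. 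The universal case is dual (or follows via the negation clause). Crucially, here quantification ranges over finite tuples of variables, so one substitutes finite tuples from $A_S$, which are unproblematic; this is exactly why the argument is confined to $\mathrm{L}_{\infty\omega}$, paralleling the obstruction described in the remark following Thm.~\ref{GenFilThe}, where for $\lambda>\omega$ the extension may add $<\lambda$-sequences not captured by ground-model-indexed tuples of names.
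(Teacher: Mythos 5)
Your proposal is correct and follows essentially the same route as the paper: the mixing property of $\mathcal{A}_S$ is inherited from that of $V^{\RO(\mathbb{P}_S)}$ together with the reflection into $H_\mu$ secured by Prop.~\ref{StaHk2}, and the displayed identity is obtained by the same induction on formula complexity, with the existential clause handled by the same fullness-plus-reflection argument (your ``trim each name and reflect it into $H_\mu$'' is just the mixing proof of fullness of $V^{\RO(\mathbb{P}_S)}$ unwound, where the paper directly picks a single full witness $\tau_0$). The only cosmetic difference is that you verify the Boolean-valued-model axioms (A) and (B) explicitly, which the paper leaves implicit.
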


\begin{corollary} \label{Boolean MET}
Let $\mathrm{L}$ be a relational $\omega$-signature, $\mathcal{C}$ be a  set of constants disjoint from $\mathrm{L}$ of size at most $\kappa$ and $S \subset [\mathrm{L}(\mathcal{C})_{\kappa\omega}]^{< \omega}$ be a 
consistency property.
Then for any $s \in S$ there is a $\mathsf{B}$-Boolean valued model $\mathcal{M}$ with the mixing property in which 
\[
\Qp{\bigwedge s}_\mathsf{B}^\mathcal{M} = 1_\mathsf{B}.
\]
\end{corollary}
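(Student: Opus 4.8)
The plan is to read the desired model off directly from the structure $\mathcal{A}_S$ produced in Theorem \ref{thm:mainthmAF}, after relativizing below the condition $s$. Indeed, Theorem \ref{thm:mainthmAF} already hands us a $\RO(\mathbb{P}_S)$-valued model with the mixing property; the only defect is that the Boolean value $\Qp{\bigwedge s}^{\mathcal{A}_S}_{\RO(\mathbb{P}_S)}$ need not be $1_{\RO(\mathbb{P}_S)}$, but only positive. I would correct this by passing to the complete Boolean algebra sitting below the element of $\RO(\mathbb{P}_S)$ that codes the condition $s$.

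First I would pin the value $\Qp{\bigwedge s}^{\mathcal{A}_S}_{\RO(\mathbb{P}_S)}$ down from below. Write $b_s=\Reg{N_s}$ for the image of $s$ under the dense embedding of $\mathbb{P}_S$ into $\RO(\mathbb{P}_S)$ of Remark \ref{rema1}. By Theorem \ref{thm:mainthmAF},
\[
\Qp{\bigwedge s}^{\mathcal{A}_S}_{\RO(\mathbb{P}_S)} = \Qp{\mathcal{A}_{\dot G}\models\textstyle\bigwedge s}^{V^{\RO(\mathbb{P}_S)}}_{\RO(\mathbb{P}_S)}.
\]
By the forcing theorem this right-hand side is the supremum of the conditions forcing $\mathcal{A}_{\dot G}\models\bigwedge s$. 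Now if $F$ is $V$-generic with $s\in F$, then $s\subseteq\Sigma_F$, so Theorem \ref{GenFilThe}(\ref{GenFilThe-2}) gives $\mathcal{A}_F\models\psi$ for every $\psi\in s$, whence $\mathcal{A}_F\models\bigwedge s$. Thus the condition $s$, equivalently $b_s$, forces $\mathcal{A}_{\dot G}\models\bigwedge s$, and therefore $b_s\leq\Qp{\bigwedge s}^{\mathcal{A}_S}_{\RO(\mathbb{P}_S)}$.

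Next I would relativize. Let $\mathsf{B}=\{c\in\RO(\mathbb{P}_S):c\leq b_s\}$ be the cone below $b_s$, a complete Boolean algebra with top element $b_s$, with meet and join inherited from $\RO(\mathbb{P}_S)$ and complement $c\mapsto b_s\wedge\neg c$. Define $\mathcal{M}$ to have the same domain as $\mathcal{A}_S$, with every Boolean value (of equality and of the relation symbols) replaced by its meet with $b_s$. A routine induction on formulae then shows that for every $\mathrm{L}(\mathcal{C})_{\kappa\omega}$-formula $\phi$ one has $\Qp{\phi}^{\mathcal{M}}_{\mathsf{B}}=b_s\wedge\Qp{\phi}^{\mathcal{A}_S}_{\RO(\mathbb{P}_S)}$; the only clause needing attention is negation, where one checks $b_s\wedge\neg(b_s\wedge x)=b_s\wedge\neg x$ so that the relativized complement matches the ambient one. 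In particular,
\[
\Qp{\bigwedge s}^{\mathcal{M}}_{\mathsf{B}}=b_s\wedge\Qp{\bigwedge s}^{\mathcal{A}_S}_{\RO(\mathbb{P}_S)}=b_s=1_{\mathsf{B}},
\]
using the bound from the previous paragraph.

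It remains to check that $\mathcal{M}$ is genuinely a $\mathsf{B}$-valued model with the mixing property. Conditions (A) and (B) are preserved because meeting every value with the fixed element $b_s$ respects the relevant (in)equalities. For mixing, any antichain $A\subseteq\mathsf{B}$ is in particular an antichain of $\RO(\mathbb{P}_S)$, so the witness $\tau\in M$ supplied by $\mathcal{A}_S$ satisfies $a\leq\Qp{\tau=\tau_a}^{\mathcal{A}_S}$ for all $a\in A$; since $a\leq b_s$, also $a\leq b_s\wedge\Qp{\tau=\tau_a}^{\mathcal{A}_S}=\Qp{\tau=\tau_a}^{\mathcal{M}}_{\mathsf{B}}$, so $\tau$ still witnesses mixing in $\mathcal{M}$. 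Taking $\mathsf{B}$ and $\mathcal{M}$ as above then yields the corollary. The main obstacle is precisely the relativization lemma $\Qp{\phi}^{\mathcal{M}}_{\mathsf{B}}=b_s\wedge\Qp{\phi}^{\mathcal{A}_S}$: one must verify that restricting to the cone commutes with every semantic clause, and in particular that the relativized negation does not spoil the induction.
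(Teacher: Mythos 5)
Your proposal is correct and follows essentially the same route as the paper's own proof: restrict to the cone $\RO(\mathbb{P}_S)\restriction\Reg{N_s}$, use that $s\Vdash_{\mathbb{P}_S}\mathcal{A}_{\dot G}\models\bigwedge s$ (via Theorem \ref{GenFilThe}) to get $\Reg{N_s}\leq\Qp{\bigwedge s}^{\mathcal{A}_S}_{\RO(\mathbb{P}_S)}$, reinterpret the atomic values by meeting them with $\Reg{N_s}$, and observe that the mixing property survives. You simply spell out the relativization lemma $\Qp{\phi}^{\mathcal{M}}_{\mathsf{B}}=b_s\wedge\Qp{\phi}^{\mathcal{A}_S}$ in more detail than the paper, which leaves it as immediate.
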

We first prove the Corollary assuming the Theorem.
\begin{proof}
Given $s\in S$,
we let $\bool{B}=\RO(\mathbb{P}_S)\restriction \Reg{N_s}$.
Since 
\[
s\Vdash_{\mathbb{P}_S}\mathcal{A}_{\dot{G}}\models \bigwedge s,
\]
we get that $\Reg{N_s}\leq \Qp{\bigwedge s}^{\mathcal{A}_S}_{\RO(\mathbb{P}_S)}$.
In particular if we consider $\mathcal{A}_S$ as a $\bool{B}$-valued model by evaluating all atomic formulae
$R(\vec{\sigma})$ by  $\Qp{R(\vec{\sigma}}^{\mathcal{A}_S}_{\RO(\mathbb{P}_S)}\wedge \Reg{N_s}$,
we get that $ \Qp{\bigwedge s}^{\mathcal{A}_S}_{\bool{B}}=1_{\bool{B}}$. 

Note that $\bool{B}$ is not the one point Boolean algebra, since $\Reg{N_s}\neq\emptyset=0_{\RO(\mathbb{P}_S)}$ for all $s\in S$.

It is also immediate to check that $\mathcal{A}_S$ retains the mixing property also when seen as a $\bool{B}$-valued model. 

\end{proof}

We now prove Thm. \ref{thm:mainthmAF}. We need beforehand to extend the forcing relation to formulae of infinitary logic.

\begin{remark}
Given a complete Boolean algebra $\bool{B}$, an $\in$-formula 
$\phi(v_1,\ldots,v_n)$ for $\mathrm{L}_{\infty\omega}$ (for $\mathrm{L}=\bp{\in}$), and any family $\tau_1,\ldots,\tau_n \in V^\mathsf{B}$, 
$\Qp{\phi(\tau_1,\ldots,\tau_n)}^{V^{\bool{B}}}_\mathsf{B}$ denotes the $\mathsf{B}$-value of $\phi(\tau_1,\ldots,\tau_n)$ in the Boolean valued model $V^\mathsf{B}$.

The definition of $\Qp{\phi(\tau_1,\ldots,\tau_n)}^{V^{\bool{B}}}_\mathsf{B}$ is by induction on the complexity of $\phi$. It is the standard one for the atomic formulae $\Qp{\tau\in\sigma}^{V^{\bool{B}}}_\bool{B}$ and $\Qp{\tau=\sigma}^{V^{\bool{B}}}_\bool{B}$.
We extend it to all $\mathrm{L}_{\infty\omega}$ according to Def. \ref{def:boolvalsem}.
\end{remark}    
 
\begin{proof}
We first establish that $\mathcal{A}_S$ has the mixing property.
Let $\bp{\sigma_a:a\in A}$ be a family of elements of $A_S$ indexed by an antichain $A$ of
$\RO(\mathbb{P}_S)$.
Find (by the mixing property of $V^{\RO(\mathbb{P}_S)}$)
$\sigma\in V^{\RO(\mathbb{P}_S)}$ such that $\Qp{\sigma=\sigma_a}^{V^{\RO(\mathbb{P}_S)}}_{\RO(\mathbb{P}_S)}\geq a $ for all $a\in A$. By choice of $A_s$ we can suppose that $\sigma\in A_s$.
By definition of  $\mathcal{A}_S$
\[
\Qp{\sigma=\sigma_a}^{\mathcal{A}_S}_{\RO(\mathbb{P}_S)}=\Qp{\sigma=\sigma_a}^{V^{\RO(\mathbb{P}_S)}}_{\RO(\mathbb{P}_S)}\geq a 
\]
for all $a\in A$. Hence $\sigma$ is a mixing element for the family $\bp{\sigma_a:a\in A}$.

Now we prove the second part of the Theorem.
One needs to check that for any $\mathrm{L}_{\kappa \omega}$-formula $\phi(\vec{v})$ and $\sigma_1,\ldots,\sigma_n \in A_S$,
\[
\Qp{\phi(\vec{\sigma})}_{\RO(\mathbb{P}_S)}^{\mathcal{A}_S} = \Qp{\mathcal{A}_{\dot{G}} \vDash \phi(\vec{\sigma})}^{V^{\RO(\mathbb{P}_S)}}_{\RO(\mathbb{P}_S)}. 
\]
It is clear that this allows one to prove
\[
\Qp{\bigwedge s}^{\mathcal{A}_S}_{\RO(\mathbb{P}_S)}= \Qp{\mathcal{A}_{\dot{G}} \vDash\bigwedge s}^{V^{\RO(\mathbb{P}_S)}}_{\RO(\mathbb{P}_S)},
\] 
letting $\phi=\bigwedge s$.

%
%

We can prove the equality by induction on the complexity of formulae.
\begin{itemize}
\item For atomic sentences this follows by definition.
\item For $\neg$, 
\[
\Qp{\neg \phi}_{\RO(\mathbb{P}_S)}^{\mathcal{A}_S} = \neg \Qp{\phi}_{\RO(\mathbb{P}_S)}^{\mathcal{A}_S} = \neg \Qp{\mathcal{A}_{\dot{G}} \vDash \phi}_{\RO(\mathbb{P}_S)}^{V^{\RO(\mathbb{P}_S)}} = \Qp{\mathcal{A}_{\dot{G}} \not\vDash \phi}_{\RO(\mathbb{P}_S)}^{V^{\RO(\mathbb{P}_S)}} = \Qp{\mathcal{A}_{\dot{G}} \vDash \neg \phi}_{\RO(\mathbb{P}_S)}^{V^{\RO(\mathbb{P}_S)}}.
\]
\item For $\bigwedge$, 
\[
\Qp{\bigwedge \Phi}_{\RO(\mathbb{P}_S)}^{\mathcal{A}_S} = \bigwedge_{\phi \in \Phi} \Qp{\phi}_{\RO(\mathbb{P}_S)}^{\mathcal{A}_S} = \bigwedge_{\phi \in \Phi} \Qp{\mathcal{A}_{\dot{G}} \vDash \phi}_{\RO(\mathbb{P}_S)}^{V^{\RO(\mathbb{P}_S)}} = \Qp{\mathcal{A}_{\dot{G}} \vDash \bigwedge \Phi}_{\RO(\mathbb{P}_S)}^{V^{\RO(\mathbb{P}_S)}}.
\]
\item For $\exists$,

\begin{gather*}
\Qp{\exists v \phi(v,\vec{\sigma})}_{\RO(\mathbb{P}_S)}^{\mathcal{A}_S} = \bigvee_{\tau \in A_S} \Qp{\phi(\tau,\vec{\sigma})}_{\RO(\mathbb{P}_S)}^{\mathcal{A}_S} = \bigvee_{\tau \in A_S} \Qp{\mathcal{A}_{\dot{G}} \vDash \phi(\tau,\vec{\sigma})}_{\RO(\mathbb{P}_S)}^{V^{\RO(\mathbb{P}_S)}} \leq \\
\bigvee_{\tau \in V^{\RO(\mathbb{P}_S)}} \Qp{\mathcal{A}_{\dot{G}} \vDash \phi(\tau,\vec{\sigma})}_{\RO(\mathbb{P}_S)}^{V^{\RO(\mathbb{P}_S)}} = \Qp{\mathcal{A}_{\dot{G}} \vDash \exists v \phi(v,\vec{\sigma})}_{\RO(\mathbb{P}_S)}^{V^{\RO(\mathbb{P}_S)}} = \\
\Qp{\mathcal{A}_{\dot{G}} \vDash \phi(\tau_0,\vec{\sigma})}_{\RO(\mathbb{P}_S)}^{V^{\RO(\mathbb{P}_S)}} = \Qp{\phi(\tau_0,\vec{\sigma})}^{\mathcal{A}_S}_{\RO(\mathbb{P}_S)}\leq \Qp{\exists v \phi(v,\vec{\sigma})}_{\RO(\mathbb{P}_S)}^{\mathcal{A}_S}, 
\end{gather*}

where $\tau_0 \in \mathcal{A}_S$ is obtained by fullness of $V^{\RO(\mathbb{P}_S)}$ and can be supposed in $H_\mu$ by Proposition \ref{StaHk2}; while the equality in the last line holds by inductive assumptions.

\end{itemize}

\end{proof}

Let us briefly remark why genericity is needed for dealing with formulae of type $\neg$, $\bigvee$ and $\exists$ in proving the model existence Theorem. 

The case of negated formulae is dealt with by taking advantage of Def. \ref{MovNegIns}. If the negated formula is atomic, its truth value follows by the definition of $\mathcal{A}_F$. For negated formulae $\neg \phi$ with $\phi$ non-atomic, by moving a negation inside repeatedly, we find a logically equivalent formula $\psi$ where negations appear only at the atomic level of the structural tree of $\psi$; at this level there is control. In particular the operation $\phi\mapsto\phi\neg$ allows to prove Thm. \ref{thm:mainthmAF} by an induction in which one only deals with the logical symbols $\bigwedge,\forall,\bigvee$ and $\exists$. 

Genericity comes to play when dealing with formulae whose principal connective is $\bigvee$ or $\exists$. 
For both connectives the role of genericity in the proof of the corresponding inductive step is similar, so we only analyze the first one. 
The key point is that the structure $\mathcal{A}_F$ associated to a maximal filter $F$ on $\mathbb{P}_S$ is decided by which atomic formulae belong to $\Sigma_F$: any maximal consistent set of atomic formulae for $\mathrm{L}$ defines an $\mathrm{L}$-structure $\mathcal{A}_F$ by Fact \ref{fac:TarStrAF}. Now if $F$ is maximal but not $V$-generic, it may miss some $D_{\bigvee \Phi}$ for some $\Phi\in\Sigma_F$. In which case $[\Sigma_F \cup \{\phi\}]^{< \omega}$ is not a prefilter on $\mathbb{P}_S$ for any $\phi\in\Phi$, by maximality of $F$. Supposing this occurs for some $\Phi$ which is a disjunction of atomic or negated atomic formulae, we get that $\bp{\phi\neg:\phi\in \Phi}\subseteq F$, again by maximality of $F$. Hence $\mathcal{A}_F\not\vDash\bigvee\Phi$ even if $\bigvee\Phi\in F$.

\begin{remark} When working with a consistency property $S$ for $\mathrm{L}(\mathcal{C})_{\kappa \omega}$, there is a canonical way of extending it to a $(\kappa,\omega)$-maximal one. Consider the  Boolean valued model $\mathcal{A}_S$ of Def. \ref{def:BmodelAS}, let also $\bool{B}=\RO(\mathbb{P}_S)$. Then 
\[
S \subset M_S = \{t \in [\mathrm{L}(\mathcal{C} \cup \mathcal{A}_S)_{\kappa \omega}]^{<\omega} : \Qp{t}^{\mathcal{A}_S}_{\bool{B}} > 0_{\mathsf{B}}\}
\]
and $M_S$ is a $(\kappa,\omega)$-maximal consistency property for $\mathrm{L}(\mathcal{C} \cup \mathcal{A}_S)_{\kappa \omega}$. 

\end{remark}

\begin{remark} \label{Coll}
Note that in Def. \ref{def:ConProInf} the size of $\mathcal{C}$ can vary. While Thm. \ref{GenFilThe} holds for any size of $\mathcal{C}$, some sizes automatically collapse cardinals. Consider for example $\mathrm{L} = \{d_\alpha : \alpha < \omega_1\}$ and $\mathcal{C} = \{c_n : n < \omega\}$ countable. Let $S$ denote the set whose elements are the $s \in [\mathrm{L}(\mathcal{C})_{\omega_2 \omega}]^{< \omega}$ such that for some injective interpretation 
\[
c_{i_1} \mapsto \alpha_{i_1},\ldots,c_{i_n} \mapsto \alpha_{i_n}, \ \alpha_{i_j} < \omega_1,
\] 
of the constants from $\mathcal{C}$ appearing in $s$,
\[
(\omega_1,=,c_{i_k} \mapsto \alpha_{i_k},d_\alpha \mapsto \alpha) \vDash s.
\]
$S$ is readily checked to be a consistency property. Consider $\mathcal{A}_G \in V[G]$ for $G$ $V$-generic for $\mathbb{P}_S$. It is a model of $\bigwedge_{\alpha \neq \beta\in\omega_1^V} d_\alpha \neq d_\beta$, furthermore the interpretation maps 
\begin{align*}
f: \omega_1^V &\rao \bp{[d]_G:d\in\mathcal{D}} \\
\alpha &\mapsto d_\alpha^{\mathcal{A}_G}\\
&\\
g: \omega &\rao \bp{[c_n]_G: n < \omega} \\
n &\mapsto [c_n]_G
\end{align*}
are both injective. This entails that the map $\alpha\mapsto n$ if $\bp{d_\alpha=c_n}\in G$ is also injective.
Therefore $\omega_1^V$ is collapsed.
\end{remark}



\section{Mansfield's Model Existence Theorem}\label{sec:mansmodexthm}

We now prove Mansfield's Model Existence Theorem. 


\begin{theorem} \label{ManModExi} 
Let $\mathrm{L}$ be an $\omega$-signature and $S \subset P(\mathrm{L}(\mathcal{C})_{\kappa \lambda})$ a consistency property. Then for any $s \in S$ there exists a Boolean valued model $\mathcal{M}$ in which all sentences from $s$ are valid.
\end{theorem}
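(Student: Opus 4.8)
The plan is to imitate the construction behind Corollary \ref{Boolean MET}, but to take as the domain of the Boolean valued model the constants themselves rather than names in $V^{\RO(\mathbb{P}_S)}$. Concretely, I would form the forcing notion $\mathbb{P}_S$ exactly as in Section \ref{ForConPro} (subsets of conditions of $S$, ordered by reverse inclusion), set $\bool{B}=\RO(\mathbb{P}_S)$, and for every $\mathrm{L}(\mathcal{C})_{\kappa\lambda}$-sentence $\phi$ put
\[
b_\phi=\Reg{\bp{p\in\mathbb{P}_S:\phi\in p}}\in\bool{B}.
\]
The set $\bp{p:\phi\in p}$ is downward closed (if $\phi\in q$ and $p\le q$, i.e. $q\subseteq p$, then $\phi\in p$), hence open, so $b_\phi$ is well defined. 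I then take $\mathcal{M}$ to have domain $M=\mathcal{C}\cup\mathcal{D}$, interpret each constant by itself, and declare $\Qp{c=d}^{\mathcal{M}}=b_{c=d}$ and $\Qp{R(\vec c)}^{\mathcal{M}}=b_{R(\vec c)}$ on atomic formulae.

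First I would verify that $\mathcal{M}$ is a genuine $\bool{B}$-valued model, i.e. that axioms (A) and \ref{eqn:subslambda} hold; this is precisely where the structural clauses are used. Symmetry of equality comes from (Str.1), the substitution axiom \ref{eqn:subslambda} and transitivity come from (Str.2), and reflexivity $\Qp{c=c}=1_{\bool{B}}$ comes from combining (Str.3) with (Str.2) to show that $\bp{p:(c=c)\in p}$ is dense. All these reductions rest on the elementary regular-open facts that $\Reg{U}\wedge\Reg{V}=\Reg{U\cap V}$ for open $U,V$ and that disjoint opens have meet $0_{\bool{B}}$ (so (Con) gives $b_\phi\wedge b_{\neg\phi}=0$).

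The heart of the argument is the inductive claim that $b_\phi\le\Qp{\phi}^{\mathcal{M}}$ for every $\mathrm{L}(\mathcal{C})_{\kappa\lambda}$-sentence $\phi$. I would prove this by induction on the complexity of $\phi$, dispatching negations through the operation $\phi\neg$ of Def. \ref{MovNegIns} (semantically equivalent to $\neg\phi$ and of strictly smaller rank), so that it suffices to treat atomic, negated atomic, $\bigwedge$, $\bigvee$, $\forall$ and $\exists$ sentences. Each case invokes the matching clause of Def. \ref{def:ConProInf}: (Ind.2) shows $\bp{q:\phi\in q}$ is dense below every $p$ with $\bigwedge\Phi\in p$, whence $b_{\bigwedge\Phi}\le b_\phi$ for each $\phi\in\Phi$; dually (Ind.4) handles $\bigvee$, while negated atoms use (Con); and the quantifier cases use (Ind.3) and (Ind.5). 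The decisive point — and the reason this works for $\mathrm{L}_{\infty\infty}$ where the forcing construction of Thm. \ref{thm:mainthmAF} is confined to $\mathrm{L}_{\infty\omega}$ — is that $M=\mathcal{C}\cup\mathcal{D}$ consists of constants lying in $V$, so that in $\Qp{\forall\vec v\,\phi}^{\mathcal{M}}=\bigwedge_{\vec a\in M^{\vec v}}\Qp{\phi(\vec a)}^{\mathcal{M}}$ the assignment $\vec a$ ranges exactly over tuples of constants, which is precisely the range controlled by (Ind.3). There is thus no gap between the elements the quantifier sees and the witnesses supplied by the consistency property; the price is that $\mathcal{M}$ has no reason to be full or to have the mixing property, consistent with the weaker conclusion of the statement.

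Finally, I would fix $s\in S$ and note that $\phi\in s$ gives $N_s\subseteq\bp{q:\phi\in q}$, hence $\Reg{N_s}\le b_\phi\le\Qp{\phi}^{\mathcal{M}}$, so $\Reg{N_s}\le\Qp{\bigwedge s}^{\mathcal{M}}$; passing to the nontrivial relative algebra $\bool{B}\restriction\Reg{N_s}$ and reinterpreting atomic values by meeting with $\Reg{N_s}$, exactly as in Cor. \ref{Boolean MET}, produces a Boolean valued model in which every sentence of $s$ has value $1_{\bool{B}}$. I expect the main obstacle to be purely bookkeeping: arranging a rank on formulae for which $\phi\neg$ strictly decreases complexity, so the negation step of the induction is well-founded, and carefully carrying out the regular-open computations underlying each density argument. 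The conceptual content, however, is entirely the choice of the constants as domain, which localizes the clauses (Ind.3) and (Ind.5) inside $V$.
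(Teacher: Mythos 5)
Your proposal is correct and follows essentially the same route as the paper's proof: the same forcing notion $\mathbb{P}_S$, the same domain $\mathcal{C}\cup\mathcal{D}$ with constants interpreted by themselves, your $b_\phi$ coincides with the paper's $L(\phi)=\bigvee\bp{\Reg{N_t}:\phi\in t}$, and your inductive claim $b_\phi\le\Qp{\phi}^{\mathcal{M}}$ is exactly the paper's Claim 2, proved case by case from the matching clauses of the consistency property with negations reduced to the atomic level via $\phi\neg$. The only cosmetic difference is that the paper relativizes to $\bool{B}\restriction\Reg{N_{s_0}}$ from the outset rather than at the end.
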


\begin{proof}
Fix $s_0 \in S$. Consider $\mathbb{P}_S$ the forcing notion associated to $S$, $\mathsf{B} = \RO(\mathbb{P}_S)$ the corresponding Boolean completion, $\mathbb{P}_S \upharpoonleft s_0 = \{t \in S : t \leq s_0\}$ the restriction to conditions extending $s_0$ and $\mathsf{B} \upharpoonleft s_0 = \{t \in \mathsf{B} : t \leq \Reg{N_{s_0}}\} = \RO(\mathbb{P}_S \upharpoonleft s_0)$. The Boolean valued model $\mathcal{M}$ is constructed with truth values in $\mathsf{B} \upharpoonleft s_0$ and base set the set of constants $\mathcal{C} \cup \mathcal{D}$. The interpretations of constants are given by themselves. Mimicking Mansfield's proof we keep his notation whenever possible. For any sentence $\phi$ define 
\[
L(\phi) = \bigvee \{\Reg{N_t} : \phi \in t\},
\]
and for $\phi$ atomic set 
\[
\Qp{\phi} = L(\phi).
\]
The main technical result in \cite{MansfieldConPro} is Lemma 3. Its equivalent (according to our notion of consistency property) goes as follows: 

\begin{claim} \label{Cla1} If for any $t \leq s$, $t \cup \{\phi\} \in S$, then $N_s \subseteq L(\phi)$. In particular, $\Reg{N_s} \leq L(\phi)$.
\end{claim}
\begin{proof}
Suppose  $N_s \not\subseteq L(\phi)$.
Since the family $\{N_t : t \in \mathbb{P}_S \upharpoonleft s_0\}$ is a basis of $\mathsf{B}$, basic topological facts bring that there exists some $t \in \mathbb{P}_S \upharpoonleft s_0$ such that 
\[
N_t \subseteq N_s \cap \neg L(\phi)=\Reg{\bigcup\bp{N_t: N_t\cap L(\phi)=\emptyset}}.
\]
\begin{itemize}
\item Since $N_t \subseteq N_s$ we have $t \leq s$ and the hypothesis ensures $t \cup \{\phi\} \in S$. Then, by definition of $L$, $N_{t \cup \{\phi\}} \subseteq L(\phi)$.
\item Since $N_t \subseteq \neg L(\phi)$ and $N_{t \cup \{\phi\}} \subseteq N_t$, $N_{t \cup \{\phi\}} \subseteq \neg L(\phi)$.
\end{itemize}
The two statements are incompatible. Hence $N_s \leq L(\phi)$.
\end{proof}

\begin{claim} \label{Cla2}
For any $\phi$, $L(\phi) \leq \Qp{\phi}$.
\end{claim}

\begin{proof}
We proceed by induction on the complexity of formulae. The thesis holds for atomic formulae by definition.
The other cases are dealt with as follows:

\begin{description}
\item[$\neg$] Since $S$ is closed under moving a negation inside, we only need to care about negations acting on atomic formulae. Suppose $\phi$ is atomic. We have 
\[
\Qp{\neg \phi}_{\mathsf{B}_S \upharpoonleft s_0} = \neg \Qp{\phi}_{\mathsf{B}_S \upharpoonleft s_0} = \neg L(\phi) = \bigwedge \{\neg \Reg{N_t} : \phi \in t\}.
\]
We need to prove
\[  
L(\neg \phi) = \bigvee \{\Reg{N_p} : \neg \phi \in p\} \leq \bigwedge \{\neg \Reg{N_t} : \phi \in t\}.
\]
Fix $t$ containing $\phi$. For any $p \ni \neg \phi$, $p$ and $t$ are incompatible by Clause \ref{conspropCon}(Con). Remark \ref{rema1} ensures $\Reg{N_p} \leq \neg \Reg{N_t}$. Then 
\[
\bigvee_{p \ni \neg \phi} \Reg{N_p} \leq \neg \Reg{N_t}.
\]
Since this is true for any $t \ni \phi$, 
\[
\bigvee_{p \ni \neg \phi} \Reg{N_p} \leq \bigwedge_{t \ni \phi} \neg \Reg{N_t}.
\]

\item[$\bigwedge$] 
Suppose by induction that the result holds for any $\phi \in \Phi$. Let $\bigwedge \Phi \in s$. Then for any $t$ extending $s$ and any $\phi \in \Phi$, $t \cup \{\phi\} \in S$. By Claim \ref{Cla1}, $\Reg{N_s} \leq L(\phi)$ for any $\phi \in \Phi$. By the induction hypothesis $\Reg{N_s} \leq L(\phi) \leq \Qp{\phi}$. As this holds for any $\phi \in \Phi$, $\Reg{N_s} \leq \bigwedge_{\phi \in \Phi} \Qp{\phi} = \Qp{\bigwedge \Phi}$. This holds for any $s$ such that $\bigwedge \Phi \in s$, hence
\[
L(\bigwedge \Phi) = \bigvee \{\Reg{N_s} : \bigwedge \Phi \in s\} \leq \Qp{\bigwedge \Phi}.
\]

\item[$\bigvee$] 
Suppose the result true for any $\phi \in \Phi$. If $L(\bigvee \Phi) \nleq \Qp{\bigvee \Phi}$, $L(\bigvee \Phi) \wedge \neg \Qp{\bigvee \Phi} \neq \emptyset$. Therefore there exists some $t \leq s_0$ such that $N_t \subseteq L(\bigvee \Phi)$ and $N_t \subseteq \neg \Qp{\bigvee \Phi}$.
\begin{itemize}
\item By the first inclusion, since $N_t$ is open and $\bigcup \{\Reg{N_p} : \bigvee \Phi \in p\}$ is dense in $\bigvee \{\Reg{N_p} : \bigvee \Phi \in p\}$, there exists some $p'$ containing $\bigvee \Phi$ such that $N_t \cap \Reg{N_{p'}}$ is non-empty. Hence we can find $p \leq t,p'$. Since $\bigvee \Phi \in p'$, $\bigvee \Phi \in p$.
\item By the second inclusion (and $p \leq t$), 
\[
N_p \subseteq N_t \subseteq \neg \bigvee_{\phi \in \Phi} \Qp{\phi} = \bigwedge_{\phi \in \Phi} \neg \Qp{\phi}.
\]
\end{itemize} 
 
(Ind.4) ensures that for some $\phi_0 \in \Phi$, $q = p \cup \{\phi_0\} \in S$. By induction hypothesis, $L(\phi_0) \leq \Qp{\phi_0}$, hence $\neg \Qp{\phi_0} \leq \neg L(\phi_0)$. Therefore 
\[
\hspace{1,3cm} N_q \subseteq N_p \subseteq \bigwedge_{\phi \in \Phi} \neg \Qp{\phi} \leq \neg \Qp{\phi_0} \leq \neg L(\phi_0) = \bigwedge \{\neg \Reg{N_t} : \phi_0 \in t\} \subseteq \neg \Reg{N_q},
\]
a contradiction.

\item[$\forall$]
Let $s \in S$ contain $\forall \vec{v} \phi(\vec{v})$. Then for any $t$ extending $s$ and any $\vec{c} \in (C \cup D)^{\vec{v}}$, $t \cup \{\phi(\vec{c})\} \in S$. By Claim \ref{Cla1} and the induction hypothesis, 
\[
\Reg{N_s} \leq L(\phi(\vec{c})) \leq \Qp{(\phi(\vec{c}))}
\]
for any $\vec{c} \in (C \cup D)^{\vec{v}}$. Then $\Reg{N_s} \leq \bigwedge_{\vec{c} \in (C \cup D)^{\vec{v}}} \Qp{\phi(\vec{c})} = \Qp{\forall \vec{v} \phi(\vec{v})}$. All this was done for any $s$ such that $\forall \vec{v} \phi(\vec{v}) \in s$. Then we may take the sup over such sets to obtain 
\[
L(\forall \vec{v} \phi(\vec{v})) = \bigvee \{\Reg{N_s} : \forall \vec{v} \phi(\vec{v}) \in s\} \leq \Qp{\forall \vec{v} \phi(\vec{v})}.
\] 

\item[$\exists$] 
Suppose the result true for any $\phi(\vec{c})$. If $L(\exists \vec{v} \phi(\vec{v})) \nleq \Qp{\exists \vec{v} \phi(\vec{v})}$, $L(\exists \vec{v} \phi(\vec{v})) \wedge \neg \Qp{\exists \vec{v} \phi(\vec{v})} \neq \emptyset$. Then there exists some $t \leq s_0$ such that $N_t \subseteq L(\exists \vec{v} \phi(\vec{v}))$ and $N_t \subseteq \neg \Qp{\exists \vec{v} \phi(\vec{v})}$.
\begin{itemize}
\item By the first inclusion, since $N_t$ is open and $\bigcup \{\Reg{N_p} : \exists \vec{v} \phi(\overline{v}) \in p\}$ is dense in $\bigvee \{\Reg{N_p} : \exists \vec{v} \phi(\vec{v}) \in p\}$, there exists some $p'$ containing $\exists \vec{v} \phi(\vec{v})$ such that $N_t \cap \Reg{N_{p'}}$ is non-empty. Hence we can find $p \leq t,p'$ with $\exists \vec{v} \phi(\vec{v}) \in p$.
\item By the second inclusion, 
\[
N_p \subseteq N_t \subseteq \neg \bigvee_{\vec{c} \in (\mathcal{C} \cup \mathcal{D})^{\vec{v}}} \Qp{\phi(\vec{c})} = \bigwedge_{\vec{c} \in (\mathcal{C} \cup \mathcal{D})^{\vec{v}}} \neg \Qp{\phi(\vec{c})}.
\]
\end{itemize} 
 
(Ind.5) ensures that for some $\vec{c}_0 \in (\mathcal{C} \cup \mathcal{D})^{\vec{v}}$, $q = p \cup \{\phi(\overline{c}_0)\} \in S$. By the induction hypothesis, $L(\phi(\vec{c}_0)) \leq \Qp{\phi(\vec{c}_0)}_\mathsf{B}$, hence $\neg \Qp{\phi(\vec{c}_0)} \leq \neg L(\phi(\vec{c}_0))$. Therefore
\[
N_q \subseteq N_p \subseteq \bigwedge_{\overline{c} \subseteq \mathcal{C}} \neg \Qp{\phi(\overline{c})}_\mathsf{B} \subseteq \neg \Qp{\phi(\overline{c}_0)}_\mathsf{B} \subseteq \neg L(\phi(\overline{c}_0)) = \bigwedge \{\neg \Reg{N_t} : \phi(\overline{c}_0) \in t\} \subseteq \neg \Reg{N_q},
\]
a contradiction.
\end{description}

\end{proof}

Now we can check that $\mathcal{M}$ is a Boolean valued model. 

\begin{itemize}
\item Since for any $t$ in $S$ and any $c \in \mathcal{C} \cup \mathcal{D}$, $t \cup \{c = c\} \in S$, $L(c = c) = 1_{\mathsf{B}_S \upharpoonleft s_0}$.
\item Let $c = d \in s$. Then for any $t$ extending $s$, $t \cup \{d = c\} \in S$, hence $\Reg{N_s} \leq L(d = c)$. Since the previous holds for any $s$ containing $c = d$,
\[
\bigvee \{\Reg{N_s} : c = d \in s\} \leq L(d = c).
\]
Since $c = d$ and $d = c$ are atomic, 
\[
\Qp{c = d} = L(c = d) = \bigvee \{\Reg{N_s} : c = d \in s\} \leq L(d = c) = \Qp{d  = c}.
\]
\item Let $c_1 = d_1, \ldots, c_n = d_n, \phi(c_1,\ldots,c_n) \in s$ with $\phi$ atomic. Then for any $t$ extending $s$, $t \cup \{\phi(d_1,\ldots,d_n)\} \in S$, hence $\Reg{N_s} \leq L(\phi(d_1,\ldots,d_n))$. Since the previous holds for any $s$ containing $c_1 = d_1, \ldots, c_n = d_n, \phi(c_1,\ldots,c_n)$,
\[
\bigvee \{\Reg{N_s} : c_1 = d_1, \ldots, c_n = d_n, \phi(c_1,\ldots,c_n) \in s\} \leq L(\phi(d_1,\ldots,d_n)).
\]
Since $\phi$ and $c_i = d_i$ are atomic, 
\begin{align*}
\Qp{c_1 = d_1} \wedge \ldots \wedge & \Qp{c_n = d_n} \wedge \Qp{\phi(c_1,\ldots,c_n)} = \\
L(c_1 = d_1) \wedge \ldots \wedge & \ L(c_n = d_n) \wedge L(\phi(c_1,\ldots,c_n)) = \\
\bigvee \{\Reg{N_s} : c_1 = d_1, \ldots, c_n = d_n, \ &\phi(c_1,\ldots,c_n) \in s\} \leq L(\phi(d_1,\ldots,d_n)) = \\
& \Qp{\phi(d_1,\ldots,d_n)}.
\end{align*}
To prove the equality between lines two and three it is enough to check $L(\phi) \wedge L(\psi) = \bigvee \{\Reg{N_s} : \phi,\psi \in s\}$. By definition of $L$,
\begin{align*}
L(\phi) \wedge L(\psi) & = \\
\bigvee \{\Reg{N_s} : \phi \in s\} \wedge \bigvee \{\Reg{N_t} : \psi \in t\} & = \bigvee \{\bigvee \{\Reg{N_s} : \phi \in s\} \wedge \Reg{N_t} : \psi \in t\} = \\
\bigvee \{\bigvee \{\Reg{N_s} \wedge \Reg{N_t} : \phi \in s\} : \psi \in t\} & = \bigvee \{\Reg{N_s} \wedge \Reg{N_t} : \phi \in s \wedge \psi \in t\} = \\
\bigvee \{\Reg{N_q} :\  & \phi,\psi \in q\}.
\end{align*}
\end{itemize}

It remains to conclude that $s_0$ is valid in $\mathcal{M}$. Now for any $\phi \in s_0$ and $t \leq s_0$, $\phi \in t$, hence Claim \ref{Cla2} ensures $1_{\mathsf{B}_S \upharpoonleft s_0} = \Reg{N_{s_0}} \leq L(\phi) \leq \Qp{\phi}_\mathsf{B}$ for any such $\phi$.
\end{proof}

\begin{remark}
We note that a key assumption for Mansfield's result is that $\mathrm{L}$ is a first order signature.
This is crucially used in the proof that \ref{eqn:subslambda} holds for the relation symbols of $\mathrm{L}$ in the structure $\mathcal{M}$: since these relation symbols are finitary, in the proof above we just had to distribute finitely many infinite disjunctions; this distributive law holds for any complete Boolean algebra. 
If we dealt with a relational $\omega_1$-signature we might have had to distribute countably many infinitary disjunctions to establish \ref{eqn:subslambda}; this is possible only under very special circumstances on $\mathbb{P}_S$.
 
 We do not know whether this result can be established for arbitrary $\lambda$-signatures. We conjecture this is not the case.
\end{remark}

\begin{remark}
The model produced by Mansfield's theorem does not verify the mixing property in general. Consider again the setting in remark \ref{Coll}.\vspace{0,2cm}

Fix $\alpha < \omega_1$. For each $n < \omega$ define

\[
\phi_n : \hspace{0,3cm} c_n = d_\alpha \ \wedge \bigwedge_{m < n} c_m \neq d_\alpha.
\]

\noindent Then $\{\Reg{N_{\{\phi_n\}}} : n < \omega\}$ is an antichain. Assign $c_{n-1}$ to each $\Reg{N_{\{\phi_n\}}}$. We show that for no element $m$ in the structure provided by Mansfield theorem with respect to the collapsing consistency property we have

\[
\Reg{N_{\{\phi_n\}}} \leq \Qp{m = c_{n-1}} = \bigvee \{\Reg{N_s} : m = c_{n-1} \in s\}.
\]
\vspace{0cm}

There are three possibilities for $m$.

\begin{itemize}
\item $m = c_{n_0}$: Note that because the interpretations generating the consistency property are injective the set

\[
\{t \in S : \bigwedge_{n \neq m} c_n \neq c_m \in t\} 
\]
\vspace{0cm}

\noindent is dense. Then $\Qp{c_{n_0} = c_{n_0-1}} = 0$ and it cannot be that 

\[
\Reg{N_{\{\phi_{n_0}\}}} \leq \Qp{c_{n_0} = c_{n_0-1}}.
\]
\vspace{0cm}

\item $m = d_\alpha$: Take any $n < \omega$. The set $\{t \in S : c_{n-1} \neq d_\alpha \in t\}$ is dense below $\{\phi_n\}$. Then we cannot have $\Reg{N_{\{\phi_{n}\}}} \leq \bigvee \{\Reg{N_t} : d_\alpha = c_{n-1} \in t\}$.\medskip

\item $m = d_\beta$, $\beta \neq \alpha$: Take any $n < \omega$. Because the constant $d_\beta$ does not appear in $\{\phi_n\}$ and the sentence $\phi_n$ only forces $c_{n-1}$ not be $d_\alpha$, we can suppose that the interpretation that generates $\{\phi_n\}$ is such that $c_{n-1}$ is interpreted differently from $d_\beta$, proving $\{\phi_n, c_{n-1} \neq d_\beta\} \in S$. Then we cannot have $\Reg{N_{\{\phi_{n}\}}} \leq \bigvee \{\Reg{N_t} : d_\beta = c_{n-1} \}$ since $\Reg{N_{\{\phi_{n}, c_{n-1} \neq d_\beta\}}} \leq \Reg{N_{\{\phi_{n}\}}}$.
 
\end{itemize}

\end{remark}


\section{Proofs of model theoretic results}\label{sec:proofmodthres}

In this section we prove that $\mathrm{L}_{\infty \omega}$ with Boolean valued semantics has a completeness theorem, the Craig interpolation property and also an omitting types theorem. The last two results
generalize to $\mathrm{L}_{\infty \omega}$ results obtained in \cite{KeislerInfLog} for $\mathrm{L}_{\omega_1 \omega}$ by replacing Tarski semantics with Boolean valued semantics. We also provide the missing details for the general $\mathrm{L}_{\infty \infty}$ results.

\subsection{Proof of Thm. \ref{them:boolcompl}}
\begin{proof}
\ref{thm:boolcomp3} implies \ref{thm:boolcomp2} and \ref{thm:boolcomp2} implies 
\ref{thm:boolcomp1} are either standard or trivial.

Assume \ref{thm:boolcomp3} fails, we show that \ref{thm:boolcomp1} fails as well. Assume $T\not\vdash S$ with $T,S$ sets of $\mathrm{L}_{\infty\omega}$-formulae. Let $\mathcal{C}$ be an infinite set of fresh constants and
let $R$ be the family of finite sets $r\subseteq \mathrm{L}(\mathcal{C})$ such that

\begin{itemize}
\item
$r\cup T\not\vdash S$,
\item
any $\phi\in r$ contains only finitely many constants from 
$\mathcal{C}$.
\end{itemize}

Provided $R$ is a consistency property, this gives that $\mathcal{A}_R$ witnesses that
$T\not\models_{\mathrm{Sh}}S$ as:
\begin{itemize}
\item 
$\Qp{\psi}^{\mathcal{A}_R}=1_{\RO(\mathbb{P}_R)}$ for all $\psi\in T$, 
since for any $\psi\in T$
\[
E_\psi=\bp{r\in R:\, \psi\in r}
\]
is dense in $\mathbb{P}_R$;
\item
$\Qp{\phi}^{\mathcal{A}_R}=0_{\RO(\mathbb{P}_R)}$ for all $\phi\in S$, since for any such $\phi$
\[
F_\phi=\bp{r\in R:\, \neg\phi\in r}
\]
is dense in $\mathbb{P}_R$: note that $r\cup\bp{\neg\phi}\cup T\vdash S$ if and only if 
$r\cup T\vdash S\cup\bp{\phi}$, which -if $\phi\in S$- amounts to say that $r\not\in R$.
\end{itemize}

Now we show that $R$ is a consistency property:
\begin{itemize}
\item[(Con)] Trivial by definition of $R$, since the calculus is sound.

\item[(Ind.1)] Trivial since for any $\neg\phi$ in $r$, $\bigwedge r\vdash \bigwedge (r\cup\bp{\phi\neg})$ and conversely.

\item[(Ind.2)] Trivial since $r\vdash\bigwedge( r\cup\bp{\phi})$ and conversely if 
$\bigwedge\Phi\in r$ and $\phi\in \Phi$.

\item[(Ind.3)] Trivial since $r\vdash\bigwedge( r\cup\bp{\phi(c)})$ and conversely if 
$\forall v\,\phi(v)\in r$.

\item[(Ind.4)] Let $\bigvee \Sigma \in r\in R$. Since $r\in R$, $r\cup T\not\vdash S$. By contradiction suppose that for all $\sigma\in\Sigma$, $r\cup\bp{\sigma}\cup T\vdash S$. Then, by the left $\bigvee$-rule of the calculus $r\cup\bp{\bigvee\Sigma}\cup T\vdash S$. This contradicts $r\in R$, since $r=r\cup\bp{\bigvee\Sigma}$.

\item[(Ind.5)] Suppose $\exists v\, \varphi(v) \in r$. Pick $c\in\mathcal{C}$ which does not appear in any formula in $r$. It exists by definition of $R$.
Suppose $r\cup\bp{\varphi(c)}\cup T\vdash S$. Since $c$ does not appear in any formula of $r\cup S$,
$r\cup\bp{\exists x\,\varphi(x)}\vdash S$ (applying the rules of the calculus). This contradicts $r\in R$, since
$r=r\cup\bp{\exists x\,\varphi(x)}$.

\item[(Str.1,2,3)] All three cases follow from 
standard applications of the rules of the calculus for equality.
\end{itemize}
\end{proof}

\subsection{Proof of Thm. \ref{thm:craigint}}
\begin{proof}
Fix a set $\mathcal{C}$ of fresh constants for $\mathrm{L}$ of size $\kappa$.
Consider $X_\phi$ the set of all $\mathrm{L}(\mathcal{C})_{\kappa \omega}$-sentences $\chi$ such that:
\begin{itemize}
\item all non logical symbols from $\mathrm{L}$ appearing in $\chi$ also appear in $\phi$,
\item only a finite number of constants from $\mathcal{C}$ are in $\chi$.
\end{itemize} 
Define $X_\psi$ similarly. Consider $S$ the set of finite sets of $\mathrm{L}(\mathcal{C})_{\kappa \omega}$-sentences $s$ such that: 
\begin{itemize}
\item $s = s_1 \cup s_2$,
\item $s_1 \subset X_\phi$,
\item $s_2 \subset X_\psi$,
\item if $\theta,\sigma \in X_\phi \cap X_\psi$ are such that
\begin{itemize}
\item no constant symbols of $\mathcal{C}$ appears in either $\theta$ or $\sigma$,
\item $\vDash_{\mathrm{BVM}} \bigwedge s_1 \rightarrow \theta$ and $\vDash_{\mathrm{BVM}} \bigwedge s_2 \rightarrow \sigma$,
\end{itemize} 
then $\theta \wedge \sigma$ is Boolean consistent.
\end{itemize}

We will later show that $S$ is a consistency property. Assuming this fact as granted, we now show why this provides the interpolant. The Model Existence Theorem \ref{thm:mainthmAF} grants that any $s \in S$ has a Boolean valued model. By hypothesis $\vDash_{\mathrm{BVM}} \phi \rightarrow \psi$, thus the set $\{\phi, \neg \psi\}$ is not consistent and it cannot belong to $S$. 

Now we search what property the set $\{\phi, \neg \psi\}$ misses. They have no constant from $\mathcal{C}$ since they are $\mathrm{L}$-sentences. The sets $s_1$ and $s_2$ are given by $\{\phi\}$ and $\{\neg \psi\}$. So, the last property must fail. This means that there exist $\theta,\sigma \in X_\phi \cap X_\psi$ with no constant symbols of $\mathcal{C}$ in either of them and such that $\vDash_{\mathrm{B}} \phi \rightarrow \theta$, $\vDash_{\mathrm{BVM}} \neg \psi \rightarrow \sigma$ and $\theta \wedge \sigma$ is not consistent. The last assertion gives
\[
\vDash_{\mathrm{BVM}} \theta \rightarrow \neg \sigma.
\]
This together with  
\[
\vDash_{\mathrm{BVM}} \neg \sigma \rightarrow \psi
\]
implies 
\[
\vDash_{\mathrm{BVM}} \theta \rightarrow \psi.
\]
Recall that $\theta,\sigma$ have no constant symbol from $\mathcal{C}$, hence the interpolant is given by the $\mathrm{L}_{\kappa \omega}$-sentence $\theta$.
\medskip

It remains to check that $S$ is a consistency property. 

\begin{itemize}
\item[(Con)] 
The very definition of $S$ then gives that if some $s\in S$ is such that  $\theta,\neg\theta\in S$, then
$\theta,\neg\theta\in s_1\subseteq X_\phi$ or $\theta,\neg\theta\in s_2\subseteq X_\psi$.
Towards a contradiction 
w.l.o.g. we can suppose that for some $s=s_1\cup s_2\in S$ and $\theta\in X_\phi$, 
$\theta,\neg \theta \in s_1$. Consider any sentence $\chi'\in X_{\phi}\cap X_\psi$ such that $\vDash_\mathrm{BVM} \bigwedge s_2 \rightarrow \chi'$. Because $s_1$ is contradictory we have $\vDash_\mathrm{BVM} \bigwedge s_1 \rightarrow \neg \chi'$. But $\chi' \wedge \neg \chi'$ is not Boolean consistent, a contradiction.
\item[(Ind.1)] Suppose $\neg \chi \in s_1 \subseteq s$. Because $s_1 \cup \{\chi \neg\}$ and $s_1$ are equivalent, any sentence $\chi'$ such that $\vDash_\mathrm{BVM} \bigwedge s_1 \cup \{\chi \neg\} \rightarrow \chi'$ also verifies $\vDash_\mathrm{BVM} \bigwedge s_1 \rightarrow \chi'$. Then, $s \cup \{\chi \neg\} \in S$.
\item[(Ind.2)] Suppose $\chi \in \Phi$ and $\bigwedge \Phi \in s_1 \subseteq s$. Because $\bigwedge s_1$ and $\bigwedge s_1 \cup \{\chi\}$ are equivalent, $s \cup \{\chi\} \in S$. 
\item[(Ind.3)] Suppose $\forall v \chi(v) \in s_1 \subseteq s$ and $c \in \mathcal{C} \cup \mathcal{D}$. Because $\bigwedge s_1$ and $\bigwedge s_1 \cup \{\chi(c)\}$ are equivalent, $s \cup \{\chi(c)\} \in S$.
\item[(Ind.4)] Let $\bigvee \Sigma \in s_1 \subseteq s$. By contradiction we suppose that for no $\sigma \in \Sigma$, $s \cup \{\sigma\} \in S$. This means that for each $\sigma \in \Sigma$ there exist $\chi_\sigma^1, \chi_\sigma^2 \in X_\phi \cap X_\psi$ such that 
\[
\vDash_\mathrm{BVM} \bigwedge (s_1 \cup \{\sigma\}) \rightarrow \chi_\sigma^1  \emph{ and } \vDash_\mathrm{BVM} \bigwedge s_2 \rightarrow \chi_\sigma^2,
\]
but $\chi_\sigma^1 \wedge \chi_\sigma^2$ is inconsistent. 
Then
\begin{align*}
&\vDash_\mathrm{BVM} \bigwedge (s_1 \cup \{\bigvee \Sigma\}) \rightarrow \bigvee \{\chi_\sigma^1 : \sigma \in \Sigma\}  \emph{ and } \\ 
&\vDash_\mathrm{BVM} \bigwedge s_2 \rightarrow \bigwedge \{\chi_\sigma^2 : \sigma \in \Sigma\}.
\end{align*}
Note that $s_1 \cup \bp{\bigvee \Sigma} = s_1$. Because $\chi_\sigma^1 \wedge \chi_\sigma^2$ is Boolean inconsistent for each $\sigma \in \Sigma$, so is 
\begin{gather*}
\bigvee \{\chi_\sigma^1 : \sigma \in \Sigma\} \wedge \bigwedge \{\chi_\sigma^2 : \sigma \in \Sigma\} \equiv_{\mathrm{BVM}} \\
\bigvee \{\chi_{\sigma'}^1 \wedge \bigwedge \{\chi_\sigma^2 : \sigma \in \Sigma\}: \sigma' \in \Sigma\} \equiv_{\mathrm{BVM}} \\
\bigvee \bigwedge \{\chi_{\sigma'}^1 \wedge \chi_\sigma^2 : \sigma \in \Sigma \wedge \sigma' \in \Sigma\} \models_{\mathrm{BVM}} \\
\bigvee \bigwedge \{\chi_{\sigma}^1 \wedge \chi_\sigma^2 : \sigma \in \Sigma\},
\end{gather*}
since the latter is Boolean inconsistent.

Then $\theta$ being $ \bigvee \{\chi_\sigma^1 : \sigma \in \Sigma\}$ and $\sigma$ being  
$\bigwedge \{\chi_\sigma^2 : \sigma \in \Sigma\}$ witness that $s = s_1 \cup s_2 \not\in S$.

\item[(Ind.5)] Suppose $\exists v \chi(v) \in s_1 \subseteq s$ and consider $c \in \mathcal{C}$ a constant not appearing in $s$, which exists by the clause on the number of constants from $\mathcal{C}$ in sentences in $X_\phi$. Let us check $s \cup \{\chi(c)\} \in S$. For this take $\theta,\sigma \in X_\phi \cap X_\psi$ such that $\vDash_{\mathrm{Sh}} \bigwedge s_1 \cup \{\chi(c)\} \rightarrow \theta$ and $\vDash_{\mathrm{Sh}} \bigwedge s_2 \rightarrow \sigma$ with no constants from $\mathcal{C}$ either in $\theta$ or in $\sigma$. We must show that $\theta\wedge \sigma$ is Boolean satisfiable. It is enough to prove $\vDash_{\mathrm{Sh}} s_1 \rightarrow \theta$. Consider $\mathcal{M}$ a Boolean valued model for $\mathrm{L}\cup\bp{c}$ with the mixing property such that $\mathcal{M} \vDash s_1$. Since $\exists v \chi(v) \in s_1$, $\Qp{\exists v \chi(v)}_\mathsf{B}^\mathcal{M} = 1_\mathsf{B}$;
since $\mathcal{M}$ is full, we can find $\tau\in  M$ such that 
\[
\Qp{\exists v \chi(v)}_\mathsf{B}^\mathcal{M} = \Qp{\chi(\tau)}_\mathsf{B}^\mathcal{M} = 1_\mathsf{B}. 
\]
Consider $\mathcal{M}'$ to be the model obtained from $\mathcal{M}$ reinterpreting all symbols of $\mathrm{L}$ the same way, but mapping now $c$ to $\tau$.

Then $\mathcal{M}'\models\bigwedge s_1\cup\bp{\phi(c)}$, hence $\Qp{\theta}^{\mathcal{M}'}_{\bool{B}}=1_{\bool{B}}$ as well.
Since $c$ does not appear in $\theta$ we get that 
 $\Qp{\theta}^{\mathcal{M}}=\Qp{\theta}^{\mathcal{M}'}=1_{\bool{B}}$.
 
\item[(Str.1,2,3)] All three cases follow from $\bigwedge s_1$ and $\bigwedge s_1 \cup \{\chi\}$ being $\mathrm{BVM}$-equivalent when $\chi$ is the relevant formula of each clause.
\end{itemize} 
\end{proof}

\subsection{Proof of Thm. \ref{thm:omittypthm}}

\begin{proof}
Fix a set $\mathcal{C}=\bp{c_i: i<\kappa}$ of constants. Consider $\mathrm{L}(\mathcal{C})_{T,\mathcal{F}}$ the set of all sentences obtained by replacing in the $\mathrm{L}_{T,\mathcal{F}}$-formulae with free variables in $\bp{v_i:i\in\omega}$ all occurrences of these finitely many free variables by constants from $\mathcal{C}$. The consistency property $S$ has as elements the sets
\begin{align*}
s = s_0 \cup \bp{\bigvee \bp{\phi[c_{\sigma_\Phi(0)}, \ldots, c_{\sigma_\Phi(n_\Phi-1)}]: \phi \in \Phi}: \, \Phi \in \mathcal{F}_0},
\end{align*}
where: 
\begin{itemize}
    \item $s_0$ is a finite set of $\mathrm{L}(\mathcal{C})_{T,\mathcal{F}}$ sentences,
    \item only finitely many constants from $\mathcal{C}$ appear in $s_0$, 
    \item $\mathcal{F}_0$ is a finite subset of $\mathcal{F}$, 
    \item $\sigma_\Phi:\omega\to \mathcal{C}$ for all $\Phi\in \mathcal{F}_0$, and
    \item $T \cup s_0$ has a Boolean valued model.
\end{itemize} 

We first check that $S$ is a consistency property: consider $s \in S$ and $\psi \in s$. First of all, by definition of $S$ and the Completeness  Thm. \ref{them:boolcompl} we can fix a mixing model $\mathcal{M}$ of $s_0 \cup T$. We deal with two cases. If $\psi \in s_0 \cup T$, then $\mathcal{M} \vDash \psi$ allows to find the correspondent formula (here one also uses that only finitely many constants from $\mathcal{C}$ occur in $s_0$). Thus we only need to deal with the case 
\[
\psi = \bigvee \bp{\phi[c_{\sigma_\Phi(0)}, \ldots, c_{\sigma_\Phi(n_\Phi-1)}]: \phi \in \Phi}
\]
for some $\Phi \in \mathcal{F}$ and $\sigma_\Phi:\omega\to\mathcal{C}$. We need to find some $\phi \in \Phi$ such that $s \cup \{\phi\} \in s$. Denote $d_0,\ldots,d_m \in \mathcal{C}$ the constants in $s_0$ from $\mathcal{C}$ that are not $c_{\sigma(0)}, \ldots, c_{\sigma(n_\Phi-1)}$ and write $s_0$ as
\[
s_0[c_{\sigma_\Phi(0)}, \ldots, c_{\sigma_\Phi(n_\Phi-1)},d_0,\ldots,d_m]
\]
with its constant symbols displayed.
Since 
\[
\mathcal{M} \vDash T \cup s_0,
\]
we have
\[
\mathcal{M} \vDash \exists v_0 \ldots v_{n_\Phi-1} \exists w_0 \ldots w_m \bigwedge s_0[v_0,\ldots,v_{n_\Phi-1},w_0,\ldots,w_m].
\]
By the Theorem assumptions, since 
\[
\exists v_0 \ldots v_{n_\Phi-1} \exists w_0 \ldots w_m \bigwedge s_0[v_0,\ldots,v_{n_\Phi-1},w_0,\ldots,w_m]
\]
is an $\mathrm{L}_{T,\mathcal{F}}$-formula, we get that for some $\phi \in \Phi$,
\[
T \cup \bp{\exists v_0 \ldots v_{n_\Phi-1} \exists w_0 \ldots w_m \bigwedge s_0[v_0,\ldots,v_{n_\Phi-1},w_0,\ldots,w_m] \wedge \phi[v_0,\ldots,v_{n_\Phi-1}] }
\]
has an $\mathrm{L}_{T,\mathcal{F}}$-model $\mathcal{N}$, which again by completeness can be supposed to be mixing. Make $\mathcal{N}$ an $\mathrm{L}(\mathcal{C})_{T,\mathcal{F}}$-structure by choosing an interpretation of the constants from $\mathcal{C}$ such that $c_{\sigma_\Phi(0)}, \ldots, c_{\sigma_\Phi(n_\Phi-1)}$ are  assigned to $v_0,\ldots,v_{n_\Phi-1}$ and $d_0,\ldots,d_m$ are assigned to $w_0,\ldots,w_m$. Then 
\[
s_0 \cup \{\phi\} \cup \bp{\bigvee \bp{\phi[c_{\sigma_\Phi(0)}, \ldots, c_{\sigma_\Phi(n_\Phi-1)}]: \phi \in \Phi}: \, \Phi \in \mathcal{F}_0} \in S.
\]
This concludes the proof that $S$ is a consistency property.

It is now straightforward to check that for all $\Phi\in\mathcal{F}$ and $\sigma:\omega\to\mathcal{C}$
\[
D_{\Phi,\sigma}=\bp{s\in S: \bigvee\bp{\phi[c_{\sigma(0)}, \ldots, c_{\sigma(n_\Phi-1)}]:\phi\in \Phi}\in s}
\]
is dense in $\mathbb{P}_S$ and that for all $\phi\in T$ so is $D_\phi=\bp{s\in S:\phi\in S}$.

By the Model Existence Theorem there is a model $\mathcal{M}$ of 
\[
T \cup \bp{\bigvee \bp{\phi[c_{\sigma(0)}, \ldots, c_{\sigma(n_\Phi-1)}]: \phi \in \Phi}: \, \Phi \in \mathcal{F}, \sigma:\omega\to \mathcal{C}}
\]
in which all the elements are the interpretation of some constant from $\mathcal{C}$. Thus $\mathcal{M}$  models the theory 
\[
T \cup \bp{\bigwedge_{\Phi \in \mathcal{F}} \forall v_0 \ldots v_{n_\Phi-1} \bigvee \Phi(v_0,\ldots,v_{n_\Phi-1}},
\]
as required.

\end{proof}

\subsection{Proof of Thm. \ref{thm:craigint2}}

\begin{proof}
The proof is a small twist of the proof of Thm. \ref{thm:craigint} with two differences. First, when obtaining a Boolean valued model for an element of the consistency property one needs to apply Thm. \ref{ManModExi} instead of \ref{GenFilThe}. Secondly, when proving in the proof of Thm. \ref{thm:craigint} that $S$ is a consistency property, the existential case strongly uses the Boolean valued models being full, thus also this part of that proof needs a revision.

\begin{itemize}
\item[(Ind.5)] Suppose $\exists \vec{v} \varphi(\vec{v}) \in s_1 \subseteq s$ and consider $\vec{c} \in \mathcal{C}^{\vec{v}}$ a sequence of constants not appearing in $s$, which exists by the clause on the number of constants from $\mathcal{C}$ in sentences\footnote{Note that $\vec{v}$ can be an infinite string of variables of length less than $\lambda$, nonetheless in $\varphi(\vec{v})$ only finitely many constants from $\mathcal{C}$ appears in it as well as in any other formula of $s$.} in $X_\phi$. Let us check $s \cup \{\varphi(\vec{c})\} \in S$. For this take $\theta,\sigma \in X_\phi \cap X_\psi$ such that $\vDash_{\mathrm{BVM}} \bigwedge s_1 \cup \{\varphi(\vec{c})\} \rao \theta$ and $\vDash_{\mathrm{BVM}} \bigwedge s_2 \rao \sigma$. We must show that $\theta \wedge \sigma$ is Boolean consistent. It is enough to prove $\vDash_{\mathrm{BVM}} s_1 \rao \theta$. Consider $\mathcal{M}$ a Boolean valued model such that $\mathcal{M} \vDash s_1$. Since $\exists \vec{v} \varphi(\vec{v}) \in s_1$, $\Qp{\exists \vec{v} \varphi(\vec{v})}_\mathsf{B}^\mathcal{M} = 1_\mathsf{B}$; therefore we can find a maximal antichain $A \subset \mathsf{B}$ and a family $\{\vec{\tau}_a : a \in A\} \subset M$ such that 
\[\Qp{\varphi(\vec{\tau}_a)}_\mathsf{B} \geq a
\]
and
\[
\Qp{\exists \vec{v} \varphi(\vec{v})}_\mathsf{B}^\mathcal{M} = \bigvee_{a \in A} \Qp{\varphi(\vec{\tau}_a)}_\mathsf{B}^\mathcal{M} = 1_\mathsf{B}. 
\]
If we are able to check $\Qp{\theta}_\mathsf{B}^\mathcal{M} \geq a$ for any $a \in A$, we will conclude since 
\[
\Qp{\theta}_\mathsf{B}^\mathcal{M} \geq \bigvee_{a \in A} a =  1_\mathsf{B}.
\]
Consider $a \in A$ and the structure $\mathcal{M}$ together with the assignment $\vec{c} \mapsto \vec{\tau}_a$. 
Consider also the Boolean algebra $\mathsf{B} \upharpoonleft a$. For any $m_1,\ldots,m_n\in\mathcal{M}$ and any $n$-ary relational symbol $R$ of the relational $\omega$-signature $\mathrm{L}$ define
\[
\Qp{R(m_1,\ldots,m_n)}_{\mathsf{B} \upharpoonleft a}^{(\mathcal{M},\vec{c} \ \mapsto \ \vec{\tau}_a)} = a \wedge \Qp{R(m_1,\ldots,m_n)}_\mathsf{B}^\mathcal{M}.
\]
Then one makes $\mathcal{M}$ a $\mathsf{B} \upharpoonleft a$-Boolean valued model for $\mathrm{L}\cup \{\vec{c}\}$ letting 
\[
\Qp{\vartheta[\vec{c} \ \mapsto \ \vec{\tau}_a]}_{\mathsf{B} \upharpoonleft a}^{\mathcal{M}} = a \wedge \Qp{\vartheta[\vec{c} \ \mapsto \ \vec{\tau}_a]}_\mathsf{B}^\mathcal{M}.
\]
In particular the $\mathsf{B} \upharpoonleft a$-value of $\bigwedge s_1 \cup \{\varphi(\vec{c})\}$ in $(\mathcal{M},\vec{c} \mapsto \vec{\tau}_a)$ is $1_{\mathsf{B} \upharpoonleft a} = a$.  Finally, the hypothesis $\vDash_{\mathrm{BVM}} \bigwedge s_1 \cup \{\varphi(\vec{c})\} \rightarrow \theta$ ensures $\Qp{\theta}_{\mathsf{B} \upharpoonleft a}^{(\mathcal{M},\vec{c} \ \mapsto \ \vec{\tau}_a)} = a$ and since 
\[
a = \Qp{\theta}_{\mathsf{B} \upharpoonleft a}^{(\mathcal{M},\vec{c} \ \mapsto \ \vec{\tau}_a)} = a \wedge \Qp{\theta}_\mathsf{B}^\mathcal{M},
\]
we conclude $a \leq \Qp{\theta}_\mathsf{B}^\mathcal{M}$. 
\end{itemize} 
\end{proof}


\section{Forcing notions as consistency properties} \label{sec:for=conprop}

By the results of Section \ref{ForConPro} a consistency property $S$ for $\mathrm{L}_{\kappa \omega}$ can be naturally seen as a forcing notion $\mathbb{P}_S$; then, using the forcing machinery on $\mathbb{P}_S$, we can produce a Boolean valued model with the mixing property of  $\bigwedge p$ for any $p\in S$. In this section we show that it is possible to go the other way round: we prove that any forcing notion $\mathbb{P}$ has a consistency property $S_\mathbb{P}$ associated to it, so that it is equivalent to force with $\mathbb{P}$ or with $\mathbb{P}_{S_\mathbb{P}}$. 

From now on we deal with forcing notions given both by partial orders or by complete Boolean algebras.

Given a complete Boolean algebra  $\mathsf{B}$, 
we show that for some regular $\kappa$ large enough in $V$, $\bool{B}$ is forcing equivalent to a consistency property describing the $\in$-theory of $H_\kappa$ as computed in a $V$-generic extension by 
$\bool{B}$.

\begin{notation} Let $\mathsf{B}$ be a complete Boolean algebra of cardinality $\kappa$.  
The signature $\mathrm{L}$ is $\bp{\in}$. 
$\mathrm{L}(\mathcal{C})_{\infty \omega}$ is produced by the set of constants $\mathcal{C} = V^\mathsf{B} \cap H_{\kappa^+}$. We use $\phi^{H_{\kappa^+}}$ to denote that all quantifiers from $\phi$ are restricted to $H_{\kappa^+}$.
We write $\Qp{\phi}_\bool{B}$ rather than $\Qp{\phi}^{V^{\bool{B}}}_\bool{B}$.
\end{notation} 	

\begin{theorem}\label{thm:equivforcconsprop}
For any complete Boolean algebra $\mathsf{B}$ of size less or equal than $\kappa$ and any regular cardinal $\lambda$ the following holds:
\begin{enumerate}[label=(\roman*)]
\item $S_\mathsf{B} = \{s \in [\mathrm{L}(\mathcal{C})_{\lambda \omega}]^{< \omega}: \Qp{(\bigwedge s)^{H_{\check{\kappa}^+}}}_\mathsf{B} > 0_\mathsf{B}\}$ is a consistency property,
\item the map 
\begin{align*}
 \pi_{\mathsf{B}} : (S_\mathsf{B}, \leq) &\rightarrow (\mathsf{B}^+,\leq_\mathsf{B}) \\ s &\mapsto \Qp{(\bigwedge s)^{H_{\check{\kappa}^+}}}_\mathsf{B} 
 \end{align*}
 is a dense embedding. 
In particular $\mathsf{B}$ and $S_{\mathsf{B}}$ are equivalent forcing notions.
\end{enumerate}
\end{theorem}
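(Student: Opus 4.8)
The plan is to realise $S_\mathsf{B}$ as the consistency property attached, via Example \ref{exm:BvalmodConsProp}, to a single canonical $\mathsf{B}$-valued model, and then to read the dense embedding off directly from the definition of $\pi_\mathsf{B}$. First I would assume, harmlessly for questions of forcing equivalence, that $\mathsf{B}\in H_{\kappa^+}$ (replace $\mathsf{B}$ by an isomorphic copy whose underlying set is an ordinal below $\kappa^+$); this guarantees that $\mathcal{C}=V^\mathsf{B}\cap H_{\kappa^+}$ contains the check-name $\check{b}$ of every $b\in\mathsf{B}$ as well as the canonical name $\dot{G}=\{\langle\check{b},b\rangle:b\in\mathsf{B}\}$ for the generic ultrafilter. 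Let $\mathcal{M}$ be the $\mathsf{B}$-valued $\{\in\}$-structure with domain $M=\mathcal{C}$, constants interpreted by themselves, and $\Qp{\sigma\in\tau}^{\mathcal{M}}_\mathsf{B}=\Qp{\sigma\in\tau}_\mathsf{B}$, $\Qp{\sigma=\tau}^{\mathcal{M}}_\mathsf{B}=\Qp{\sigma=\tau}_\mathsf{B}$ the usual forcing values; the equality and substitution axioms for $\mathcal{M}$ are the standard ones for $V^\mathsf{B}$.

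The conceptual core is the identity $\Qp{\phi}^{\mathcal{M}}_\mathsf{B}=\Qp{\phi^{H_{\kappa^+}}}_\mathsf{B}$ for every $\mathrm{L}(\mathcal{C})_{\lambda\omega}$-sentence $\phi$, which I would prove by induction on $\phi$. The atomic, negation and ($<\lambda$-sized) conjunction/disjunction clauses are immediate from Definition \ref{def:boolvalsem} and the completeness of $\mathsf{B}$. The only real work is the quantifier clause, where I must match $\bigvee_{\sigma\in M}\Qp{\phi(\sigma)^{H_{\kappa^+}}}_\mathsf{B}$ with $\Qp{(\exists\vec{v}\,\phi)^{H_{\kappa^+}}}_\mathsf{B}=\Qp{\exists\vec{v}\in H_{\check{\kappa}^+}\,\phi^{H_{\kappa^+}}}_\mathsf{B}$. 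The inequality $\leq$ is clear, since each $\sigma\in M$ is forced into $H_{\check{\kappa}^+}$. For $\geq$ I would use the maximum principle to pick a name $\dot{\tau}$ with $\Qp{\exists\vec{v}\in H_{\check{\kappa}^+}\,\phi}_\mathsf{B}=\Qp{\dot{\tau}\in H_{\check{\kappa}^+}\wedge\phi(\dot{\tau})}_\mathsf{B}$, then invoke Proposition \ref{StaHk2} (applied with $\kappa^+$ in place of $\kappa$; legitimate because $|\mathsf{B}|\leq\kappa$ makes $\mathsf{B}$ $<\kappa^+$-cc and $\kappa^+$ is regular) to replace $\dot{\tau}$ by some $\sigma\in V^\mathsf{B}\cap H_{\kappa^+}=M$ of the same value below $\Qp{\dot{\tau}\in H_{\check{\kappa}^+}}_\mathsf{B}$, and finally use substitution (Fact \ref{fac:pressubslambdaanyform}) to transfer the value of $\phi$ from $\dot{\tau}$ to $\sigma$. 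Granting this identity, $S_\mathsf{B}$ is exactly the consistency property of Example \ref{exm:BvalmodConsProp} for $\mathcal{M}$, which settles part (i); equivalently one checks the clauses of Definition \ref{def:ConProInf} one by one, the existential clause (Ind.5) being precisely the $\geq$ step just described and all others reducing to elementary monotonicity of $\wedge,\vee,\neg$ on $\mathsf{B}$.

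For part (ii) I would verify the three requirements of a dense embedding. Order is easy: $s\leq t$ means $t\subseteq s$, so $\bigwedge s$ has more conjuncts than $\bigwedge t$ and hence $\pi_\mathsf{B}(s)\leq\pi_\mathsf{B}(t)$. For incompatibility, since $\pi_\mathsf{B}(s\cup t)=\pi_\mathsf{B}(s)\wedge\pi_\mathsf{B}(t)$ and, because $S_\mathsf{B}$ is closed under subsets, two conditions are compatible in $S_\mathsf{B}$ iff $s\cup t\in S_\mathsf{B}$, we get that $s,t$ are compatible iff $\pi_\mathsf{B}(s)\wedge\pi_\mathsf{B}(t)>0_\mathsf{B}$, so $\pi_\mathsf{B}$ both preserves and reflects incompatibility. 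Density I would obtain in the strong form of surjectivity onto $\mathsf{B}^+$: for $b\in\mathsf{B}^+$ the singleton $s_b=\{\check{b}\in\dot{G}\}$ lies in $S_\mathsf{B}$, and, the formula being atomic (hence unaffected by relativisation), $\pi_\mathsf{B}(s_b)=\Qp{\check{b}\in\dot{G}}_\mathsf{B}=b$ by the defining property of the generic-filter name. These facts make $\pi_\mathsf{B}$ a dense embedding, whence $\RO(S_\mathsf{B})\cong\mathsf{B}$ (as $\mathsf{B}$ is already complete), i.e. $\mathsf{B}$ and $S_\mathsf{B}$ are equivalent forcing notions.

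The main obstacle is the quantifier case of the core identity: everything hinges on showing that forcing-existence of an element of $H_{\kappa^+}$ is witnessed, up to forced equality, by a name already in $M=V^\mathsf{B}\cap H_{\kappa^+}$. This is exactly where the size hypothesis $|\mathsf{B}|\leq\kappa$ (through the $<\kappa^+$-cc name-reflection of Proposition \ref{StaHk2}) and the maximum principle are indispensable; once that single reduction is in place, the remainder of both parts is routine bookkeeping.
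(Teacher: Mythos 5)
Your proposal is correct and follows essentially the same route as the paper: part (ii) is verified identically (order and incompatibility preservation plus surjectivity onto $\mathsf{B}^+$ via the atomic sentences $\check{b}\in\dot{G}$), and the heart of part (i) is the same name-reflection fact --- that every name forced into $H_{\check{\kappa}^+}$ is equivalent to one in $V^{\mathsf{B}}\cap H_{\kappa^+}$ --- which the paper invokes tersely as ``by choice of $\kappa$'' and which you justify explicitly via the maximum principle and Proposition \ref{StaHk2}. The only organizational difference is that you factor part (i) through the identity $\Qp{\phi}^{\mathcal{M}}_{\mathsf{B}}=\Qp{\phi^{H_{\kappa^+}}}_{\mathsf{B}}$ for the canonical $\mathsf{B}$-valued model with domain $\mathcal{C}$ and then cite Example \ref{exm:BvalmodConsProp}, whereas the paper checks the clauses of Definition \ref{def:ConProInf} one by one by direct Boolean-value computations (arguing by contradiction for (Ind.4) and (Ind.5)).
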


\begin{proof}

We first prove $(ii)$. 
\begin{itemize}
\item If $p \leq q$, then $q \subseteq p$ and $\pi(p) = \Qp{\bigwedge p}_\mathsf{B} \leq \Qp{\bigwedge q}_\mathsf{B} = \pi(q)$.
\item We have $p \perp q \Leftrightarrow p \cup q \notin S_\mathsf{B} \Leftrightarrow \Qp{\bigwedge (p \cup q)}_\mathsf{B} = 0_\mathsf{B} \Leftrightarrow \Qp{\bigwedge p}_\mathsf{B} \wedge \Qp{\bigwedge q}_\mathsf{B} = 0_\mathsf{B} \Leftrightarrow \pi(p) \perp \pi(q)$.
\item Let $\dot{G}=\bp{(\check{b},b):\,b\in\bool{B}}$ be the canonical $\mathsf{B}$-name for a $V$-generic filter. Since for any $b \in \mathsf{B}^+$ the $\mathsf{B}$-value of $\check{b} \in \dot{G}$ is $b$, the map $\pi$ is surjective and in particular $\pi[S_\mathsf{B}]$ is dense in $\mathsf{B}^+$.  
\end{itemize}

Now we prove $(i)$. We have to check that $S_\bool{B}$ satisfies the clauses of Def. \ref{def:ConProInf}.
Note that by choice of $\kappa$, 
\[
\Qp{\forall v\in H_{\check{\kappa}^+}\phi^{H_{\check{\kappa}^+}}(v)}^{V^{\bool{B}}}_{\bool{B}}=\bigwedge_{\tau\in \mathcal{C}}\Qp{\phi^{H_{\check{\kappa}^+}}(v)}^{V^{\bool{B}}}_{\bool{B}}.
\]
In view of the above observation, for notational simplicity we use $\Qp{\phi}_\mathsf{B}$ instead of $\Qp{\phi^{H_{\check{\kappa}^+}}}^{V^{\bool{B}}}_\mathsf{B}$. Note also that in the proof below 
we will only be interested in formulae where quantifiers range over 
(and constants belong to) $H_{\kappa^+}\cap V^\bool{B}$.

\begin{description}
\item[(Con)] Consider $s \in S_\mathsf{B}$ and $\phi \in \mathrm{L}(\mathcal{C})_{\infty\omega}$. If $\phi$ and $\neg \phi$ are both in $s$, $\Qp{\bigwedge s}_\mathsf{B} \leq \Qp{\phi \wedge \neg \phi}_\mathsf{B} = 0_\mathsf{B}$, a contradiction since $\Qp{\bigwedge s}_\mathsf{B} > 0_\mathsf{B}$. Then for any $\phi$, either $\phi \notin s$ or $\neg \phi \notin s$.
\item[(Ind.1)] Consider $s \in S_\mathsf{B}$ and $\neg \phi \in s$. Since $\Qp{\neg \phi}_\mathsf{B} = \Qp{\phi \neg}_\mathsf{B}$, $\Qp{\bigwedge (s \cup \{\phi \neg\})}_\mathsf{B} = \Qp{\bigwedge s}_\mathsf{B} > 0_\mathsf{B}$ and $s \cup \{\phi \neg\} \in S_\mathsf{B}$.
\item[(Ind.2)] Consider $s \in S_\mathsf{B}$ and $\bigwedge \Phi \in s$. For any $\phi \in \Phi$, $\Qp{\bigwedge (s \cup \{\phi\})}_\mathsf{B} = \Qp{\bigwedge s}_\mathsf{B} > 0_\mathsf{B}$ and $s \cup \{\phi\} \in S_\mathsf{B}$.
\item[(Ind.3)] Consider $s \in S_\mathsf{B}$, $\forall v \phi(v) \in s$ and $\tau \in \mathcal{C}$. We have 
\[
\Qp{\forall v \phi(v)}_\mathsf{B} = \bigwedge_{\sigma \in V^\mathsf{B} \cap H_{\kappa^+}} \Qp{\phi(\sigma)}_\mathsf{B} \leq \Qp{\phi(\tau)}_\mathsf{B}.
\] 
Therefore 
\[
\Qp{\bigwedge (s \cup \{\phi(\tau)\})}_\mathsf{B} = \Qp{\bigwedge s}_\mathsf{B} > 0_\mathsf{B},
\]
and $s \cup \{\phi(\tau)\} \in S_\mathsf{B}$.
\item[(Ind.4)] Consider $s \in S_\mathsf{B}$ and $\bigvee \Phi \in s$. Suppose that for no $\phi \in \Phi$, $s \cup \{\phi\} \in S_\mathsf{B}$. Then for any $\phi \in \Phi$, $\Qp{\bigwedge (s \cup \{\phi\})}_\mathsf{B} = \Qp{\bigwedge s}_\mathsf{B} \wedge \Qp{\phi}_\mathsf{B} = 0_\mathsf{B}$. Therefore $\Qp{\bigwedge s}_\mathsf{B} \leq \Qp{\neg \phi}_\mathsf{B}$ for any $\phi \in \Phi$. Since $\bigwedge_{\phi \in \Phi}\Qp{\neg \phi}_\mathsf{B}$ is the greatest lower bound  of $\{\Qp{\neg \phi}_\mathsf{B} : \phi \in \Phi\}$, we have $\Qp{\bigwedge s}_\mathsf{B} \leq \bigwedge_{\phi \in \Phi}\Qp{\neg \phi}_\mathsf{B} = \Qp{\neg \bigvee \Phi}_\mathsf{B}$. Then $\Qp{\bigwedge (s \cup \{\neg \bigvee \Phi\})}_\mathsf{B} = \Qp{\bigwedge s}_\mathsf{B} > 0_\mathsf{B}$, but since $\bigvee \Phi$ and $\neg \bigvee \Phi$ are both in $s \cup \{\neg \bigvee \Phi\}$, $\Qp{\bigwedge (s \cup \{\neg \bigvee \Phi\})}_\mathsf{B} = 0_\mathsf{B}$, a contradiction. 
\item[(Ind.5)] Consider $s \in S_\mathsf{B}$ and $\exists v \phi(v) \in s$. Suppose that for no $\tau\in\mathcal{C}$, $s \cup \{\phi(\tau)\} \in S_\mathsf{B}$. Then for any $\tau\in\mathcal{C}$, 
\[
\Qp{\bigwedge (s \cup \{\phi(\tau)\})}_\mathsf{B} = \Qp{\bigwedge s}_\mathsf{B} \wedge \Qp{\phi(\tau)}_\mathsf{B} = 0_\mathsf{B}.
\]
This gives that
\[ 
\Qp{\bigwedge s}_\mathsf{B} \leq \Qp{\neg \phi(\tau)}_\mathsf{B}
\]
for any $\tau \in V^\mathsf{B} \cap H_{\kappa^+}$. Therefore
\[
\Qp{\bigwedge s}_\bool{B} \leq \bigwedge_{\tau \in V^\mathsf{B} \cap H_{\kappa^+}} \Qp{\neg \phi(\tau)}_\mathsf{B} = \Qp{\forall v \neg \phi(v)}_\mathsf{B} = \Qp{\neg \exists v \phi(v)}_\mathsf{B}.
\]
Note that the equality 
\[
\bigwedge_{\tau \in V^\mathsf{B} \cap H_{\kappa^+}} \Qp{\neg \phi(\tau)}_\mathsf{B} = \Qp{\forall v \neg \phi(v)}_\mathsf{B}
\]
only holds because the quantifiers from $\phi$ are restricted to $H_{\check{\kappa}^+}$. Therefore 
\[
\Qp{\bigwedge (s \cup \{\neg \exists v \phi(v)\})}_\mathsf{B} = \Qp{\bigwedge s}_{\bool{B}} > 0_\mathsf{B}.
\]
But now $\exists v \phi(v)$ and $\neg \exists v \phi(v)$ are both in $s \cup \{\neg \exists v \phi(v)\}$, hence
\[
\Qp{\bigwedge (s \cup \{\neg \exists v \phi(v)\})}_\mathsf{B} = 0_\mathsf{B}.
\]
We reached a contradiction. 
\item[(Str.1)] Suppose $s \in S_\mathsf{B}$ and $\tau = \sigma \in s$; since $\mathsf{B}$-valued models for set theory verify $\Qp{\tau = \sigma}_\mathsf{B} = \Qp{\sigma = \tau}_\mathsf{B}$,  $\Qp{\bigwedge (s \cup \{\sigma = \tau\})}_\mathsf{B} = \Qp{\bigwedge s}_\mathsf{B} > 0_\mathsf{B}$ and $s \cup \{\sigma = \tau\} \in S_\mathsf{B}$.
\item[(Str.2)] Suppose $s \in S_\mathsf{B}$ and $\{\sigma = \tau, \phi(\tau)\} \subset s$. We have $\Qp{\sigma = \tau}_\mathsf{B} \wedge \Qp{\phi(\tau)}_\mathsf{B} \leq \Qp{\phi(\sigma)}_\mathsf{B}$; therefore $\Qp{\bigwedge (s \cup \{\phi(\sigma)\})}_\mathsf{B} > 0_\mathsf{B}$ and $s \cup \{\phi(\sigma)\} \in S_\mathsf{B}$.
\item[(Str.3)] Trivial since $\mathrm{L}=\bp{\in}$ has no constant symbol.
\end{description}

\end{proof}

Note that the only formulae one needs to keep in $S_\mathsf{B}$ in order to ensure that there is a dense embedding between both forcing notions are $\check{b} \in \dot{G}$. This is because in order to prove that the embedding has dense image, one only uses that the $\mathsf{B}$-value of $\check{b} \in \dot{G}$ is $b$. In particular one can consider various choices of constants $\mathcal{C}$ to produce the desired consistency property $S_\bool{B}$, other than the one we made.

\section{Appendix}\label{sec:app}

We collect here some results which are useful to clarify several concepts but not central.

\subsection{Separating Tarski satisfiability from boolean satisfiability}

In this section we show that being satisfiable in the ordinary sense (e.g. with respect to Tarski semantics)
is strictly stronger than being boolean satisfiable, which is also strictly stronger than being weakly boolean satisfiable.

We first show that there is a boolean satisfiable theory which has no Tarski model.
\begin{fact}\label{fac:tarskiinc}
Let $\mathrm{L}=\bp{F}\cup\bp{d_n:n\in\omega}\cup\bp{e_\alpha:\alpha<\omega_1^V}$ with $F$ a binary predicate.
Consider the $\mathrm{L}_{\omega_2^V\omega}$-theory $S$ given by 
\[
\forall x\,\exists! y\, F(x,y)
\]
\[
\exists y\, F(x,y)\leftrightarrow \bigvee_{n\in\omega}x=d_n
\]
\[
\bigvee_{n\in\omega} F(d_n,e_\alpha)
\]
for all  $\alpha<\omega_1^V$.

Then every countable fragment of $S$ has a Tarski model, while $S$ has no Tarski model.

Furthermore $S$ has a boolean valued model.
\end{fact}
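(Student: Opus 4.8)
The plan is to treat the three assertions separately, the real content lying in the production of the Boolean valued model, which I would obtain by collapsing $\omega_1^V$ and then invoking Boolean Completeness.

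\emph{No Tarski model (in $V$).} Suppose $\mathcal{M}\vDash S$. The first axiom makes the interpretation of $F$ the graph of a total function on $M$, and then the second axiom forces $\bigvee_{n}x=d_n$ to hold of \emph{every} $x$; hence $M=\{d_n^{\mathcal M}:n<\omega\}$ is countable. On the other hand the constants $e_\alpha$ ($\alpha<\omega_1^V$), which $S$ requires to be pairwise distinct, name $\omega_1^V$ distinct elements, so $|M|\geq\omega_1^V$. As $\omega_1^V$ is uncountable in $V$ this is a contradiction; it is exactly the obstruction realised by the sentence $\psi$ of the Remark in Section~\ref{sec:inflog}.

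\emph{Every countable fragment is Tarski satisfiable.} A countable fragment $S_0$ mentions only countably many of the constants $e_\alpha$, hence imposes only countably many instances of the third axiom and of the distinctness requirements. I would write down a model by hand: take $M=\omega$, interpret $d_n$ as $n$ and $F$ as the identity graph (so $F(x,y)$ holds iff $y=x$, a total function), place the countably many constrained constants $e_\alpha$ at pairwise distinct natural numbers, and interpret the remaining constants arbitrarily. The first two axioms hold since $\{d_n^{\mathcal M}:n<\omega\}=\omega=M$, and for each constrained $\alpha$ one has $\bigvee_n F(d_n,e_\alpha)\equiv\bigvee_n(e_\alpha=n)$, which is true. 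Thus $\mathcal M\vDash S_0$.

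\emph{Boolean valued model.} Let $G$ be $V$-generic for $\Coll(\omega,\omega_1^V)$. In $V[G]$ the ordinal $\omega_1^V$ is countable, so the \emph{same} set $S$ is now a countable $\mathrm{L}_{\omega_1\omega}$-theory, and moreover it \emph{does} have a Tarski model there: fixing in $V[G]$ a surjection $h:\omega\to\omega_1^V$ and setting $M=\omega_1^V$, $d_n=h(n)$, $F$ the identity graph, and $e_\alpha=\alpha$, one checks the axioms as above (now $\{d_n\}=\omega_1^V=M$ and $\bigvee_n F(d_n,e_\alpha)\equiv\bigvee_n(h(n)=\alpha)$ holds by surjectivity). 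By soundness of $\vdash$ for Tarski semantics, $S\not\vdash\emptyset$ holds in $V[G]$; since the relation $\vdash$ on $\mathrm{L}_{\infty\omega}$ is forcing invariant, $S\not\vdash\emptyset$ already holds in $V$. By Boolean Completeness (Theorem~\ref{them:boolcompl}) the failure of $S\vdash\emptyset$ produces a complete Boolean algebra $\bool{B}$ and a $\bool{B}$-valued model $\mathcal M$ with $\Qp{\bigwedge S}^{\mathcal M}_{\bool{B}}>0_{\bool{B}}$. Restricting to $\bool{B}\restriction\Qp{\bigwedge S}^{\mathcal M}_{\bool{B}}$ turns this value into $1$, so $S$ is Boolean satisfiable; routing the argument through the consistency property of Corollary~\ref{Boolean MET} (equivalently Theorem~\ref{thm:mainthmAF}) one even gets $\mathcal M$ with the mixing property. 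The delicate point is precisely this last paragraph: one must verify that the identical object $S$ is a \emph{countable, $\vdash$-consistent} theory in $V[G]$, use the forcing invariance of the calculus to pull $S\not\vdash\emptyset$ back to $V$, and then appeal to completeness (not to any absoluteness of Tarski satisfiability, which here genuinely fails) to manufacture the model, the final normalisation of the truth value from positive to $1$ being the same device used in Corollary~\ref{Boolean MET}.
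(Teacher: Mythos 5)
Your argument is correct in substance, and the first two parts run essentially parallel to the paper's (the paper realises a countable fragment on a countable ordinal $\beta$ via a surjection $f:\omega\to\beta$ rather than on $\omega$ via the identity, and its non-existence argument is phrased as ``a function with countable domain cannot have uncountable range'' rather than ``the universe is forced to be countable yet must contain $\omega_1^V$ distinct points'' --- the same pigeonhole either way). One shared caveat: you assert that $S$ ``requires'' the $e_\alpha$ to be pairwise distinct, but no such axiom is displayed in the statement of the Fact; without it the one-point structure is a Tarski model of $S$. The paper's own proof tacitly needs the same distinctness (its range must be uncountable), so this is a defect of the statement rather than of your proof, but it deserves to be made explicit.

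Where you genuinely diverge is the third claim. The paper constructs the Boolean valued model directly: it takes the $\RO(\Coll(\omega,\omega_1^V))$-valued structure with domain $\omega_1^V$, interprets $F(n,\alpha)$ by the condition $\Reg{\bp{q:\ap{n,\alpha}\in q}}$, and checks by hand that every axiom gets value $1$. You instead pass to $V[G]$, produce a Tarski model of $S$ there, infer $S\not\vdash\emptyset$ in $V[G]$ by soundness, pull this back to $V$ by the (upward) forcing invariance of $\vdash$, and then invoke the Boolean Completeness Theorem \ref{them:boolcompl} plus the $\bool{B}\restriction b$ normalisation of Corollary \ref{Boolean MET}. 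This is logically sound and not circular (Theorem \ref{them:boolcompl} nowhere depends on this Fact), and it has the bonus of yielding a model with the mixing property; but it leans on the heaviest machinery of the paper to certify an example whose purpose in the appendix is illustrative, and it loses the concreteness of the paper's model, which exhibits the generic collapse itself as the witness separating Tarski from Boolean satisfiability. Note also that the paper runs the completeness theorem in the \emph{opposite} direction right after the Fact, deducing $S\not\vdash\emptyset$ from the existence of the Boolean model; your route would make that remark vacuous. Finally, be aware that your reading of the first axiom as asserting totality of $F$ (so that the second axiom collapses the universe to $\bp{d_n^{\mathcal M}:n<\omega}$) is the literal one, but it is not the reading under which the paper's own models satisfy the axioms --- there $F$ is a partial function with domain $\bp{d_n:n\in\omega}$; your models happen to satisfy both readings, so nothing breaks, but the discrepancy is worth flagging.
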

Note that $S$ witnesses the failure of the compactness and completeness theorems for Tarski semantics for 
$\mathrm{L}_{\omega_2^V\omega}$; it is clearly a counterexample to compactness for this semantics; it is also a counterexample to completeness (using the axiom system we present in Section \ref{subsec:gentzencalc}) since $S\not\vdash\emptyset$ in view of
Thm. \ref{them:boolcompl}.
\begin{proof}
Given a countable fragment $R$ of $S$, find $\beta$ countable and such that any $e_\alpha$ occurring in some formula in $R$ has $\alpha<\beta$. Then $(\beta,f)$ where $f$ is a surjection of $\omega$ onto $\beta$ can be extended to a model of $R$ by mapping $d_n$ to $n$ and $e_\alpha$ to $\alpha$ for any $n\in\omega,\alpha<\beta$.

Note that the interpretation of $F$ in any Tarski model of $S$ in $V$ is a map with domain a countable set and range an uncountable set in $V$. Hence no such model can exist in $V$.

Now if $G$ is $\Coll(\omega,\omega_1)$-generic in $V[G]$ the generic function $\omega\to\omega_1^V$ given by $\cup G$ gives in $V[G]$ a Tarski model of 
$S$. Taking this into account, in $V$ consider the $\RO(\Coll(\omega,\omega_1^V))$-valued model 
$\mathcal{M}=(\omega_1^V,F^\mathcal{M},d_n^\mathcal{M}:n\in\omega,
e_\alpha^\mathcal{M}:\alpha<\omega_1^V)$ given by 
\begin{itemize}
\item
$R^\mathcal{M}(n,\alpha)=\Reg{\bp{q\in\Coll(\omega,\omega_1^V):\, \ap{n,\alpha}\in q}}$ for $n\in\omega$ and $\alpha<\omega_1^V$; $R^\mathcal{M}(\beta,\alpha)=0_{\Coll(\omega,\omega_1^V)}$ for $\beta\not\in\omega$ and $\alpha<\omega_1^V$;
\item
$d_n^\mathcal{M}=n$ for all $n\in\omega$,
\item
$e_\alpha^\mathcal{M}=\alpha$ for all $\alpha\in\omega_1^V$.
\end{itemize}
It can be checked that in $V$ it holds that 
$\mathcal{M}$ assigns value $1_ {\Coll(\omega,\omega_1^V)}$ to all axioms of $S$.
\end{proof}

Now we exhibit a theory $T$ which is a counterexample to the compactness theorem with respect to boolean satisfiability: all finite fragments of $T$ are boolean satisfiable while $T$ is not.

\begin{fact}
Consider the first order $\mathrm{L}_{\omega_1\omega}$-theory $T$ for $\mathrm{L}=\bp{d_n:n\in\omega,c_m:\,m\in\omega}$ with axioms:
\begin{itemize}
\item
$\bigwedge_{n\in\omega}\bigvee_{m\in \omega}d_n= c_m$,
\item
$\bigwedge_{n\neq m\in\omega}c_n\neq c_m$,
\item
$d_n\neq c_m$
for $n,m\in \omega$.
\end{itemize}
The following holds:
\begin{itemize}
\item
Every finite fragment of $T$ is Tarski satisfiable.
\item
$T$ is not boolean satisfiable.
\item
$T$ is weakly boolean satisfiable.
\end{itemize}
\end{fact}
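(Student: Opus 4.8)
The plan is to handle the three assertions separately, since they are of quite different difficulty. Note first that $T$ consists of two single (if infinitary) sentences---$\alpha_1:=\bigwedge_{n}\bigvee_{m}d_n=c_m$ and $\alpha_2:=\bigwedge_{n\neq m}c_n\neq c_m$---together with the family of atomic inequalities $d_n\neq c_m$. For the \emph{first} assertion, let $T_0\subseteq T$ be finite; then $T_0$ contains at most $\alpha_1,\alpha_2$ and \emph{finitely many} of the inequalities $d_n\neq c_m$. I would build a Tarski $\mathrm{L}$-structure on domain $\omega$ by setting $c_m:=m$ (so the $c_m$ are automatically distinct and $\alpha_2$ holds), and then, for each $n$, interpreting $d_n$ as $c_{m_n}$ for some index $m_n$ avoiding the finitely many pairs $(n,m)$ forbidden by the inequalities in $T_0$ (choosing $m_n=0$ when no pair involving $n$ occurs). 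Since only finitely many values of $m$ are excluded for each $n$, such a choice exists, and the resulting structure satisfies $\alpha_1$ and the finitely many inequalities of $T_0$ as well. This settles the first bullet.

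For the \emph{second} assertion I argue by contradiction. Suppose $\mathcal{M}$ is a $\bool{B}$-valued model with $\Qp{\phi}^{\mathcal{M}}_\bool{B}=1_\bool{B}$ for every $\phi\in T$. Applying the Boolean semantics for $\bigwedge$ and $\bigvee$ to $\alpha_1$, the fact that an infimum of terms below $1_\bool{B}$ equals $1_\bool{B}$ forces $\bigvee_{m\in\omega}\Qp{d_n=c_m}^{\mathcal{M}}_\bool{B}=1_\bool{B}$ for each $n$. On the other hand, each inequality $d_n\neq c_m$ having value $1_\bool{B}$ gives $\Qp{d_n=c_m}^{\mathcal{M}}_\bool{B}=0_\bool{B}$ for all $n,m$, whence $\bigvee_{m}\Qp{d_n=c_m}^{\mathcal{M}}_\bool{B}=0_\bool{B}$, a contradiction.

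The \emph{third} assertion is the only one requiring a genuine construction, and I expect it to be the main point. The tension is that $\alpha_1$ wants each $d_n$ to coincide with \emph{some} $c_m$, while the inequalities want $d_n$ to differ from \emph{each fixed} $c_m$; weak satisfiability lets me keep all these Boolean values merely positive (rather than $1_\bool{B}$) at once, by spreading the possible identifications across an antichain. Concretely I would take $\bool{B}=\pow{\omega}$ (complete, so every $\bool{B}$-valued model is automatically well behaved), use the antichain of singletons $a_m=\bp{m}$, and define a $\bool{B}$-valued model with domain $\bp{c_m:m\in\omega}\cup\bp{d}$, interpreting \emph{every} $d_n$ as the single element $d$, with $\Qp{c_m=c_{m'}}=1_\bool{B}$ when $m=m'$ and $0_\bool{B}$ otherwise, and $\Qp{d=c_m}=a_m$. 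With these values $\alpha_1$ gets value $\bigwedge_n\bigvee_m a_m=1_\bool{B}$, $\alpha_2$ gets value $1_\bool{B}$, and each inequality $d_n\neq c_m$ gets value $\neg a_m=\omega\setminus\bp{m}>0_\bool{B}$; hence all axioms are positive, as required.

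The only thing to watch in the last construction is that the chosen equality assignment really defines a $\bool{B}$-valued model, i.e.\ that it satisfies condition (A). Reflexivity and symmetry are immediate, and the transitivity inequality through a $c_m$ reduces to $a_m\wedge a_{m'}\leq\Qp{c_m=c_{m'}}$, which holds since the $a_m$ are pairwise disjoint (so the left side is $0_\bool{B}$ whenever $m\neq m'$). Because the signature contains no relation symbols beyond equality, condition (\ref{eqn:subslambda}) is vacuous, so this verification is the routine closing step.
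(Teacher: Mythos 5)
Your proof is correct, and the first two bullets match the paper's argument (the paper is just terser: it notes that $(\omega,c_n\mapsto n)$ extends to a model of any finite fragment, and that $\Qp{d_n\neq c_m}=1_{\bool{B}}$ for all $m$ is incompatible with $\Qp{\bigvee_m d_n=c_m}=1_{\bool{B}}$). Where you genuinely diverge is the third bullet. The paper realizes weak Boolean satisfiability via forcing: it takes $\bool{B}=\RO(\omega^{<\omega})$ (Cohen forcing), domain the $\bool{B}$-names for elements of $\check\omega$, and interprets $d_n$ as $\dot r(\check n)$ and $c_m$ as $\check m$, so that $\Qp{d_n=c_m}$ is the (positive, non-full) Boolean value of ``the generic real sends $n$ to $m$''. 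You instead build a bare-hands model over $\bool{B}=\pow{\omega}$ with domain $\bp{c_m:m\in\omega}\cup\bp{d}$ and $\Qp{d=c_m}=\bp{m}$; the underlying idea is the same (spread the identifications $d_n=c_m$ over a maximal antichain so that each disjunct is positive, the disjunction is $1_{\bool{B}}$, and each negation is positive), but your version needs no forcing machinery and makes the verification of condition (A) completely explicit, at the cost of being less suggestive of the general phenomenon the paper is illustrating (Boolean valued models arising from names in forcing extensions). One small slip: the nontrivial transitivity instance you check, $a_m\wedge a_{m'}\leq\Qp{c_m=c_{m'}}$, is the one where the \emph{middle} term is $d$ (i.e.\ $\Qp{c_m=d}\wedge\Qp{d=c_{m'}}\leq\Qp{c_m=c_{m'}}$), not ``through a $c_m$''; the inequality itself and its justification via disjointness of the $a_m$ are right, and the remaining instances are trivial, so this does not affect the argument.
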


\begin{proof}
Let:
\begin{itemize}
\item
$\bool{B}$ be the boolean completion of the Cohen forcing $\omega^{<\omega}$,
\item
$M=\bp{\sigma\in V^{\bool{B}}: \Qp{\sigma\in\check{\omega}}=1_{\bool{B}}}$,
\item
$\dot{r}$ be the canonical $\bool{B}$-name for the Cohen generic real.
\item
$\mathcal{M}$ be the $\bool{B}$-model for $\mathrm{L}$ with domain $M$,
$\Qp{\cdot=\cdot}^{\mathcal{M}}= \Qp{\cdot=\cdot}^{V^{\bool{B}}}$,
and interpretation of $d_n$ by $\dot{r}(\check{n})$ and $c_m$ by $\check{m}$.
\end{itemize}
Then $\mathcal{M}$ witnesses that $T$ is weakly boolean satisfiable.

$T$ cannot be boolean satisfiable because in any boolean valued model it cannot be that
$d_n\neq c_m$ gets boolean value $1_\bool{B}$ for all $m$ while
also $\bigvee_{m\in \omega}d_n= c_m$ gets the same value.

$(\omega,c_n\mapsto n:n\in\omega)$ can be extended to a Tarski model of any finite fragment of $T$.
\end{proof}
Our last example is a \emph{finite} weakly boolean satisfiable theory which is not boolean
satisfiable.

\begin{fact}
Consider the finite $\mathrm{L}_{\omega\omega}$-theory $T$ for 
$\mathrm{L}=\bp{d,c_0,c_1}$ with axioms:
\begin{itemize}
\item
$\bigvee_{m\in 2}d= c_m$,
\item
$c_0\neq c_1$,
\item
$d\neq c_m$
for $m\in 2$.
\end{itemize}
The following holds:
\begin{itemize}
\item
$T$ is not boolean satisfiable.
\item
$T$ is weakly boolean satisfiable.
\end{itemize}
\end{fact}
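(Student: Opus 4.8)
The plan is to treat the two assertions separately; both are elementary, the first by a one-line contradiction and the second by exhibiting an explicit finite Boolean valued model.

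For the failure of Boolean satisfiability I would argue by contradiction. Suppose $\mathcal{M}$ is a $\bool{B}$-valued model over a complete Boolean algebra $\bool{B}$ assigning value $1_\bool{B}$ to every axiom of $T$. From the two instances of the third axiom, $\Qp{d\neq c_m}^{\mathcal{M}}_\bool{B}=\neg\Qp{d= c_m}^{\mathcal{M}}_\bool{B}=1_\bool{B}$, so that $\Qp{d= c_0}^{\mathcal{M}}_\bool{B}=\Qp{d= c_1}^{\mathcal{M}}_\bool{B}=0_\bool{B}$. On the other hand the first axiom forces $\Qp{d= c_0}^{\mathcal{M}}_\bool{B}\vee\Qp{d= c_1}^{\mathcal{M}}_\bool{B}=1_\bool{B}$. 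These are incompatible, since $0_\bool{B}\vee 0_\bool{B}=0_\bool{B}\neq 1_\bool{B}$, which is the desired contradiction. Hence $T$ is not Boolean satisfiable.

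For weak Boolean satisfiability I would take $\bool{B}=\{0_\bool{B},a,b,1_\bool{B}\}$, the four-element (hence complete) Boolean algebra whose nontrivial elements $a,b$ are complementary atoms, so that $a\wedge b=0_\bool{B}$, $a\vee b=1_\bool{B}$ and $\neg a=b$. I would let the domain be $M=\bp{m_d,m_0,m_1}$ with $d^{\mathcal{M}}=m_d$, $c_0^{\mathcal{M}}=m_0$, $c_1^{\mathcal{M}}=m_1$, and define the Boolean value of equality by $\Qp{m_d=m_0}=a$, $\Qp{m_d=m_1}=b$, $\Qp{m_0=m_1}=0_\bool{B}$, together with the diagonal values $\Qp{m=m}=1_\bool{B}$ and their symmetric reflections. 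First I would check that this is a genuine $\bool{B}$-valued structure, i.e. that the equality clauses (A) hold: reflexivity and symmetry are built in, so only transitivity $\Qp{\tau=\sigma}\wedge\Qp{\sigma=\pi}\leq\Qp{\tau=\pi}$ needs inspection, and every mixed product occurring there contains either the factor $a\wedge b=0_\bool{B}$ or the factor $\Qp{m_0=m_1}=0_\bool{B}$, so all such infima vanish and the inequalities hold trivially. Since $\mathrm{L}$ has no relation symbols, condition \ref{eqn:subslambda} is vacuous. Computing the axioms then gives value $a\vee b=1_\bool{B}$ for the first, $\neg 0_\bool{B}=1_\bool{B}$ for the second, and $\neg a=b$ and $\neg b=a$ for the two instances of the third; all four values are strictly positive, so $\mathcal{M}$ witnesses that $T$ is weakly Boolean satisfiable.

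There is no real obstacle here, as the construction is finite and explicit. The only point requiring a moment's care is the transitivity verification for the chosen equality values, and this is precisely handled by the disjointness $a\wedge b=0_\bool{B}$ of the two atoms: it is exactly this disjointness that lets $d= c_0$ and $d= c_1$ each hold on a positive piece of $\bool{B}$ without ever holding simultaneously, while their disjunction is nonetheless forced everywhere, thereby separating weak satisfiability from satisfiability.
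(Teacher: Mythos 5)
Your proposal is correct and follows essentially the same route as the paper: the non-satisfiability is the same one-line clash between $\Qp{d=c_0}\vee\Qp{d=c_1}=1_\bool{B}$ and both disjuncts being forced to $0_\bool{B}$, and your weak-satisfiability witness over the four-element algebra is the same model as the paper's (the paper presents the domain as the functions $f:\bp{a,\neg a}\to 2$, which makes your explicit transitivity check automatic, but identifying $m_0,m_1,m_d$ with three of those functions recovers exactly your equality values). The only cosmetic difference is that your contradiction does not even need the axiom $c_0\neq c_1$, whereas the paper phrases it via $\Qp{d\neq c_0}=\neg\Qp{d=c_1}$.
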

\begin{proof}
Let $\bool{B}=\bp{0,a,\neg a,1}$ be the four elements boolean algebra, let 
$\mathcal{M}$ consists of the four possible functions $f:\bp{a,\neg a}\to 2$.
Let $c_i$ be interpreted by the constant functions with value $i$ and $d$ by one of the other two.
Set $\Qp{f=g}^{\mathcal{M}}_\bool{B}=\bigvee \{ b\in\bp{a,\neg a}: f(b)=g(b) \} $.
Then
\[
\Qp{\bigvee_{m\in 2}d= c_m}^{\mathcal{M}}_\bool{B}=
\Qp{c_0\neq c_1}^{\mathcal{M}}_\bool{B}=1_\bool{B}
\] 
and $\Qp{d\neq c_i}^{\mathcal{M}}_\bool{B}>0_\bool{B}$ for both $i=0,1$.
Hence $T$ is weakly boolean satisfiable.

$T$ cannot be boolean satisfiable since 
\[
\Qp{d\neq c_0}^{\mathcal{N}}_{\bool{C}}=
\neg\Qp{d=c_1}^{\mathcal{N}}_{\bool{C}}
\] 
in all $\bool{C}$-valued models $\mathcal{N}$ of 
$c_0\neq c_1\wedge \bigvee_{m\in 2}d= c_m$. Hence we cannot have that
 \[
 \Qp{d\neq c_0}^{\mathcal{N}}_{\bool{C}}= \Qp{d_n\neq c_1}^{\mathcal{N}}_{\bool{C}}=1_{\bool{C}}
 \] 
 in any boolean valued model model of the other axioms of $T$.
\end{proof}

We conclude this part noting that boolean satisfiability is the correct generalization to $\mathrm{L}_{\infty\omega}$ of Tarski satisfiability:
\begin{fact}
Assume $T$ is a first order theory. Then $T$ is boolean
satisfiable if and only if $T$ is Tarski satisfiable.
\end{fact}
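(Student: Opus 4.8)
The plan is to prove the two implications separately, confining the nontrivial content to the direction from Boolean to Tarski satisfiability.

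For the easy direction, suppose $\mathcal{A}$ is a Tarski model of $T$. I would regard $\mathcal{A}$ as a $\2$-valued model, where $\2=\bp{0_\2,1_\2}$ is the two-element (complete) Boolean algebra, by declaring $\Qp{R(\bar a)}^{\mathcal{A}}_\2=1_\2$ exactly when $\mathcal{A}\vDash R(\bar a)$ and similarly for equality. A routine induction on the first order formula $\phi$ shows $\Qp{\phi}^{\mathcal{A}}_\2=1_\2$ iff $\mathcal{A}\vDash\phi$; hence every $\phi\in T$ receives value $1_\2$ and $\mathcal{A}$ witnesses that $T$ is Boolean satisfiable.

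For the converse I would argue by contraposition: assume $T$ is \emph{not} Tarski satisfiable and deduce that it is not Boolean satisfiable. Since $T$ is first order, G\"odel's completeness theorem for first order logic yields that $T$ is syntactically inconsistent, i.e.\ there is a finitary first order derivation of the empty sequent $T\vdash\emptyset$. Every first order inference is an instance of the rules of the sequent calculus of Section \ref{subsec:gentzencalc} (finite conjunctions and single-variable quantifications being special cases of the infinitary rules), so this derivation is in particular a proof of $T\vdash\emptyset$ in our calculus. By soundness of that calculus with respect to Boolean valued semantics (the implication \ref{thm:boolcomp3}$\Rightarrow$\ref{thm:boolcomp2} of Thm. \ref{them:boolcompl}), we obtain $T\vDash_{\mathrm{BVM}}\emptyset$, that is $\Qp{\bigwedge T}^{\mathcal{M}}_\bool{B}\leq\Qp{\bigvee\emptyset}^{\mathcal{M}}_\bool{B}=0_\bool{B}$ for every complete Boolean algebra $\bool{B}$ and every $\bool{B}$-valued model $\mathcal{M}$. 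Thus no model assigns $T$ the value $1_\bool{B}$ (indeed, not even a positive value), so $T$ is not Boolean satisfiable.

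The only delicate point — and the step I expect to be the main obstacle — is the passage from ``not Tarski satisfiable'' to an inconsistency that the Boolean valued semantics can see. I resolve it by combining the classical first order completeness theorem with the observation that the paper's infinitary calculus conservatively contains the first order one, so a finitary refutation of $T$ transfers verbatim and soundness for Boolean valued semantics finishes the argument. An alternative, semantic route would take any ultrafilter $U\subseteq\bool{B}$ and form the quotient $\mathcal{M}/_U$, which by the \L o\'s theorem (Thm. \ref{thm:fullLos}) satisfies precisely the first order formulae whose value lies in $U$, hence all of $T$ once $\Qp{\phi}^{\mathcal{M}}_\bool{B}=1_\bool{B}\in U$; this version, however, requires $\mathcal{M}$ to be full for $\mathrm{L}_{\omega\omega}$, so one would first have to replace $\mathcal{M}$ by a full elementary extension, which is exactly where the extra work would lie.
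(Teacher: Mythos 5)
Your proof is correct, but your non-trivial direction goes by a genuinely different route than the paper's. The paper argues directly rather than by contraposition: given a Boolean valued model of $T$, it invokes the hard direction of the Boolean completeness theorem (Thm.~\ref{them:boolcompl}) to replace it by a $\bool{B}$-valued model of $T$ with the mixing property, then uses Proposition~\ref{prop:mixfull} (mixing implies full) and \L o\'s's theorem (Thm.~\ref{thm:fullLos}) to quotient by an arbitrary ultrafilter and land on a Tarski model. This is exactly the ``alternative semantic route'' you sketch in your last sentence; the ``extra work'' you anticipate there --- upgrading the given model to a full one --- is precisely what Thm.~\ref{them:boolcompl} supplies for free, since it gives completeness with respect to the class $\mathrm{Sh}$ of mixing models. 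Your contrapositive argument instead imports classical G\"odel completeness and uses only the soundness half of Thm.~\ref{them:boolcompl}; it is more economical in that it bypasses the consistency-property machinery behind the hard direction, at the price of the claim that a finitary refutation of $T$ transfers verbatim to the calculus of Section~\ref{subsec:gentzencalc}. That claim is routine but not entirely free: one must first rewrite $\vee,\exists,\rightarrow$ in terms of $\neg,\bigwedge,\forall$ (as the paper stipulates), and reflexivity of equality is not among the listed rules, so ``verbatim'' requires the same benefit of the doubt one must already extend to the paper's own completeness proof. Your easy direction (viewing a Tarski model as a $\2$-valued model) is the standard one; the paper does not even record it.
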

\begin{proof}
By Thm. \ref{them:boolcompl} any boolean satisfiable first order theory 
is realized in a $\bool{B}$-valued model
$\mathcal{M}$ with the mixing property. If $G$ is a ultrafilter on $\bool{B}$, $\mathcal{M}/_G$ models $T$ by Proposition \ref{prop:mixfull} and Thm. \ref{thm:fullLos}.
\end{proof}



\subsection{Proof of Fact \ref{fac:pressubslambdaanyform} and Proposition \ref{prop:mixfull}}\label{subsec:mixfull}

We first prove Fact \ref{fac:pressubslambdaanyform}.
\begin{proof}
We proceed by induction on the complexity of $\phi(x_i:i<\alpha)$. The Fact holds by definition for atomic formulae.
Assume the Fact for all proper subformulae of $\phi(x_i:i<\alpha)$.

Now note that the desired inequality entails that for all $\beta$ and 
$(\sigma_i:i<\beta)$, $(\tau_i:i<\beta)$ in
$\mathcal{M}^\beta$ and all $\psi(x_i:i<\beta)$ proper subformula of $\phi(x_i:i<\alpha)$,
\[
\bigg(\bigwedge_{i\in\beta}\Qp{\tau_i=\sigma_i}_\mathsf{B} \bigg) \wedge \Qp{\psi(\tau_i:\,i<\beta)}_\mathsf{B} = \bigg(\bigwedge_{i\in\beta}\Qp{\tau_i=\sigma_i}_\mathsf{B} \bigg) \wedge \Qp{\psi(\sigma_i:\,i<\beta)}_\mathsf{B}.
\]
 Now if $\phi=\neg\psi$ the above equality is extended to $\phi$. It is also preserved if $\phi=\bigvee \Phi$ since
 \begin{align*}
\bigg(\bigwedge_{i\in\alpha}\Qp{\tau_i=\sigma_i}_\mathsf{B} \bigg) \wedge \Qp{\bigvee\Phi(\tau_i:\,i<\alpha)}_\mathsf{B} = 
\\
=\bigg(\bigwedge_{i\in\alpha}\Qp{\tau_i=\sigma_i}_\mathsf{B} \bigg) \wedge \bigvee_{\psi\in \Phi}\Qp{\psi(\tau_i:\,i<\alpha)}_\mathsf{B} =
\\
=\bigvee_{\psi\in\Phi}\bigg(\bigwedge_{i\in\alpha}\Qp{\tau_i=\sigma_i}_\mathsf{B}  \wedge \Qp{\psi(\tau_i:\,i<\alpha)}_\mathsf{B} \bigg) =
\\
=\bigvee_{\psi\in\Phi}\bigg(\bigwedge_{i\in\alpha}\Qp{\tau_i=\sigma_i}_\mathsf{B}  \wedge \Qp{\psi(\sigma_i:\,i<\alpha)}_\mathsf{B} \bigg) =
\\
=\bigg(\bigwedge_{i\in\alpha}\Qp{\tau_i=\sigma_i}_\mathsf{B} \bigg) \wedge \bigvee_{\phi\in \Phi}\Qp{\psi(\sigma_i:\,i<\alpha)}_\mathsf{B} =
\\
=\bigg(\bigwedge_{i\in\alpha}\Qp{\tau_i=\sigma_i}_\mathsf{B} \bigg) \wedge \Qp{\bigvee\Phi(\sigma_i:\,i<\alpha)}_\mathsf{B}.
\end{align*}
Similarly
\begin{align*}
\bigg(\bigwedge_{i\in\alpha}\Qp{\tau_i=\sigma_i}_\mathsf{B} \bigg) \wedge \Qp{\exists (y_j:j\in\beta)\psi(\tau_i:\,i<\alpha,y_j:j\in\beta)}_\mathsf{B} = 
\\
=\bigg(\bigwedge_{i\in\alpha}\Qp{\tau_i=\sigma_i}_\mathsf{B} \bigg) \wedge \bigvee_{(\eta_j:j\in\beta)\in\mathcal{M}^\beta }\Qp{\psi(\tau_i:\,i<\alpha,\eta_j:j\in\beta)}_\mathsf{B} =
\\
=\bigvee_{(\eta_j:j\in\beta)\in\mathcal{M}^\beta }\bigg(\bigwedge_{i\in\alpha}\Qp{\tau_i=\sigma_i}_\mathsf{B}  \wedge \Qp{\psi(\tau_i:\,i<\alpha,\eta_j:j\in\beta)}_\mathsf{B} \bigg) =
\\
=\bigvee_{(\eta_j:j\in\beta)\in\mathcal{M}^\beta }\bigg(\bigwedge_{i\in\alpha}\Qp{\tau_i=\sigma_i}_\mathsf{B}  \wedge \Qp{\psi(\sigma_i:\,i<\alpha,\eta_j:j\in\beta)}_\mathsf{B} \bigg) =
\\
=\bigg(\bigwedge_{i\in\alpha}\Qp{\tau_i=\sigma_i}_\mathsf{B} \bigg) \wedge \bigvee_{(\eta_j:j\in\beta)\in\mathcal{M}^\beta }\Qp{\psi(\sigma_i:\,i<\alpha,\eta_j:j\in\beta)}_\mathsf{B} =
\\
=\bigg(\bigwedge_{i\in\alpha}\Qp{\tau_i=\sigma_i}_\mathsf{B} \bigg) \wedge \Qp{\exists (y_j:j\in\beta)\psi(\sigma_i:\,i<\alpha,y_j:j\in\beta)}_\mathsf{B}.
\end{align*}
The cases of $\bigwedge, \forall$ are handled similarly.
\end{proof}


We can now prove Proposition \ref{prop:mixfull}.
\begin{proof}
Let $\exists \overline{v} \phi(\overline{v})$ be a $\mathrm{L}_{\infty \infty}$-sentence. Fix a maximal antichain $A$ among 
\[
\bp{b \in \bool{B} : b \leq \Qp{\phi(\overline{c_b})} \text{ for some } \overline{c} \in M^{|\overline{v}|}}.
\]
For each $b \in A$ let $\overline{c_b} = (c_{i,b} : i \in I)$. The mixing property in $\mathcal{M}$ gives $c_i$ for each $i \in I$ such that $\Qp{c_i=c_{i,b}}_\mathsf{B} \geq b$ for all $b \in A$. Let $\overline{c} = (c_i : i \in I)$. Then 
\begin{gather*}
\Qp{\exists \overline{v} \phi(\overline{v})}= \bigvee A = \bigvee_{b \in A} \Qp{\phi(\overline{c_b})} =\bigvee_{b \in A} (b \wedge \Qp{\phi(\overline{c_b})} \wedge \bigwedge_{i\in I} \Qp{c_i=c_{i,b}}_\mathsf{B}) \leq \\
\bigvee_{b \in A} (b \wedge \Qp{\phi(\overline{c})}_\mathsf{B})= \Qp{\phi(\overline{c})}_\mathsf{B}.
\end{gather*}
\end{proof}

%
%

\section*{Concluding remarks}
Mansfield's completeness theorem follows from his proof that if $S$ is a consistency property for 
$\mathrm{L}_{\infty\infty}$, there is a $\RO(\mathbb{P}_S)$-valued model $\mathcal{M}_S$
such that $\Qp{\bigwedge s}=\Reg{\bp{s}}$ for any $s\in S$. However there is no reason to expect that the model $\mathcal{M}_S$ produced in Mansfield's proof is full.
We conjecture it is not, at least for some $S$.

We also conjecture that if $S$ is a consistency property for $\mathrm{L}_{\infty\infty}$, 
our model $\mathcal{A}_S$ may not satisfy $\Qp{\psi}=\Reg{\bp{r\in S:\psi\in r}}$ for some
formula $\psi$ of $\mathrm{L}_{\infty\infty}$.
The key point is that the $\mathrm{L}_{\infty\infty}$-semantics of existential quantifiers over infinite strings is not forcing invariant: if one forces the addition of a new countable sequence to some $\bool{B}$-valued model $\mathcal{M}$ in $V$, it may be the case that $\exists\vec{v}\psi$ gets Boolean value $0_\bool{B}$ in  $V$ and positive $\RO^{V[G]}(\bool{B})$-Boolean value in the generic extension $V[G]$. This makes our proof of Thm. \ref{GenFilThe}  break down when handling the existential quantifier clause for $\mathrm{L}_{\infty\infty}$ over an infinite string.

We dare the following:
\begin{conjecture}
Assume $\mathrm{L}$ is a relational $\omega$-signature.
There are Boolean satisfiable $\mathrm{L}_{\infty\infty}$-theories which do not have a Boolean valued 
model with the mixing property.
\end{conjecture}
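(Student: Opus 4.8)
The plan is to reduce the conjecture to a purely model-theoretic separation and then to construct a witnessing theory. Since any model with the mixing property is full for $\mathrm{L}_{\infty\infty}$ by Proposition \ref{prop:mixfull}, it suffices to produce an $\mathrm{L}_{\infty\infty}$-theory $T$ that is Boolean satisfiable but admits no \emph{full} $\bool{B}$-valued model. Boolean satisfiability will be arranged through Mansfield's construction: I would design a consistency property $S$ for $\mathrm{L}(\mathcal{C})_{\infty\infty}$ and fix $s_0\in S$, so that by Theorem \ref{ManModExi} (equivalently Theorem \ref{thm:complMansfield}) the sentence $\bigwedge s_0$ is realized with value $1_\bool{B}$ in the (generally non-full) model $\mathcal{M}_S$; taking $T=\bp{\bigwedge s_0}$ then makes $T$ Boolean satisfiable by construction. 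The entire content of the argument is thus to rule out fullness for every $\bool{B}$.

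The mechanism for defeating fullness is meant to exploit the non-forcing-invariance of infinite-string existentials emphasised in the concluding remarks. Concretely, $T$ will contain an axiom of the form $\exists(v_i:i<\lambda)\,\phi(\vec{v})$ with $\lambda>\omega$, together with axioms pinning down, coordinate by coordinate, where each $v_i$ must land. In any full model $\mathcal{M}\vDash T$, fullness yields a single witnessing string $\vec{n}\in M^{\lambda}$ with $\Qp{\phi(\vec{n})}_\bool{B}=1_\bool{B}$; reading off the partitions of unity $\bp{\Qp{n_i=c}_\bool{B}:c\in\mathcal{C}_i}$ attached to each coordinate, $\vec{n}$ codes a $\bool{B}$-name for an object (a cofinal branch, a surjection, and so on) whose existence in the forcing extension by $\bool{B}$ is designed to contradict a further $\bigwedge$- or $\forall$-axiom of $T$ asserting the \emph{absolute} non-existence of such an object.

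The most promising concrete candidate I would pursue encodes a special Aronszajn tree. Fix in $V$ such a tree $\mathcal{T}$ with specialising map $f\colon\mathcal{T}\to\omega$, and name all nodes and all values of $f$ by constants $c_t,\underline{m}$. The theory $T$ asserts, via $\bigwedge$, the full atomic diagram of $(\mathcal{T},<,f)$, that $f$ is injective on chains, and that there is a cofinal branch, $\exists(v_\alpha:\alpha<\omega_1)\,[\bigwedge_{\alpha<\beta}v_\alpha<v_\beta\wedge\bigwedge_\alpha\bigvee_{t\in\mathcal{T}_\alpha}v_\alpha=c_t]$. In a full model the branch witness $(n_\alpha)$ forces $(f(n_\alpha):\alpha<\omega_1)$ to be a value-$1$ injection of $\omega_1$ into $\omega$, i.e.\ $\bool{B}$ collapses $\omega_1^V$; one would then add a dual $\forall$-axiom over $\omega$-strings asserting that $\omega_1$ is \emph{not} enumerated by any countable sequence, in the hope that no single $\bool{B}$ can meet both demands while $\mathcal{M}_S$ still meets neither too soon.

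The hard part, and the reason this remains a conjecture, is precisely the interaction between these two demands, since the freedom to choose $\bool{B}$ cuts both ways. Exactly as in Remark \ref{Coll}, a collapsing algebra realises the $\omega_1$-into-$\omega$ configuration with no difficulty, so the branch axiom on its own has full, indeed mixing, models; yet any algebra that both adds the branch and preserves $\omega_1$ (for instance a ccc Suslin-tree forcing, were $\mathcal{T}$ taken Suslin rather than special) again yields a mixing model, defeating the naive contradiction. Thus the sought theory must force its infinite-string witness to require a cardinal collapse while simultaneously barring that collapse, and remain $\mathrm{BVM}$-consistent through Theorem \ref{ManModExi}; reconciling these constraints — equivalently, exhibiting an infinite-string existential of Boolean value $1$ whose fullness witness is incompatible with \emph{every} admissible $\bool{B}$ — is the genuine obstacle, and is exactly what would separate $\vDash_{\mathrm{Sh}}$ from $\vDash_{\mathrm{BVM}}$ on $\mathrm{L}_{\infty\infty}$, in contrast to the $\mathrm{L}_{\infty\omega}$ situation settled by Theorem \ref{them:boolcompl}.
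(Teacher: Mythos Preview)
The statement you are addressing is not a theorem in the paper but an open \emph{conjecture}, stated without proof in the concluding remarks. There is therefore no proof in the paper to compare your proposal against; the authors explicitly leave this question unresolved, and your task was to supply what they could not.

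Your proposal does not do so, and to your credit you say as much in the final paragraph. What you have written is a heuristic strategy together with a candid diagnosis of why it fails to close. The reduction to ruling out full models is correct (via Proposition~\ref{prop:mixfull}), and the instinct to exploit an $\exists(\vec{v})$ over an $\omega_1$-string whose witness would force a cardinal collapse is exactly the intuition the authors gesture at. But the concrete candidate you propose --- a special Aronszajn tree with a branch axiom plus an axiom forbidding an enumeration of $\omega_1$ by $\omega$ --- does not work, and your own analysis shows why: the ``no enumeration'' axiom is an $\mathrm{L}_{\infty\omega_1}$-sentence quantifying over $\omega$-strings, and any $\bool{B}$ that collapses $\omega_1^V$ will simultaneously validate the branch axiom and falsify the no-enumeration axiom, so the conjunction is not Boolean satisfiable to begin with; conversely, if you weaken the blocking axiom enough to keep Mansfield's model alive, you have not ruled out a collapsing $\bool{B}$ with a mixing model. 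You have correctly located the tension but not resolved it.

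In short: there is a genuine gap, namely the absence of any theory $T$ for which you can simultaneously verify (i) $T$ is Boolean satisfiable and (ii) no full $\bool{B}$-model of $T$ exists for any complete $\bool{B}$. Your write-up is an informed discussion of why the conjecture is plausible and hard, not a proof; it matches the paper's own state of knowledge rather than advancing beyond it.
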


Another point to be clarified on the completeness of Boolean valued semantics for $\mathrm{L}_{\infty\infty}$ is the following:

\begin{question}
Assume $\mathrm{L}$ is a relational $\lambda$-signature for $\lambda>\omega$. Does the completeness theorem for consistent
(according to the $\mathrm{L}_{\infty\infty}$-Gentzen's calculus)
 $\mathrm{L}_{\infty\infty}$-theories holds?
\end{question}
Mansfield's model existence theorem does not apply to such theories as the proof of (\ref{eqn:subslambda}) for the model obtained in Mansfield's proof breaks down for the obvious modification of the notion of consistency property required in order to deal with Boolean valued models for $\lambda$-signatures (e.g. one should replace clause (Str.2) of a consistency property with the stronger: \emph{``If $\bp{\bigwedge_{i\in I}c_i=d_i,\phi(c_i:i\in I)}\in s\in S$, then $\bp{\phi(d_i:i\in I)}\cup s\in S$''}.)  In the tentative proof of (\ref{eqn:subslambda}) for the model obtained in Mansfield's proof one should replace our argument with one requiring that a distributivity law for infinite conjunctions of infinite disjunctions holds.
The latter may not hold for $\bool{B}_S$.

Note finally that while Boolean compactness fails for $\mathrm{L}_{\infty\omega}$, one can prove the following curious form of weak Boolean  compactness:
\begin{fact}
Assume $T$ is a family of Boolean satisfiable $\mathrm{L}_{\infty\omega}$-sentences.
Then $T$ is weakly Boolean satisfiable.
\end{fact}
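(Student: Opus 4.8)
The plan is to realize all of $T$ simultaneously inside a single \emph{product} of the given witnessing models. By hypothesis, for each $\phi\in T$ we may fix a complete Boolean algebra $\bool{B}_\phi$ and a $\bool{B}_\phi$-valued model $\mathcal{M}_\phi$ with domain $M_\phi$ such that $\Qp{\phi}^{\mathcal{M}_\phi}_{\bool{B}_\phi}=1_{\bool{B}_\phi}$; we may clearly assume each $\bool{B}_\phi$ is non-degenerate, so that $1_{\bool{B}_\phi}\neq 0_{\bool{B}_\phi}$. First I would set $\bool{B}=\prod_{\phi\in T}\bool{B}_\phi$, which is again a complete Boolean algebra in which all the operations we care about (complementation and arbitrary infima and suprema) are computed coordinatewise, and let $M=\prod_{\phi\in T}M_\phi$. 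I turn $M$ into a $\bool{B}$-valued $\mathrm{L}$-structure $\mathcal{M}$ by interpreting everything coordinatewise: for tuples $\bar m=(m_\phi)_\phi$, $\bar n=(n_\phi)_\phi$ in $M$ I set $\Qp{\bar m=\bar n}^{\mathcal{M}}_{\bool{B}}=\bigl(\Qp{m_\phi=n_\phi}^{\mathcal{M}_\phi}_{\bool{B}_\phi}\bigr)_{\phi\in T}$, and I interpret each relation symbol in the same fashion. Conditions (A) and (B) in the definition of a $\bool{B}$-valued model then reduce to the corresponding conditions in each factor $\mathcal{M}_\phi$, because $\leq_{\bool{B}}$ and the meets appearing there are coordinatewise.

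The core of the argument is the commutation lemma: for every $\mathrm{L}_{\infty\omega}$-formula $\psi(\bar v)$ and every assignment $\bar v\mapsto\bar m$ from $M$,
\[
\Qp{\psi(\bar m)}^{\mathcal{M}}_{\bool{B}}=\Bigl(\Qp{\psi((\bar m)_\phi)}^{\mathcal{M}_\phi}_{\bool{B}_\phi}\Bigr)_{\phi\in T},
\]
where $(\bar m)_\phi$ denotes the tuple of $\phi$-th coordinates of the entries of $\bar m$. I would prove this by induction on $\psi$. The atomic case is the definition of $\mathcal{M}$; the cases of $\neg$, $\bigwedge\Phi$ and $\bigvee\Phi$ go through immediately, since complementation, infinitary infima and infinitary suprema in the product $\bool{B}$ are all taken coordinatewise.

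The step I expect to require the most care is the existential (equivalently universal) quantifier clause. Here $\Qp{\exists v\,\psi(v,\bar m)}^{\mathcal{M}}_{\bool{B}}=\bigvee_{a\in M}\Qp{\psi(a,\bar m)}^{\mathcal{M}}_{\bool{B}}$, and I must verify that this coordinatewise supremum has, in each coordinate $\phi_0$, the value $\bigvee_{b\in M_{\phi_0}}\Qp{\psi(b,(\bar m)_{\phi_0})}^{\mathcal{M}_{\phi_0}}_{\bool{B}_{\phi_0}}=\Qp{\exists v\,\psi}^{\mathcal{M}_{\phi_0}}_{\bool{B}_{\phi_0}}$. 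The inequality $\leq$ is immediate from the inductive hypothesis. For $\geq$ the point is that, given any witness $b\in M_{\phi_0}$, one can choose $a\in M=\prod_\phi M_\phi$ with $\phi_0$-coordinate equal to $b$ (the product is nonempty since each $M_\phi\neq\emptyset$, using choice), and the $\phi_0$-coordinate of $\Qp{\psi(a,\bar m)}^{\mathcal{M}}_{\bool{B}}$ is then $\Qp{\psi(b,(\bar m)_{\phi_0})}^{\mathcal{M}_{\phi_0}}_{\bool{B}_{\phi_0}}$; hence the coordinate-$\phi_0$ supremum dominates every such value. This is precisely where restricting to $\mathrm{L}_{\infty\omega}$ keeps the computation harmless: the witnesses for a quantifier may be varied independently in each factor, so the coordinatewise supremum sees each factor's full quantifier value.

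With the lemma established, the conclusion is immediate. For each $\phi\in T$, the $\phi$-coordinate of $\Qp{\phi}^{\mathcal{M}}_{\bool{B}}$ equals $\Qp{\phi}^{\mathcal{M}_\phi}_{\bool{B}_\phi}=1_{\bool{B}_\phi}\neq 0_{\bool{B}_\phi}$, so $\Qp{\phi}^{\mathcal{M}}_{\bool{B}}\neq 0_{\bool{B}}$, i.e. $\Qp{\phi}^{\mathcal{M}}_{\bool{B}}>0_{\bool{B}}$. Thus $\mathcal{M}$ simultaneously gives positive value to every sentence of $T$, witnessing that $T$ is weakly Boolean satisfiable. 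The only genuine obstacle is the bookkeeping in the quantifier step of the commutation lemma; the rest of the verification is purely formal.
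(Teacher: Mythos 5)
Your proof is correct, but it takes a genuinely different route from the paper's. The paper disposes of this Fact in one line using machinery it has already built: for each $\psi\in T$ it fixes a consistency property $S_\psi$ witnessing that $\psi$ is Boolean consistent (e.g.\ the one generated by a Boolean valued model of $\psi$ as in Example \ref{exm:BvalmodConsProp}), observes that $S=\bigcup_{\psi\in T}S_\psi$ is again a consistency property (every clause of Def.~\ref{def:ConProInf} is local to a single condition $s$, so the clauses survive unions), and then invokes Thm.~\ref{thm:mainthmAF} to obtain the single model $\mathcal{A}_S$ with $\Qp{\psi}^{\mathcal{A}_S}\geq\Reg{N_{\bp{\psi}}}>0$ for every $\psi\in T$. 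Your product construction avoids consistency properties entirely: the product of complete Boolean algebras is complete with coordinatewise operations, your commutation lemma is a routine induction, and the quantifier step is handled correctly by lifting a witness in one coordinate to an element of the (nonempty) product. Two remarks. First, your aside that the quantifier step is ``precisely where restricting to $\mathrm{L}_{\infty\omega}$ keeps the computation harmless'' is misleading: witnesses can be varied independently across factors for quantification over infinite strings just as well, so your argument proves the analogous statement for $\mathrm{L}_{\infty\infty}$ verbatim; the restriction to $\mathrm{L}_{\infty\omega}$ plays no role here. Second, what the paper's route buys that yours does not is a witnessing model with the mixing property, since $\mathcal{A}_S$ is mixing by Thm.~\ref{thm:mainthmAF}, whereas mixing does not obviously pass to products (the coordinate projections of an antichain in $\prod_\phi\bool{B}_\phi$ need not be antichains). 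For weak Boolean satisfiability this is irrelevant, so your more elementary, self-contained argument is perfectly adequate, and it even gives each $\phi$ full value $1_{\bool{B}_\phi}$ in its own coordinate.
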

This holds noticing that if $S_\psi$ is a consistency property that witnesses that $\psi$ is Boolean consistent, 
$S=\bigcup_{\psi\in T}S_\psi$ is a consistency property  such that $\mathcal{A}_S$ assigns a positive Boolean value to any $\psi\in T$.

\bibliographystyle{plain}
	\bibliography{Biblio}

\end{document}